\newcommand{\forestA}{
\tikz[planar forest ] {

\node [b] at (0.0, 0.0) {  } 
;
}}
\newcommand{\forestB}{
\tikz[planar forest ] {

\node [b] at (0.0, 0.0) {  } 
child {node [b] at (0.0, 1.0) {  }  
}
;
}}
\newcommand{\forestC}{
\tikz[planar forest ] {

\node [b] at (0.0, 0.0) {  } 
child {node [b] at (-0.5, 1.0) {  }  
}
child {node [b] at (0.5, 1.0) {  }  
}
;
}}
\newcommand{\forestD}{
\tikz[planar forest ] {

\node [b] at (0.0, 0.0) {  } 
child {node [b] at (0.0, 1.0) {  }  
child {node [b] at (0.0, 1.0) {  }  
}
}
;
}}
\newcommand{\forestE}{
\tikz[planar forest ] {

\node [b] at (0.0, 0.0) {  } 
child {node [b] at (-1.0, 1.0) {  }  
}
child {node [b] at (0.0, 1.0) {  }  
}
child {node [b] at (1.0, 1.0) {  }  
}
;
}}
\newcommand{\forestF}{
\tikz[planar forest ] {

\node [b] at (0.0, 0.0) {  } 
child {node [b] at (-0.5, 1.0) {  }  
child {node [b] at (0.0, 1.0) {  }  
}
}
child {node [b] at (0.5, 1.0) {  }  
}
;
}}
\newcommand{\forestG}{
\tikz[planar forest ] {

\node [b] at (0.0, 0.0) {  } 
child {node [b] at (-0.5, 1.0) {  }  
}
child {node [b] at (0.5, 1.0) {  }  
child {node [b] at (0.0, 1.0) {  }  
}
}
;
}}
\newcommand{\forestH}{
\tikz[planar forest ] {

\node [b] at (0.0, 0.0) {  } 
child {node [b] at (0.0, 1.0) {  }  
child {node [b] at (-0.5, 1.0) {  }  
}
child {node [b] at (0.5, 1.0) {  }  
}
}
;
}}
\newcommand{\forestI}{
\tikz[planar forest ] {

\node [b] at (0.0, 0.0) {  } 
child {node [b] at (0.0, 1.0) {  }  
child {node [b] at (0.0, 1.0) {  }  
child {node [b] at (0.0, 1.0) {  }  
}
}
}
;
}}
\newcommand{\forestJ}{
\tikz[planar forest ] {

\draw (-1.0,1.0) edge[-] (0.0,1.0);
\draw (0.0,1.0) edge[-] (1.0,1.0);
\draw (-1.0,1.0) arc (90:270:0.5);
\draw (1.0,1.0) arc (90:-90:0.5);
\draw (-1.0,0.0) edge[-] (1.0,0.0);
\node [draw=none] at (0.0, 0.0) { } 
child {node [b] at (-1.0, 1.0) {  } edge from parent[draw=none] 
child {node [b] at (0.0, 1.0) {  }  
}
}
child {node [b] at (0.0, 1.0) {  } edge from parent[draw=none] 
}
child {node [b] at (1.0, 1.0) {  } edge from parent[draw=none] 
child {node [b] at (-0.5, 1.0) {  }  
}
child {node [b] at (0.5, 1.0) {  }  
}
}
;
}}
\newcommand{\forestK}{
\tikz[planar forest ] {

\draw (-1.0,1.0) edge[-] (0.0,1.0);
\draw (0.0,1.0) edge[-] (1.5,1.0);
\draw (-1.0,1.0) arc (90:270:0.5);
\draw (1.5,1.0) arc (90:-90:0.5);
\draw (-1.0,0.0) edge[-] (1.5,0.0);
\node [draw=none] at (0.0, 0.0) { } 
child {node [b] at (-1.0, 1.0) {  } edge from parent[draw=none] 
}
child {node [b] at (0.0, 1.0) {  } edge from parent[draw=none] 
child {node [b] at (-0.5, 1.0) {  }  
}
child {node [b] at (0.5, 1.0) {  }  
}
}
child {node [b] at (1.5, 1.0) {  } edge from parent[draw=none] 
child {node [b] at (0.0, 1.0) {  }  
}
}
;
}}
\newcommand{\forestL}{
\tikz[planar forest ] {

\draw (-1.5,1.0) edge[-] (0.0,1.0);
\draw (0.0,1.0) edge[-] (1.0,1.0);
\draw (-1.5,1.0) arc (90:270:0.5);
\draw (1.0,1.0) arc (90:-90:0.5);
\draw (-1.5,0.0) edge[-] (1.0,0.0);
\node [draw=none] at (0.0, 0.0) { } 
child {node [b] at (-1.5, 1.0) {  } edge from parent[draw=none] 
child {node [b] at (-0.5, 1.0) {  }  
}
child {node [b] at (0.5, 1.0) {  }  
}
}
child {node [b] at (0.0, 1.0) {  } edge from parent[draw=none] 
child {node [b] at (0.0, 1.0) {  }  
}
}
child {node [b] at (1.0, 1.0) {  } edge from parent[draw=none] 
}
;
}}
\newcommand{\forestM}{
\tikz[planar forest ] {

\node [b] at (0.0, 0.0) {  } 
child {node [b] at (-0.5, 1.0) {  }  
}
child {node [b] at (0.5, 1.0) {  }  
}
;
}}
\newcommand{\forestN}{
\tikz[planar forest ] {

\node [b] at (0.0, 0.0) {  } 
child {node [b] at (0.0, 1.0) {  }  
}
;
}}
\newcommand{\forestO}{
\tikz[planar forest ] {

\node [b] at (0.0, 0.0) {  } 
child {node [b] at (-0.5, 1.0) {  }  
child {node [b] at (-0.5, 1.0) {  }  
}
child {node [b] at (0.5, 1.0) {  }  
}
}
child {node [b] at (0.5, 1.0) {  }  
}
;
}}
\newcommand{\forestP}{
\tikz[planar forest ] {

\node [b] at (0.0, 0.0) {  } 
child {node [b] at (0.0, 1.0) {  }  
child {node [b] at (0.0, 1.0) {  }  
child {node [b] at (-0.5, 1.0) {  }  
}
child {node [b] at (0.5, 1.0) {  }  
}
}
}
;
}}
\newcommand{\forestQ}{
\tikz[planar forest ] {

\node [b] at (0.0, 0.0) {  } 
child {node [b] at (0.0, 1.0) {  }  
}
;
}}
\newcommand{\forestR}{
\tikz[planar forest ] {

\draw (-0.5,1.0) edge[-] (0.5,1.0);
\draw (-0.5,1.0) arc (90:270:0.5);
\draw (0.5,1.0) arc (90:-90:0.5);
\draw (-0.5,0.0) edge[-] (0.5,0.0);
\node [draw=none] at (0.0, 0.0) { } 
child {node [b] at (-0.5, 1.0) {  } edge from parent[draw=none] 
}
child {node [b] at (0.5, 1.0) {  } edge from parent[draw=none] 
}
;
}}
\newcommand{\forestS}{
\tikz[planar forest ] {

\node [b] at (0.0, 0.0) {  } 
;
}}
\newcommand{\forestT}{
\tikz[planar forest ] {

\draw (-0.5,1.0) edge[-] (0.5,1.0);
\draw (-0.5,1.0) arc (90:270:0.5);
\draw (0.5,1.0) arc (90:-90:0.5);
\draw (-0.5,0.0) edge[-] (0.5,0.0);
\node [draw=none] at (0.0, 0.0) { } 
child {node [b] at (-0.5, 1.0) {  } edge from parent[draw=none] 
child {node [b] at (0.0, 1.0) {  }  
child {node [b] at (0.0, 1.0) {  }  
}
}
}
child {node [b] at (0.5, 1.0) {  } edge from parent[draw=none] 
}
;
}}
\newcommand{\forestU}{
\tikz[planar forest ] {

\node [b] at (0.0, 0.0) {  } 
;
}}
\newcommand{\forestV}{
\tikz[planar forest ] {

\draw (-0.5,1.0) edge[-] (0.5,1.0);
\draw (-0.5,1.0) arc (90:270:0.5);
\draw (0.5,1.0) arc (90:-90:0.5);
\draw (-0.5,0.0) edge[-] (0.5,0.0);
\node [draw=none] at (0.0, 0.0) { } 
child {node [b] at (-0.5, 1.0) {  } edge from parent[draw=none] 
}
child {node [b] at (0.5, 1.0) {  } edge from parent[draw=none] 
child {node [b] at (0.0, 1.0) {  }  
child {node [b] at (0.0, 1.0) {  }  
}
}
}
;
}}
\newcommand{\forestW}{
\tikz[planar forest ] {

\node [b] at (0.0, 0.0) {  } 
;
}}
\newcommand{\forestX}{
\tikz[planar forest ] {

\draw (-0.5,1.0) edge[-] (0.5,1.0);
\draw (-0.5,1.0) arc (90:270:0.5);
\draw (0.5,1.0) arc (90:-90:0.5);
\draw (-0.5,0.0) edge[-] (0.5,0.0);
\node [draw=none] at (0.0, 0.0) { } 
child {node [b] at (-0.5, 1.0) {  } edge from parent[draw=none] 
}
child {node [b] at (0.5, 1.0) {  } edge from parent[draw=none] 
}
;
}}
\newcommand{\forestY}{
\tikz[planar forest ] {

\node [b] at (0.0, 0.0) {  } 
child {node [b] at (0.0, 1.0) {  }  
child {node [b] at (0.0, 1.0) {  }  
}
}
;
}}
\newcommand{\forestAB}{
\tikz[planar forest ] {

\draw (-0.75,1.0) edge[-] (0.75,1.0);
\draw (-0.75,1.0) arc (90:270:0.5);
\draw (0.75,1.0) arc (90:-90:0.5);
\draw (-0.75,0.0) edge[-] (0.75,0.0);
\node [draw=none] at (0.0, 0.0) { } 
child {node [b] at (-0.75, 1.0) {  } edge from parent[draw=none] 
child {node [b] at (0.0, 1.0) {  }  
}
}
child {node [b] at (0.75, 1.0) {  } edge from parent[draw=none] 
child {node [b] at (-0.5, 1.0) {  }  
}
child {node [b] at (0.5, 1.0) {  }  
}
}
;
}}
\newcommand{\forestBB}{
\tikz[planar forest ] {

\node [b] at (0.0, 0.0) {  } 
;
}}
\newcommand{\forestCB}{
\tikz[planar forest ] {

\draw (0.0,1.0) arc (90:270:0.5);
\draw (0.0,1.0) arc (90:-90:0.5);
\node [draw=none] at (0.0, 0.0) { } 
child {node [b] at (0.0, 1.0) {  } edge from parent[draw=none] 
}
;
}}
\newcommand{\forestDB}{
\tikz[planar forest ] {

\node [b] at (0.0, 0.0) {  } 
child {node [b] at (0.0, 1.0) {  }  
}
;
}}
\newcommand{\forestEB}{
\tikz[planar forest ] {

\draw (-0.75,1.0) edge[-] (0.75,1.0);
\draw (-0.75,1.0) arc (90:270:0.5);
\draw (0.75,1.0) arc (90:-90:0.5);
\draw (-0.75,0.0) edge[-] (0.75,0.0);
\node [draw=none] at (0.0, 0.0) { } 
child {node [b] at (-0.75, 1.0) {  } edge from parent[draw=none] 
child {node [b] at (0.0, 1.0) {  }  
}
}
child {node [b] at (0.75, 1.0) {  } edge from parent[draw=none] 
child {node [b] at (-0.5, 1.0) {  }  
}
child {node [b] at (0.5, 1.0) {  }  
}
}
;
}}
\newcommand{\forestFB}{
\tikz[planar forest ] {

\draw (0.0,1.0) arc (90:270:0.5);
\draw (0.0,1.0) arc (90:-90:0.5);
\node [draw=none] at (0.0, 0.0) { } 
child {node [b] at (0.0, 1.0) {  } edge from parent[draw=none] 
child {node [b] at (0.0, 1.0) {  }  
}
}
;
}}
\newcommand{\forestGB}{
\tikz[planar forest ] {

\node [b] at (0.0, 0.0) {  } 
child {node [b] at (0.0, 1.0) {  }  
}
;
}}
\newcommand{\forestHB}{
\tikz[planar forest ] {

\draw (-0.75,1.0) edge[-] (0.75,1.0);
\draw (-0.75,1.0) arc (90:270:0.5);
\draw (0.75,1.0) arc (90:-90:0.5);
\draw (-0.75,0.0) edge[-] (0.75,0.0);
\node [draw=none] at (0.0, 0.0) { } 
child {node [b] at (-0.75, 1.0) {  } edge from parent[draw=none] 
child {node [b] at (0.0, 1.0) {  }  
}
}
child {node [b] at (0.75, 1.0) {  } edge from parent[draw=none] 
child {node [b] at (-0.5, 1.0) {  }  
}
child {node [b] at (0.5, 1.0) {  }  
}
}
;
}}
\newcommand{\forestIB}{
\tikz[planar forest ] {

\draw (0.0,1.0) arc (90:270:0.5);
\draw (0.0,1.0) arc (90:-90:0.5);
\node [draw=none] at (0.0, 0.0) { } 
child {node [b] at (0.0, 1.0) {  } edge from parent[draw=none] 
}
;
}}
\newcommand{\forestJB}{
\tikz[planar forest ] {

\node [b] at (0.0, 0.0) {  } 
child {node [b] at (-0.5, 1.0) {  }  
}
child {node [b] at (0.5, 1.0) {  }  
}
;
}}
\newcommand{\forestKB}{
\tikz[planar forest ] {

\draw (-0.75,1.0) edge[-] (0.75,1.0);
\draw (-0.75,1.0) arc (90:270:0.5);
\draw (0.75,1.0) arc (90:-90:0.5);
\draw (-0.75,0.0) edge[-] (0.75,0.0);
\node [draw=none] at (0.0, 0.0) { } 
child {node [b] at (-0.75, 1.0) {  } edge from parent[draw=none] 
child {node [b] at (0.0, 1.0) {  }  
}
}
child {node [b] at (0.75, 1.0) {  } edge from parent[draw=none] 
child {node [b] at (-0.5, 1.0) {  }  
}
child {node [b] at (0.5, 1.0) {  }  
}
}
;
}}
\newcommand{\forestLB}{
\tikz[planar forest ] {

\draw (0.0,1.0) arc (90:270:0.5);
\draw (0.0,1.0) arc (90:-90:0.5);
\node [draw=none] at (0.0, 0.0) { } 
child {node [b] at (0.0, 1.0) {  } edge from parent[draw=none] 
}
;
}}
\newcommand{\forestMB}{
\tikz[planar forest ] {

\node [b] at (0.0, 0.0) {  } 
child {node [b] at (0.0, 1.0) {  }  
child {node [b] at (0.0, 1.0) {  }  
}
}
;
}}
\newcommand{\forestNB}{
\tikz[planar forest ] {

\draw (-0.5,1.0) edge[-] (0.5,1.0);
\draw (-0.5,1.0) arc (90:270:0.5);
\draw (0.5,1.0) arc (90:-90:0.5);
\draw (-0.5,0.0) edge[-] (0.5,0.0);
\node [draw=none] at (0.0, 0.0) { } 
child {node [b] at (-0.5, 1.0) {  } edge from parent[draw=none] 
child {node [b] at (0.0, 1.0) {  }  
}
}
child {node [b] at (0.5, 1.0) {  } edge from parent[draw=none] 
}
;
}}
\newcommand{\forestOB}{
\tikz[planar forest ] {

\node [b] at (0.0, 0.0) {  } 
child {node [b] at (-0.5, 1.0) {  }  
}
child {node [b] at (0.5, 1.0) {  }  
}
;
}}
\newcommand{\forestPB}{
\tikz[planar forest ] {

\draw (-0.75,1.0) edge[-] (0.75,1.0);
\draw (-0.75,1.0) arc (90:270:0.5);
\draw (0.75,1.0) arc (90:-90:0.5);
\draw (-0.75,0.0) edge[-] (0.75,0.0);
\node [draw=none] at (0.0, 0.0) { } 
child {node [b] at (-0.75, 1.0) {  } edge from parent[draw=none] 
child {node [b,label={ [label distance=-1mm]0:{ \scriptsize v } }] at (-0.5, 1.0) {  }  
}
child {node [b] at (0.5, 1.0) {  }  
}
}
child {node [b] at (0.75, 1.0) {  } edge from parent[draw=none] 
child {node [b] at (0.0, 1.0) {  }  
}
}
;
}}
\newcommand{\forestQB}{
\tikz[planar forest ] {

\node [b] at (0.0, 0.0) {  } 
child {node [b] at (-0.5, 1.0) {  }  
child {node [b] at (0.0, 1.0) {  }  
}
}
child {node [b] at (0.5, 1.0) {  }  
}
;
}}
\newcommand{\forestRB}{
\tikz[planar forest ] {

\draw (-0.5,1.0) edge[-] (0.5,1.0);
\draw (-0.5,1.0) arc (90:270:0.5);
\draw (0.5,1.0) arc (90:-90:0.5);
\draw (-0.5,0.0) edge[-] (0.5,0.0);
\node [draw=none] at (0.0, 0.0) { } 
child {node [b] at (-0.5, 1.0) {  } edge from parent[draw=none] 
child {node [b] at (0.0, 1.0) {  }  
}
}
child {node [b] at (0.5, 1.0) {  } edge from parent[draw=none] 
}
;
}}
\newcommand{\forestSB}{
\tikz[planar forest ] {

\draw (-0.75,1.0) edge[-] (0.75,1.0);
\draw (-0.75,1.0) arc (90:270:0.5);
\draw (0.75,1.0) arc (90:-90:0.5);
\draw (-0.75,0.0) edge[-] (0.75,0.0);
\node [draw=none] at (0.0, 0.0) { } 
child {node [b] at (-0.75, 1.0) {  } edge from parent[draw=none] 
child {node [b] at (-0.5, 1.0) {  }  
child {node [b] at (0.0, 1.0) {  }  
child {node [b] at (-0.5, 1.0) {  }  
}
child {node [b] at (0.5, 1.0) {  }  
}
}
}
child {node [b] at (0.5, 1.0) {  }  
}
}
child {node [b] at (0.75, 1.0) {  } edge from parent[draw=none] 
child {node [b] at (0.0, 1.0) {  }  
}
}
;
}}
\newcommand{\forestTB}{
\tikz[planar forest ] {

\node [b] at (0.0, 0.0) {  } 
child {node [b] at (-0.5, 1.0) {  }  
child {node [b] at (0.0, 1.0) {  }  
}
}
child {node [b] at (0.5, 1.0) {  }  
}
;
}}
\newcommand{\forestUB}{
\tikz[planar forest ] {

\node [b,label={ [label distance=-1mm]0:{ \scriptsize a } }] at (0.0, 0.0) {  } 
child {node [b,label={ [label distance=-1mm]0:{ \scriptsize b } }] at (-0.5, 1.0) {  }  
child {node [b,label={ [label distance=-1mm]0:{ \scriptsize d } }] at (0.0, 1.0) {  }  
}
}
child {node [b,label={ [label distance=-1mm]0:{ \scriptsize c } }] at (0.5, 1.0) {  }  
}
;
}}
\newcommand{\forestVB}{
\tikz[planar forest ] {

\draw (-1.0,1.0) edge[-] (0.0,1.0);
\draw (0.0,1.0) edge[-] (1.0,1.0);
\draw (-1.0,1.0) arc (90:270:0.5);
\draw (1.0,1.0) arc (90:-90:0.5);
\draw (-1.0,0.0) edge[-] (1.0,0.0);
\node [draw=none] at (0.0, 0.0) { } 
child {node [b,label={ [label distance=-1mm]0:{ \scriptsize b } }] at (-1.0, 1.0) {  } edge from parent[draw=none] 
}
child {node [b,label={ [label distance=-1mm]0:{ \scriptsize a } }] at (0.0, 1.0) {  } edge from parent[draw=none] 
child {node [b,label={ [label distance=-1mm]0:{ \scriptsize c } }] at (0.0, 1.0) {  }  
}
}
child {node [b,label={ [label distance=-1mm]0:{ \scriptsize d } }] at (1.0, 1.0) {  } edge from parent[draw=none] 
}
;
}}
\newcommand{\forestWB}{
\tikz[planar forest ] {

\node [b,label={ [label distance=-1mm]0:{ \scriptsize d } }] at (0.0, 0.0) {  } 
;
}}
\newcommand{\forestXB}{
\tikz[planar forest ] {

\node [b,label={ [label distance=-1mm]0:{ \scriptsize d } }] at (0.0, 0.0) {  } 
;
}}
\newcommand{\forestYB}{
\tikz[planar forest ] {

\draw (-1.0,1.0) edge[-] (0.0,1.0);
\draw (0.0,1.0) edge[-] (1.0,1.0);
\draw (-1.0,1.0) arc (90:270:0.5);
\draw (1.0,1.0) arc (90:-90:0.5);
\draw (-1.0,0.0) edge[-] (1.0,0.0);
\node [draw=none] at (0.0, 0.0) { } 
child {node [b,label={ [label distance=-1mm]0:{ \scriptsize a } }] at (-1.0, 1.0) {  } edge from parent[draw=none] 
child {node [b,label={ [label distance=-1mm]0:{ \scriptsize d } }] at (0.0, 1.0) {  }  
}
}
child {node [b,label={ [label distance=-1mm]0:{ \scriptsize b } }] at (0.0, 1.0) {  } edge from parent[draw=none] 
child {node [b,label={ [label distance=-1mm]0:{ \scriptsize e } }] at (0.0, 1.0) {  }  
}
}
child {node [b,label={ [label distance=-1mm]0:{ \scriptsize c } }] at (1.0, 1.0) {  } edge from parent[draw=none] 
}
;
}}
\newcommand{\forestAC}{
\tikz[planar forest ] {

\draw (0.0,1.0) arc (90:270:0.5);
\draw (0.0,1.0) arc (90:-90:0.5);
\node [draw=none] at (0.0, 0.0) { } 
child {node [b,label={ [label distance=-1mm]0:{ \scriptsize f } }] at (0.0, 1.0) {  } edge from parent[draw=none] 
}
;
}}
\newcommand{\forestBC}{
\tikz[planar forest ] {

\node [b,label={ [label distance=-1mm]0:{ \scriptsize g } }] at (0.0, 0.0) {  } 
child {node [b,label={ [label distance=-1mm]0:{ \scriptsize h } }] at (0.0, 1.0) {  }  
}
;
}}
\newcommand{\forestCC}{
\tikz[planar forest ] {

\node [b,label={ [label distance=-1mm]0:{ \scriptsize h } }] at (0.0, 0.0) {  } 
;
}}
\newcommand{\forestDC}{
\tikz[planar forest ] {

\node [b,label={ [label distance=-1mm]0:{ \scriptsize g } }] at (0.0, 0.0) {  } 
;
}}
\newcommand{\forestEC}{
\tikz[planar forest ] {

\node [b,label={ [label distance=-1mm]0:{ \scriptsize f } }] at (0.0, 0.0) {  } 
;
}}
\newcommand{\forestFC}{
\tikz[planar forest ] {

\draw (-1.0,1.0) edge[-] (0.0,1.0);
\draw (0.0,1.0) edge[-] (1.0,1.0);
\draw (-1.0,1.0) arc (90:270:0.5);
\draw (1.0,1.0) arc (90:-90:0.5);
\draw (-1.0,0.0) edge[-] (1.0,0.0);
\node [draw=none] at (0.0, 0.0) { } 
child {node [b,label={ [label distance=-1mm]0:{ \scriptsize a } }] at (-1.0, 1.0) {  } edge from parent[draw=none] 
child {node [b,label={ [label distance=-1mm]0:{ \scriptsize d } }] at (0.0, 1.0) {  }  
}
}
child {node [b,label={ [label distance=-1mm]0:{ \scriptsize b } }] at (0.0, 1.0) {  } edge from parent[draw=none] 
child {node [b,label={ [label distance=-1mm]0:{ \scriptsize e } }] at (0.0, 1.0) {  }  
}
}
child {node [b,label={ [label distance=-1mm]0:{ \scriptsize c } }] at (1.0, 1.0) {  } edge from parent[draw=none] 
}
;
}}
\newcommand{\forestGC}{
\tikz[planar forest ] {

\node [b,label={ [label distance=-1mm]0:{ \scriptsize f } }] at (0.0, 0.0) {  } 
;
}}
\newcommand{\forestHC}{
\tikz[planar forest ] {

\draw (-1.0,1.0) edge[-] (0.0,1.0);
\draw (0.0,1.0) edge[-] (1.0,1.0);
\draw (-1.0,1.0) arc (90:270:0.5);
\draw (1.0,1.0) arc (90:-90:0.5);
\draw (-1.0,0.0) edge[-] (1.0,0.0);
\node [draw=none] at (0.0, 0.0) { } 
child {node [b,label={ [label distance=-1mm]0:{ \scriptsize a } }] at (-1.0, 1.0) {  } edge from parent[draw=none] 
child {node [b,label={ [label distance=-1mm]0:{ \scriptsize d } }] at (0.0, 1.0) {  }  
}
}
child {node [b,label={ [label distance=-1mm]0:{ \scriptsize b } }] at (0.0, 1.0) {  } edge from parent[draw=none] 
child {node [b,label={ [label distance=-1mm]0:{ \scriptsize e } }] at (0.0, 1.0) {  }  
}
}
child {node [b,label={ [label distance=-1mm]0:{ \scriptsize c } }] at (1.0, 1.0) {  } edge from parent[draw=none] 
}
;
}}
\newcommand{\forestIC}{
\tikz[planar forest ] {

\node [b,label={ [label distance=-1mm]0:{ \scriptsize f } }] at (0.0, 0.0) {  } 
;
}}
\newcommand{\forestJC}{
\tikz[planar forest ] {

\node [b,label={ [label distance=-1mm]0:{ \scriptsize c } }] at (0.0, 0.0) {  } 
child {node [b,label={ [label distance=-1mm]0:{ \scriptsize b } }] at (0.0, 1.0) {  }  
child {node [b,label={ [label distance=-1mm]0:{ \scriptsize a } }] at (-0.5, 1.0) {  }  
child {node [b,label={ [label distance=-1mm]0:{ \scriptsize d } }] at (0.0, 1.0) {  }  
}
}
child {node [b,label={ [label distance=-1mm]0:{ \scriptsize e } }] at (0.5, 1.0) {  }  
}
}
;
}}
\newcommand{\forestKC}{
\tikz[planar forest ] {

\node [b,label={ [label distance=-1mm]0:{ \scriptsize c } }] at (0.0, 0.0) {  } 
child {node [b,label={ [label distance=-1mm]0:{ \scriptsize b } }] at (0.0, 1.0) {  }  
child {node [b,label={ [label distance=-1mm]0:{ \scriptsize a } }] at (-0.5, 1.0) {  }  
child {node [b,label={ [label distance=-1mm]0:{ \scriptsize d } }] at (0.0, 1.0) {  }  
}
}
child {node [b,label={ [label distance=-1mm]0:{ \scriptsize e } }] at (0.5, 1.0) {  }  
}
}
;
}}
\newcommand{\forestLC}{
\tikz[planar forest ] {

\node [b,label={ [label distance=-1mm]0:{ \scriptsize c } }] at (0.0, 0.0) {  } 
child {node [b,label={ [label distance=-1mm]0:{ \scriptsize b } }] at (0.0, 1.0) {  }  
child {node [b,label={ [label distance=-1mm]0:{ \scriptsize a } }] at (-0.5, 1.0) {  }  
child {node [b,label={ [label distance=-1mm]0:{ \scriptsize d } }] at (0.0, 1.0) {  }  
}
}
child {node [b,label={ [label distance=-1mm]0:{ \scriptsize e } }] at (0.5, 1.0) {  }  
}
}
;
}}
\newcommand{\forestMC}{
\tikz[planar forest ] {

\node [b,label={ [label distance=-1mm]0:{ \scriptsize a } }] at (0.0, 0.0) {  } 
child {node [b,label={ [label distance=-1mm]0:{ \scriptsize d } }] at (0.0, 1.0) {  }  
}
;
}}
\newcommand{\forestNC}{
\tikz[planar forest ] {

\node [b,label={ [label distance=-1mm]0:{ \scriptsize c } }] at (0.0, 0.0) {  } 
child {node [b,label={ [label distance=-1mm]0:{ \scriptsize b } }] at (0.0, 1.0) {  }  
child {node [b,label={ [label distance=-1mm]0:{ \scriptsize e } }] at (0.0, 1.0) {  }  
}
}
;
}}
\newcommand{\forestOC}{
\tikz[planar forest ] {

\node [b,label={ [label distance=-1mm]0:{ \scriptsize a } }] at (0.0, 0.0) {  } 
child {node [b,label={ [label distance=-1mm]0:{ \scriptsize d } }] at (0.0, 1.0) {  }  
}
;
}}
\newcommand{\forestPC}{
\tikz[planar forest ] {

\node [b,label={ [label distance=-1mm]0:{ \scriptsize b } }] at (0.0, 0.0) {  } 
child {node [b,label={ [label distance=-1mm]0:{ \scriptsize e } }] at (0.0, 1.0) {  }  
}
;
}}
\newcommand{\forestQC}{
\tikz[planar forest ] {

\node [b,label={ [label distance=-1mm]0:{ \scriptsize c } }] at (0.0, 0.0) {  } 
;
}}
\newcommand{\forestRC}{
\tikz[planar forest ] {

\node [b,label={ [label distance=-1mm]0:{ \scriptsize a } }] at (0.0, 0.0) {  } 
child {node [b,label={ [label distance=-1mm]0:{ \scriptsize d } }] at (0.0, 1.0) {  }  
}
;
}}
\newcommand{\forestSC}{
\tikz[planar forest ] {

\node [b,label={ [label distance=-1mm]0:{ \scriptsize a } }] at (0.0, 0.0) {  } 
child {node [b,label={ [label distance=-1mm]0:{ \scriptsize e } }] at (0.0, 1.0) {  }  
}
;
}}
\newcommand{\forestTC}{
\tikz[planar forest ] {

\node [b,label={ [label distance=-1mm]0:{ \scriptsize b } }] at (0.0, 0.0) {  } 
child {node [b,label={ [label distance=-1mm]0:{ \scriptsize e } }] at (0.0, 1.0) {  }  
}
;
}}
\newcommand{\forestUC}{
\tikz[planar forest ] {

\node [b,label={ [label distance=-1mm]0:{ \scriptsize b } }] at (0.0, 0.0) {  } 
child {node [b,label={ [label distance=-1mm]0:{ \scriptsize e } }] at (0.0, 1.0) {  }  
}
;
}}
\newcommand{\forestVC}{
\tikz[planar forest ] {

\node [b,label={ [label distance=-1mm]0:{ \scriptsize c } }] at (0.0, 0.0) {  } 
;
}}
\newcommand{\forestWC}{
\tikz[planar forest ] {

\node [b,label={ [label distance=-1mm]0:{ \scriptsize a } }] at (0.0, 0.0) {  } 
child {node [b,label={ [label distance=-1mm]0:{ \scriptsize d } }] at (0.0, 1.0) {  }  
}
;
}}
\newcommand{\forestXC}{
\tikz[planar forest ] {

\node [b,label={ [label distance=-1mm]0:{ \scriptsize d } }] at (0.0, 0.0) {  } 
;
}}
\newcommand{\forestYC}{
\tikz[planar forest ] {

\node [b,label={ [label distance=-1mm]0:{ \scriptsize a } }] at (0.0, 0.0) {  } 
;
}}
\newcommand{\forestAD}{
\tikz[planar forest ] {

\node [b,label={ [label distance=-1mm]0:{ \scriptsize b } }] at (0.0, 0.0) {  } 
child {node [b,label={ [label distance=-1mm]0:{ \scriptsize e } }] at (0.0, 1.0) {  }  
}
;
}}
\newcommand{\forestBD}{
\tikz[planar forest ] {

\node [b,label={ [label distance=-1mm]0:{ \scriptsize e } }] at (0.0, 0.0) {  } 
;
}}
\newcommand{\forestCD}{
\tikz[planar forest ] {

\node [b,label={ [label distance=-1mm]0:{ \scriptsize b } }] at (0.0, 0.0) {  } 
;
}}
\newcommand{\forestDD}{
\tikz[planar forest ] {

\node [b,label={ [label distance=-1mm]0:{ \scriptsize c } }] at (0.0, 0.0) {  } 
;
}}
\newcommand{\forestED}{
\tikz[planar forest ] {

\node [b,label={ [label distance=-1mm]0:{ \scriptsize d } }] at (0.0, 0.0) {  } 
;
}}
\newcommand{\forestFD}{
\tikz[planar forest ] {

\node [b,label={ [label distance=-1mm]0:{ \scriptsize a } }] at (0.0, 0.0) {  } 
;
}}
\newcommand{\forestGD}{
\tikz[planar forest ] {

\node [b,label={ [label distance=-1mm]0:{ \scriptsize d } }] at (0.0, 0.0) {  } 
;
}}
\newcommand{\forestHD}{
\tikz[planar forest ] {

\node [b,label={ [label distance=-1mm]0:{ \scriptsize a } }] at (0.0, 0.0) {  } 
;
}}
\newcommand{\forestID}{
\tikz[planar forest ] {

\node [b,label={ [label distance=-1mm]0:{ \scriptsize e } }] at (0.0, 0.0) {  } 
;
}}
\newcommand{\forestJD}{
\tikz[planar forest ] {

\node [b,label={ [label distance=-1mm]0:{ \scriptsize b } }] at (0.0, 0.0) {  } 
;
}}
\newcommand{\forestKD}{
\tikz[planar forest ] {

\node [b,label={ [label distance=-1mm]0:{ \scriptsize e } }] at (0.0, 0.0) {  } 
;
}}
\newcommand{\forestLD}{
\tikz[planar forest ] {

\node [b,label={ [label distance=-1mm]0:{ \scriptsize b } }] at (0.0, 0.0) {  } 
;
}}
\newcommand{\forestMD}{
\tikz[planar forest ] {

\node [b,label={ [label distance=-1mm]0:{ \scriptsize c } }] at (0.0, 0.0) {  } 
;
}}
\newcommand{\forestND}{
\tikz[planar forest ] {

\node [b,label={ [label distance=-1mm]0:{ \scriptsize d } }] at (0.0, 0.0) {  } 
;
}}
\newcommand{\forestOD}{
\tikz[planar forest ] {

\node [b,label={ [label distance=-1mm]0:{ \scriptsize a } }] at (0.0, 0.0) {  } 
;
}}
\newcommand{\forestPD}{
\tikz[planar forest ] {

\node [b,label={ [label distance=-1mm]0:{ \scriptsize d } }] at (0.0, 0.0) {  } 
;
}}
\newcommand{\forestQD}{
\tikz[planar forest ] {

\node [b,label={ [label distance=-1mm]0:{ \scriptsize a } }] at (0.0, 0.0) {  } 
;
}}
\newcommand{\forestRD}{
\tikz[planar forest ] {

\node [b,label={ [label distance=-1mm]0:{ \scriptsize e } }] at (0.0, 0.0) {  } 
;
}}
\newcommand{\forestSD}{
\tikz[planar forest ] {

\node [b,label={ [label distance=-1mm]0:{ \scriptsize b } }] at (0.0, 0.0) {  } 
;
}}
\newcommand{\forestTD}{
\tikz[planar forest ] {

\node [b,label={ [label distance=-1mm]0:{ \scriptsize e } }] at (0.0, 0.0) {  } 
;
}}
\newcommand{\forestUD}{
\tikz[planar forest ] {

\node [b,label={ [label distance=-1mm]0:{ \scriptsize b } }] at (0.0, 0.0) {  } 
;
}}
\newcommand{\forestVD}{
\tikz[planar forest ] {

\node [b,label={ [label distance=-1mm]0:{ \scriptsize c } }] at (0.0, 0.0) {  } 
;
}}
\newcommand{\forestWD}{
\tikz[planar forest ] {

\node [b,label={ [label distance=-1mm]0:{ \scriptsize f } }] at (0.0, 0.0) {  } 
;
}}
\newcommand{\forestXD}{
\tikz[planar forest ] {

\node [b,label={ [label distance=-1mm]0:{ \scriptsize h } }] at (0.0, 0.0) {  } 
;
}}
\newcommand{\forestYD}{
\tikz[planar forest ] {

\node [b,label={ [label distance=-1mm]0:{ \scriptsize g } }] at (0.0, 0.0) {  } 
;
}}
\newcommand{\forestAE}{
\tikz[planar forest ] {

\node [b] at (0.0, 0.0) {  } 
child {node [b] at (-0.5, 1.0) {  }  
child {node [b] at (0.0, 1.0) {  }  
}
}
child {node [b] at (0.5, 1.0) {  }  
}
;
}}
\newcommand{\forestBE}{
\tikz[planar forest ] {

\node [b] at (0.0, 0.0) {  } 
child {node [b] at (-0.5, 1.0) {  }  
}
child {node [b] at (0.5, 1.0) {  }  
child {node [b] at (0.0, 1.0) {  }  
}
}
;
}}
\newcommand{\forestCE}{
\tikz[planar forest ] {

\node [b] at (0.0, 0.0) {  } 
child {node [b] at (0.0, 1.0) {  }  
}
;
}}
\newcommand{\forestDE}{
\tikz[planar forest ] {

\node [b] at (0.0, 0.0) {  } 
child {node [b] at (-1.0, 1.0) {  }  
}
child {node [b] at (0.0, 1.0) {  }  
child {node [b] at (-0.5, 1.0) {  }  
}
child {node [b] at (0.5, 1.0) {  }  
}
}
child {node [b] at (1.0, 1.0) {  }  
}
;
}}
\newcommand{\forestEE}{
\tikz[planar forest ] {

\node [b] at (0.0, 0.0) {  } 
child {node [b] at (-1.5, 1.0) {  }  
child {node [b] at (0.0, 1.0) {  }  
}
}
child {node [b] at (-0.5, 1.0) {  }  
}
child {node [b] at (0.5, 1.0) {  }  
child {node [b] at (-0.5, 1.0) {  }  
}
child {node [b] at (0.5, 1.0) {  }  
}
}
child {node [b] at (1.5, 1.0) {  }  
}
;
}}
\newcommand{\forestFE}{
\tikz[planar forest ] {

\node [b] at (0.0, 0.0) {  } 
child {node [b] at (-1.5, 1.0) {  }  
child {node [b] at (0.0, 1.0) {  }  
child {node [b] at (0.0, 1.0) {  }  
}
}
}
child {node [b] at (0.0, 1.0) {  }  
child {node [b] at (-0.5, 1.0) {  }  
}
child {node [b] at (0.5, 1.0) {  }  
}
}
child {node [b] at (1.0, 1.0) {  }  
}
;
}}
\newcommand{\forestGE}{
\tikz[planar forest ] {

\node [b] at (0.0, 0.0) {  } 
child {node [b] at (-1.0, 1.0) {  }  
}
child {node [b] at (0.0, 1.0) {  }  
child {node [b] at (-1.0, 1.0) {  }  
child {node [b] at (0.0, 1.0) {  }  
}
}
child {node [b] at (0.0, 1.0) {  }  
}
child {node [b] at (1.0, 1.0) {  }  
}
}
child {node [b] at (1.0, 1.0) {  }  
}
;
}}
\newcommand{\forestHE}{
\tikz[planar forest ] {

\node [b] at (0.0, 0.0) {  } 
child {node [b] at (-1.0, 1.0) {  }  
}
child {node [b] at (0.0, 1.0) {  }  
child {node [b] at (-0.5, 1.0) {  }  
child {node [b] at (0.0, 1.0) {  }  
child {node [b] at (0.0, 1.0) {  }  
}
}
}
child {node [b] at (0.5, 1.0) {  }  
}
}
child {node [b] at (1.0, 1.0) {  }  
}
;
}}
\newcommand{\forestIE}{
\tikz[planar forest ] {

\node [b] at (0.0, 0.0) {  } 
child {node [b] at (-1.0, 1.0) {  }  
}
child {node [b] at (0.0, 1.0) {  }  
child {node [b] at (-0.5, 1.0) {  }  
}
child {node [b] at (0.5, 1.0) {  }  
child {node [b] at (0.0, 1.0) {  }  
child {node [b] at (0.0, 1.0) {  }  
}
}
}
}
child {node [b] at (1.0, 1.0) {  }  
}
;
}}
\newcommand{\forestJE}{
\tikz[planar forest ] {

\node [b] at (0.0, 0.0) {  } 
child {node [b] at (-1.0, 1.0) {  }  
}
child {node [b] at (0.0, 1.0) {  }  
child {node [b] at (-0.5, 1.0) {  }  
}
child {node [b] at (0.5, 1.0) {  }  
}
}
child {node [b] at (1.5, 1.0) {  }  
child {node [b] at (0.0, 1.0) {  }  
child {node [b] at (0.0, 1.0) {  }  
}
}
}
;
}}
\newcommand{\forestKE}{
\tikz[planar forest ] {

\node [b] at (0.0, 0.0) {  } 
;
}}
\newcommand{\forestLE}{
\tikz[planar forest ] {

\node [b] at (0.0, 0.0) {  } 
child {node [b] at (0.0, 1.0) {  }  
}
;
}}
\newcommand{\forestME}{
\tikz[planar forest ] {

\node [b] at (0.0, 0.0) {  } 
child {node [b] at (-0.5, 1.0) {  }  
}
child {node [b] at (0.5, 1.0) {  }  
}
;
}}
\newcommand{\forestNE}{
\tikz[planar forest ] {

\node [b] at (0.0, 0.0) {  } 
child {node [b] at (-1.5, 1.0) {  }  
}
child {node [b] at (-0.5, 1.0) {  }  
child {node [b] at (0.0, 1.0) {  }  
}
}
child {node [b] at (0.5, 1.0) {  }  
}
child {node [b] at (1.5, 1.0) {  }  
}
;
}}
\newcommand{\forestOE}{
\tikz[planar forest ] {

\node [b] at (0.0, 0.0) {  } 
child {node [b] at (-1.5, 1.0) {  }  
child {node [b] at (0.0, 1.0) {  }  
}
}
child {node [b] at (-0.5, 1.0) {  }  
}
child {node [b] at (0.5, 1.0) {  }  
}
child {node [b] at (1.5, 1.0) {  }  
}
;
}}
\newcommand{\forestPE}{
\tikz[planar forest ] {

\node [b] at (0.0, 0.0) {  } 
child {node [b] at (-0.5, 1.0) {  }  
child {node [b] at (-0.5, 1.0) {  }  
}
child {node [b] at (0.5, 1.0) {  }  
child {node [b] at (0.0, 1.0) {  }  
}
}
}
child {node [b] at (0.5, 1.0) {  }  
}
;
}}
\newcommand{\forestQE}{
\tikz[planar forest ] {

\node [b] at (0.0, 0.0) {  } 
child {node [b] at (-0.5, 1.0) {  }  
child {node [b] at (-0.5, 1.0) {  }  
child {node [b] at (0.0, 1.0) {  }  
}
}
child {node [b] at (0.5, 1.0) {  }  
}
}
child {node [b] at (0.5, 1.0) {  }  
}
;
}}
\newcommand{\forestRE}{
\tikz[planar forest ] {

\node [b] at (0.0, 0.0) {  } 
child {node [b] at (-0.5, 1.0) {  }  
}
child {node [b] at (0.5, 1.0) {  }  
child {node [b] at (-0.5, 1.0) {  }  
}
child {node [b] at (0.5, 1.0) {  }  
child {node [b] at (0.0, 1.0) {  }  
}
}
}
;
}}
\newcommand{\forestSE}{
\tikz[planar forest ] {

\node [b] at (0.0, 0.0) {  } 
child {node [b] at (-0.5, 1.0) {  }  
}
child {node [b] at (0.5, 1.0) {  }  
child {node [b] at (-0.5, 1.0) {  }  
child {node [b] at (0.0, 1.0) {  }  
}
}
child {node [b] at (0.5, 1.0) {  }  
}
}
;
}}
\newcommand{\forestTE}{
\tikz[planar forest ] {

\node [a] at (0.0, 0.0) {  } 
;
}}
\newcommand{\forestUE}{
\tikz[planar forest ] {

\node [b] at (0.0, 0.0) {  } 
;
}}
\newcommand{\forestVE}{
\tikz[planar forest ] {

\draw (-2.0,1.0) edge[-] (0.0,1.0);
\draw (0.0,1.0) edge[-] (2.5,1.0);
\draw (-2.0,1.0) arc (90:270:0.5);
\draw (2.5,1.0) arc (90:-90:0.5);
\draw (-2.0,0.0) edge[-] (2.5,0.0);
\node [draw=none] at (0.0, 0.0) { } 
child {node [b,label={ [label distance=-1mm]0:{ \scriptsize a } }] at (-2.0, 1.0) {  } edge from parent[draw=none] 
child {node [b,label={ [label distance=-1mm]0:{ \scriptsize d } }] at (0.0, 1.0) {  }  
}
}
child {node [b,label={ [label distance=-1mm]0:{ \scriptsize b } }] at (0.0, 1.0) {  } edge from parent[draw=none] 
child {node [a,label={ [label distance=-1mm]0:{ \scriptsize e } }] at (-1.0, 1.0) {  }  
child {node [b,label={ [label distance=-1mm]0:{ \scriptsize f } }] at (-0.5, 1.0) {  }  
}
child {node [b,label={ [label distance=-1mm]0:{ \scriptsize g } }] at (0.5, 1.0) {  }  
}
}
child {node [b,label={ [label distance=-1mm]0:{ \scriptsize h } }] at (0.0, 1.0) {  }  
}
child {node [b,label={ [label distance=-1mm]0:{ \scriptsize i } }] at (1.0, 1.0) {  }  
child {node [b,label={ [label distance=-1mm]0:{ \scriptsize j } }] at (0.0, 1.0) {  }  
}
}
}
child {node [b,label={ [label distance=-1mm]0:{ \scriptsize c } }] at (2.5, 1.0) {  } edge from parent[draw=none] 
child {node [b,label={ [label distance=-1mm]0:{ \scriptsize k } }] at (0.0, 1.0) {  }  
child {node [b,label={ [label distance=-1mm]0:{ \scriptsize l } }] at (-0.5, 1.0) {  }  
}
child {node [b,label={ [label distance=-1mm]0:{ \scriptsize m } }] at (0.5, 1.0) {  }  
}
}
}
;
}}
\newcommand{\forestWE}{
\tikz[planar forest ] {

\node [a] at (0.0, 0.0) {  } 
;
}}
\newcommand{\forestXE}{
\tikz[planar forest ] {

\draw (-0.5,1.0) edge[-] (0.5,1.0);
\draw (-0.5,1.0) arc (90:270:0.5);
\draw (0.5,1.0) arc (90:-90:0.5);
\draw (-0.5,0.0) edge[-] (0.5,0.0);
\node [draw=none] at (0.0, 0.0) { } 
child {node [b] at (-0.5, 1.0) {  } edge from parent[draw=none] 
}
child {node [b] at (0.5, 1.0) {  } edge from parent[draw=none] 
}
;
}}
\newcommand{\forestYE}{
\tikz[planar forest ] {

\node [b] at (0.0, 0.0) {  } 
child {node [b] at (-0.5, 1.0) {  }  
}
child {node [b] at (0.5, 1.0) {  }  
}
;
}}
\newcommand{\forestAF}{
\tikz[planar forest ] {

\draw (-0.5,1.0) edge[-] (0.5,1.0);
\draw (-0.5,1.0) arc (90:270:0.5);
\draw (0.5,1.0) arc (90:-90:0.5);
\draw (-0.5,0.0) edge[-] (0.5,0.0);
\node [draw=none] at (0.0, 0.0) { } 
child {node [b] at (-0.5, 1.0) {  } edge from parent[draw=none] 
}
child {node [b] at (0.5, 1.0) {  } edge from parent[draw=none] 
child {node [a] at (-0.5, 1.0) {  }  
}
child {node [b] at (0.5, 1.0) {  }  
}
}
;
}}
\newcommand{\forestBF}{
\tikz[planar forest ] {

\draw (0.0,1.0) arc (90:270:0.5);
\draw (0.0,1.0) arc (90:-90:0.5);
\node [draw=none] at (0.0, 0.0) { } 
child {node [b] at (0.0, 1.0) {  } edge from parent[draw=none] 
}
;
}}
\newcommand{\forestCF}{
\tikz[planar forest ] {

\draw (-0.5,1.0) edge[-] (0.5,1.0);
\draw (-0.5,1.0) arc (90:270:0.5);
\draw (0.5,1.0) arc (90:-90:0.5);
\draw (-0.5,0.0) edge[-] (0.5,0.0);
\node [draw=none] at (0.0, 0.0) { } 
child {node [b] at (-0.5, 1.0) {  } edge from parent[draw=none] 
}
child {node [b] at (0.5, 1.0) {  } edge from parent[draw=none] 
}
;
}}
\newcommand{\forestDF}{
\tikz[planar forest ] {

\draw (-0.75,1.0) edge[-] (0.75,1.0);
\draw (-0.75,1.0) arc (90:270:0.5);
\draw (0.75,1.0) arc (90:-90:0.5);
\draw (-0.75,0.0) edge[-] (0.75,0.0);
\node [draw=none] at (0.0, 0.0) { } 
child {node [b] at (-0.75, 1.0) {  } edge from parent[draw=none] 
child {node [a] at (0.0, 1.0) {  }  
child {node [b] at (-0.5, 1.0) {  }  
}
child {node [b] at (0.5, 1.0) {  }  
}
}
}
child {node [b] at (0.75, 1.0) {  } edge from parent[draw=none] 
child {node [a] at (-0.5, 1.0) {  }  
}
child {node [b] at (0.5, 1.0) {  }  
}
}
;
}}
\newcommand{\forestEF}{
\tikz[planar forest ] {

\draw (0.0,1.0) arc (90:270:0.5);
\draw (0.0,1.0) arc (90:-90:0.5);
\node [draw=none] at (0.0, 0.0) { } 
child {node [b] at (0.0, 1.0) {  } edge from parent[draw=none] 
}
;
}}
\newcommand{\forestFF}{
\tikz[planar forest ] {

\draw (-0.5,1.0) edge[-] (0.5,1.0);
\draw (-0.5,1.0) arc (90:270:0.5);
\draw (0.5,1.0) arc (90:-90:0.5);
\draw (-0.5,0.0) edge[-] (0.5,0.0);
\node [draw=none] at (0.0, 0.0) { } 
child {node [b] at (-0.5, 1.0) {  } edge from parent[draw=none] 
}
child {node [b] at (0.5, 1.0) {  } edge from parent[draw=none] 
}
;
}}
\newcommand{\forestGF}{
\tikz[planar forest ] {

\draw (-0.5,1.0) edge[-] (0.5,1.0);
\draw (-0.5,1.0) arc (90:270:0.5);
\draw (0.5,1.0) arc (90:-90:0.5);
\draw (-0.5,0.0) edge[-] (0.5,0.0);
\node [draw=none] at (0.0, 0.0) { } 
child {node [b] at (-0.5, 1.0) {  } edge from parent[draw=none] 
}
child {node [b] at (0.5, 1.0) {  } edge from parent[draw=none] 
child {node [a] at (-1.0, 1.0) {  }  
child {node [b] at (-0.5, 1.0) {  }  
}
child {node [b] at (0.5, 1.0) {  }  
}
}
child {node [a] at (0.0, 1.0) {  }  
}
child {node [b] at (1.0, 1.0) {  }  
}
}
;
}}
\newcommand{\forestHF}{
\tikz[planar forest ] {

\draw (0.0,1.0) arc (90:270:0.5);
\draw (0.0,1.0) arc (90:-90:0.5);
\node [draw=none] at (0.0, 0.0) { } 
child {node [b] at (0.0, 1.0) {  } edge from parent[draw=none] 
}
;
}}
\newcommand{\forestIF}{
\tikz[planar forest ] {

\draw (-0.5,1.0) edge[-] (0.5,1.0);
\draw (-0.5,1.0) arc (90:270:0.5);
\draw (0.5,1.0) arc (90:-90:0.5);
\draw (-0.5,0.0) edge[-] (0.5,0.0);
\node [draw=none] at (0.0, 0.0) { } 
child {node [b] at (-0.5, 1.0) {  } edge from parent[draw=none] 
}
child {node [b] at (0.5, 1.0) {  } edge from parent[draw=none] 
}
;
}}
\newcommand{\forestJF}{
\tikz[planar forest ] {

\draw (-0.5,1.0) edge[-] (0.5,1.0);
\draw (-0.5,1.0) arc (90:270:0.5);
\draw (0.5,1.0) arc (90:-90:0.5);
\draw (-0.5,0.0) edge[-] (0.5,0.0);
\node [draw=none] at (0.0, 0.0) { } 
child {node [b] at (-0.5, 1.0) {  } edge from parent[draw=none] 
}
child {node [b] at (0.5, 1.0) {  } edge from parent[draw=none] 
child {node [a] at (-0.5, 1.0) {  }  
child {node [b] at (0.0, 1.0) {  }  
child {node [b] at (-0.5, 1.0) {  }  
}
child {node [b] at (0.5, 1.0) {  }  
}
}
}
child {node [b] at (0.5, 1.0) {  }  
}
}
;
}}
\newcommand{\forestKF}{
\tikz[planar forest ] {

\draw (0.0,1.0) arc (90:270:0.5);
\draw (0.0,1.0) arc (90:-90:0.5);
\node [draw=none] at (0.0, 0.0) { } 
child {node [b] at (0.0, 1.0) {  } edge from parent[draw=none] 
}
;
}}
\newcommand{\forestLF}{
\tikz[planar forest ] {

\draw (-0.5,1.0) edge[-] (0.5,1.0);
\draw (-0.5,1.0) arc (90:270:0.5);
\draw (0.5,1.0) arc (90:-90:0.5);
\draw (-0.5,0.0) edge[-] (0.5,0.0);
\node [draw=none] at (0.0, 0.0) { } 
child {node [b] at (-0.5, 1.0) {  } edge from parent[draw=none] 
}
child {node [b] at (0.5, 1.0) {  } edge from parent[draw=none] 
}
;
}}
\newcommand{\forestMF}{
\tikz[planar forest ] {

\draw (-0.5,1.0) edge[-] (0.5,1.0);
\draw (-0.5,1.0) arc (90:270:0.5);
\draw (0.5,1.0) arc (90:-90:0.5);
\draw (-0.5,0.0) edge[-] (0.5,0.0);
\node [draw=none] at (0.0, 0.0) { } 
child {node [b] at (-0.5, 1.0) {  } edge from parent[draw=none] 
}
child {node [b] at (0.5, 1.0) {  } edge from parent[draw=none] 
child {node [a] at (-0.5, 1.0) {  }  
}
child {node [b] at (0.5, 1.0) {  }  
child {node [b] at (0.0, 1.0) {  }  
child {node [b] at (-0.5, 1.0) {  }  
}
child {node [b] at (0.5, 1.0) {  }  
}
}
}
}
;
}}
\newcommand{\forestNF}{
\tikz[planar forest ] {

\draw (0.0,1.0) arc (90:270:0.5);
\draw (0.0,1.0) arc (90:-90:0.5);
\node [draw=none] at (0.0, 0.0) { } 
child {node [b] at (0.0, 1.0) {  } edge from parent[draw=none] 
}
;
}}
\newcommand{\forestOF}{
\tikz[planar forest ] {

\draw (-0.5,1.0) edge[-] (0.5,1.0);
\draw (-0.5,1.0) arc (90:270:0.5);
\draw (0.5,1.0) arc (90:-90:0.5);
\draw (-0.5,0.0) edge[-] (0.5,0.0);
\node [draw=none] at (0.0, 0.0) { } 
child {node [b] at (-0.5, 1.0) {  } edge from parent[draw=none] 
}
child {node [b] at (0.5, 1.0) {  } edge from parent[draw=none] 
}
;
}}
\newcommand{\forestPF}{
\tikz[planar forest ] {

\draw (-0.5,1.0) edge[-] (0.5,1.0);
\draw (-0.5,1.0) arc (90:270:0.5);
\draw (0.5,1.0) arc (90:-90:0.5);
\draw (-0.5,0.0) edge[-] (0.5,0.0);
\node [draw=none] at (0.0, 0.0) { } 
child {node [b] at (-0.5, 1.0) {  } edge from parent[draw=none] 
}
child {node [b] at (0.5, 1.0) {  } edge from parent[draw=none] 
child {node [a] at (-0.5, 1.0) {  }  
}
child {node [b] at (0.5, 1.0) {  }  
}
}
;
}}
\newcommand{\forestQF}{
\tikz[planar forest ] {

\draw (0.0,1.0) arc (90:270:0.5);
\draw (0.0,1.0) arc (90:-90:0.5);
\node [draw=none] at (0.0, 0.0) { } 
child {node [b] at (0.0, 1.0) {  } edge from parent[draw=none] 
child {node [a] at (0.0, 1.0) {  }  
child {node [b] at (-0.5, 1.0) {  }  
}
child {node [b] at (0.5, 1.0) {  }  
}
}
}
;
}}
\newcommand{\forestRF}{
\tikz[planar forest ] {

\draw (0.0,1.0) arc (90:270:0.5);
\draw (0.0,1.0) arc (90:-90:0.5);
\node [draw=none] at (0.0, 0.0) { } 
child {node [b] at (0.0, 1.0) {  } edge from parent[draw=none] 
child {node [b] at (0.0, 1.0) {  }  
}
}
;
}}
\newcommand{\forestSF}{
\tikz[planar forest ] {

\node [b] at (0.0, 0.0) {  } 
child {node [b] at (-0.5, 1.0) {  }  
}
child {node [b] at (0.5, 1.0) {  }  
}
;
}}
\newcommand{\forestTF}{
\tikz[planar forest ] {

\node [b] at (0.0, 0.0) {  } 
child {node [b,label={ [label distance=-1mm]0:{ \scriptsize v } }] at (-0.5, 1.0) {  }  
child {node [b] at (-0.5, 1.0) {  }  
}
child {node [b] at (0.5, 1.0) {  }  
}
}
child {node [b] at (0.5, 1.0) {  }  
}
;
}}
\newcommand{\forestUF}{
\tikz[planar forest ] {

\draw (0.0,1.0) arc (90:270:0.5);
\draw (0.0,1.0) arc (90:-90:0.5);
\node [draw=none] at (0.0, 0.0) { } 
child {node [b] at (0.0, 1.0) {  } edge from parent[draw=none] 
child {node [b] at (0.0, 1.0) {  }  
}
}
;
}}
\newcommand{\forestVF}{
\tikz[planar forest ] {

\node [b] at (0.0, 0.0) {  } 
child {node [b,label={ [label distance=-1mm]0:{ \scriptsize v } }] at (-0.5, 1.0) {  }  
child {node [b] at (-1.0, 1.0) {  }  
}
child {node [b] at (0.0, 1.0) {  }  
child {node [b] at (-0.5, 1.0) {  }  
}
child {node [b] at (0.5, 1.0) {  }  
}
}
child {node [b] at (1.0, 1.0) {  }  
}
}
child {node [b] at (0.5, 1.0) {  }  
}
;
}}
\newcommand{\forestWF}{
\tikz[planar forest ] {

\draw (-0.5,1.0) edge[-] (0.5,1.0);
\draw (-0.5,1.0) arc (90:270:0.5);
\draw (0.5,1.0) arc (90:-90:0.5);
\draw (-0.5,0.0) edge[-] (0.5,0.0);
\node [draw=none] at (0.0, 0.0) { } 
child {node [b] at (-0.5, 1.0) {  } edge from parent[draw=none] 
}
child {node [b] at (0.5, 1.0) {  } edge from parent[draw=none] 
child {node [b] at (0.0, 1.0) {  }  
}
}
;
}}
\newcommand{\forestXF}{
\tikz[planar forest ] {

\node [b,label={ [label distance=-1mm]0:{ \scriptsize v } }] at (0.0, 0.0) {  } 
child {node [b] at (-1.0, 1.0) {  }  
}
child {node [b] at (0.0, 1.0) {  }  
}
child {node [b] at (1.0, 1.0) {  }  
child {node [b] at (0.0, 1.0) {  }  
}
}
;
}}
\newcommand{\forestYF}{
\tikz[planar forest ] {

\draw (-0.5,1.0) edge[-] (0.5,1.0);
\draw (-0.5,1.0) arc (90:270:0.5);
\draw (0.5,1.0) arc (90:-90:0.5);
\draw (-0.5,0.0) edge[-] (0.5,0.0);
\node [draw=none] at (0.0, 0.0) { } 
child {node [b] at (-0.5, 1.0) {  } edge from parent[draw=none] 
}
child {node [b] at (0.5, 1.0) {  } edge from parent[draw=none] 
child {node [b] at (0.0, 1.0) {  }  
}
}
;
}}
\newcommand{\forestAG}{
\tikz[planar forest ] {

\draw (0.0,1.0) arc (90:270:0.5);
\draw (0.0,1.0) arc (90:-90:0.5);
\node [draw=none] at (0.0, 0.0) { } 
child {node [b] at (0.0, 1.0) {  } edge from parent[draw=none] 
child {node [a] at (-1.0, 1.0) {  }  
}
child {node [a] at (0.0, 1.0) {  }  
}
child {node [b] at (1.0, 1.0) {  }  
child {node [b] at (0.0, 1.0) {  }  
}
}
}
;
}}
\newcommand{\forestBG}{
\tikz[planar forest ] {

\draw (-1.5,1.0) edge[-] (0.0,1.0);
\draw (0.0,1.0) edge[-] (1.0,1.0);
\draw (-1.5,1.0) arc (90:270:0.5);
\draw (1.0,1.0) arc (90:-90:0.5);
\draw (-1.5,0.0) edge[-] (1.0,0.0);
\node [draw=none] at (0.0, 0.0) { } 
child {node [b] at (-1.5, 1.0) {  } edge from parent[draw=none] 
child {node [b] at (0.0, 1.0) {  }  
}
}
child {node [b,label={ [label distance=-1mm]0:{ \scriptsize v } }] at (0.0, 1.0) {  } edge from parent[draw=none] 
child {node [b] at (-0.5, 1.0) {  }  
}
child {node [b] at (0.5, 1.0) {  }  
}
}
child {node [b] at (1.0, 1.0) {  } edge from parent[draw=none] 
}
;
}}
\newcommand{\forestCG}{
\tikz[planar forest ] {

\node [b] at (0.0, 0.0) {  } 
child {node [b] at (0.0, 1.0) {  }  
}
;
}}
\newcommand{\forestDG}{
\tikz[planar forest ] {

\draw (-2.0,1.0) edge[-] (0.0,1.0);
\draw (0.0,1.0) edge[-] (1.0,1.0);
\draw (-2.0,1.0) arc (90:270:0.5);
\draw (1.0,1.0) arc (90:-90:0.5);
\draw (-2.0,0.0) edge[-] (1.0,0.0);
\node [draw=none] at (0.0, 0.0) { } 
child {node [b] at (-2.0, 1.0) {  } edge from parent[draw=none] 
child {node [b] at (0.0, 1.0) {  }  
}
}
child {node [b,label={ [label distance=-1mm]0:{ \scriptsize v } }] at (0.0, 1.0) {  } edge from parent[draw=none] 
child {node [b] at (-1.0, 1.0) {  }  
child {node [b] at (0.0, 1.0) {  }  
}
}
child {node [b] at (0.0, 1.0) {  }  
}
child {node [b] at (1.0, 1.0) {  }  
}
}
child {node [b] at (1.0, 1.0) {  } edge from parent[draw=none] 
}
;
}}
\newtheorem{theorem}{Theorem}[section]
\newtheorem{corollary}{Corollary}[theorem]
\newtheorem{lemma}[theorem]{Lemma}
\newtheorem{proposition}[theorem]{Proposition}
\newtheorem{definition}[theorem]{Definition}
\newtheorem{example}[theorem]{Example}
\DeclareMathOperator{\graft}{\triangleright}
\DeclareMathOperator{\Div}{\text{Div}}
\DeclareMathOperator{\bgraft}{\blacktriangleright}
\title{The Universal Post-Lie-Rinehart Algebra of Planar Aromatic Trees}
\author{Ludwig Rahm\footnote{Section de Mathématiques, Université de Genève. \texttt{ludwig.rahm@unige.ch}.}}
\begin{document}
\maketitle

\begin{abstract}
	This paper defines the algebraic structure of tracial post-Lie-Rinehart algebras and describes the free object in this category. Post-Lie-Rinehart algebras is a generalisation of pre-Lie-Rinehart algebras, and of post-Lie algebroids.
\end{abstract}

\section{Introduction}

B-series were introduced by John Butcher \cite{Butcher1963} in the 1960s, as a way to index numerical methods for ODEs by using rooted trees. This indexation is nowadays understood in the context of pre-Lie algebras as a pre-Lie morphism. The universal pre-Lie algebra is given by the vector space spanned by rooted trees together with the grafting product \cite{ChapotonLivernet2000}. We use $\mathcal{T}$ to denote the vector space, and $T$ to denote its canonical basis. The universality property means that any map $\bullet \to x$ sending the single-vertex tree $\bullet$ to some element $x$ in another pre-Lie algebra $\mathcal{A}$, extends uniquely to a pre-Lie morphism $\mathcal{T} \to \mathcal{A}$. A B-series is such a pre-Lie morphism. Every Euclidean space naturally comes equipped with a flat and torsion free connection. Using this connection as a product, the space of vector fields on $\mathbb{R}^d$ becomes a pre-Lie algebra. A B-series $B_a$ is an element in the graded completion of the free pre-Lie algebra, indexed by the coefficient map $a:T \to \mathbb{R}$. By the universality property any map $\bullet \to f$ mapping the single-vertex tree to a vector field $f$, uniquely extends to a pre-Lie morphism $B_a \to B_a(f)$ sending the B-series $B_a$ to an infinite series of vector fields. In applications, the B-series $B_a$ indexes a B-series method while $B_a(f)$ describes the method applied to a specific ODE $y'=f(y)$. The algebraic properties of such series has both been used to study properties of the associated numerical methods, and to study geometric properties of the associated series of vector fields. A connection between B-series and affine equivariant families of mappings of vector fields on Euclidean spaces was established in \cite{MclachlanModinMuntheKaasVerdier2016}. This connection was further studied in \cite{MuntheKaasVerdier2015}, where the authors looked at a generalisation of B-series known as \textit{aromatic B-series} and proved he following important theorem.
\begin{theorem}\cite{MuntheKaasVerdier2015}
	Let $L$ be the canonical pre-Lie algebra of vector fields on a finite-dimensional Euclidean space. A smooth local mapping $\Phi: L \to L$ can be expanded in an aromatic B-series if and only if $\Phi \circ \xi = \xi \circ \Phi$ for all pre-Lie isomorphisms $\xi: L \to L$.
\end{theorem}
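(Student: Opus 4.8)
The plan is to prove the two implications separately; the forward one (an aromatic B-series is equivariant) is the classical-invariant-theory half, while the converse (an equivariant smooth local map is an aromatic B-series) carries the real work.

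For the forward implication, recall that each aromatic forest $\gamma$ determines an elementary differential $f \mapsto F_\gamma(f)$, built from $f$ and the canonical flat torsion-free connection by iterated covariant differentiation and tensor contraction: the rooted-tree part produces the output vector, and each cyclic component (an ``aroma'') contributes a scalar obtained as a trace of a word in blocks of $\nabla^\bullet f$. A pre-Lie isomorphism $\xi$ preserves $\triangleright$, hence preserves the associated connection, since $\nabla_{\xi f}\,\xi g = \xi f \triangleright \xi g = \xi(f\triangleright g) = \xi\,\nabla_f g$, so it intertwines covariant differentiation. Once one checks that the aroma traces are likewise $\triangleright$-intrinsic --- equivalently, that every pre-Lie automorphism of $L$ is the pushforward by an invertible affine transformation, under which the trace of a matrix word is conjugation-invariant; this is the one structural input to pin down --- it follows that $F_\gamma(\xi f) = \xi(F_\gamma(f))$ for every $\gamma$, and summing termwise over the graded series yields $\Phi\circ\xi = \xi\circ\Phi$.

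For the converse, let $\Phi$ be smooth, local, and equivariant. First I would localize: by a Peetre-type theorem, ``local'' forces $(\Phi f)(x)$ to depend only on the $\infty$-jet of $f$ at $x$, and smoothly, so $(\Phi f)(x) = G_x\big(f(x), \nabla f(x), \nabla^2 f(x), \dots\big)$. Equivariance under base translations (pushforwards by translations, which are affine, hence pre-Lie isomorphisms) removes the dependence of $G_x$ on $x$, leaving a single smooth $G$. Passing to Taylor coefficients in the scaling $f \mapsto \varepsilon f$, the degree-$k$ coefficient is a \emph{polynomial} map, homogeneous of degree $k$ in the jet of $f$, translation-invariant, and --- by equivariance under $\mathrm{GL}(d)$ --- $\mathrm{GL}(d)$-equivariant with values in $\mathbb{R}^d$. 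By the first fundamental theorem of invariant theory for $\mathrm{GL}(d)$, such a map is a linear combination of complete contractions of tensor products of the tensors $\nabla^{j_i} f$; contracting $k$ of them down to a single free vector index forces $\sum_i j_i = k-1$ on the tree part and closed cycles for the aromas, so only finitely many index patterns occur, and these are precisely the aromatic forests on $k$ vertices. Reassembling the Taylor series, $\Phi = \sum_\gamma a(\gamma)\,F_\gamma$ is an aromatic B-series; equivariance under dilations is consistent with, and organizes, this grading.

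The main obstacle is the invariant-theoretic dictionary in the converse: one must check carefully that translation-invariant, $\mathrm{GL}(d)$-equivariant polynomial jet functionals valued in $\mathbb{R}^d$ are \emph{exactly} the aromatic elementary differentials --- matching the cyclic components of an aromatic forest to the trace factors produced by the first fundamental theorem, and verifying that every admissible contraction is realized by some forest --- and this should be done uniformly in $d$, so that the low-dimensional Cayley--Hamilton relations among the traces impose no relations on the coefficients $a(\gamma)$. Smoothness itself poses no difficulty once one works at the level of Taylor coefficients, each automatically a polynomial of a fixed degree; the remaining care is in the two auxiliary inputs --- the Peetre-type localization, and the identification (used in the forward direction) of the pre-Lie automorphism group of $L$ with the affine group.
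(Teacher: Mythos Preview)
The paper does not prove this theorem: it is quoted in the introduction, attributed to \cite{MuntheKaasVerdier2015}, and used only as motivation for studying aromatic structures. There is therefore no proof in the paper to compare your proposal against.

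That said, your outline is a faithful sketch of the argument in the original Munthe-Kaas--Verdier paper. The forward direction via naturality of elementary differentials under affine pushforward, and the converse via Peetre-type localisation followed by the first fundamental theorem for $\mathrm{GL}(d)$ applied to the jet variables, are exactly the two halves of their proof. Your identification of the delicate point --- matching the full contraction patterns produced by invariant theory with aromatic forests, and handling the dimension-dependent trace relations --- is also where the original paper spends its effort. One small correction: in the forward direction you should not need to separately ``pin down'' that pre-Lie automorphisms of $L$ are affine pushforwards; the statement of the theorem as phrased here takes equivariance under pre-Lie isomorphisms as the hypothesis, and affine maps are the relevant subclass that already suffices for the invariant-theory argument in the converse, while for the forward direction the elementary differentials are built purely from $\triangleright$ and traces of compositions of $\delta$-maps, which are preserved by any pre-Lie isomorphism by definition.
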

Aromatic B-series is a generalisation of B-series introduced in \cite{IserlesQuispelTse2006,MuntheKaasVerdier2015}, motivated by the study of volume preserving integration algorithms. An aroma is a connected directed graph where each node has exactly one outgoing edge. The set of all aromas is denoted $A$, they span the vector space $\mathcal{A}$, and we write $S(\mathcal{A})$ for the free symmetric algebra generated by $\mathcal{A}$. As a vector space, $S(\mathcal{A})$ has a canonical basis consisting of all directed graphs where each node has exactly one outgoing edge. Whereas a B-series $B_a$ is indexed by a coefficient map $a: T \to \mathbb{R}$, an aromatic B-series $AB_{\alpha}$ is indexed by a coefficient map $\alpha: T \to S(\mathcal{A})$. It was shown in \cite{FloystadManchonMuntheKaas2020} that the algebraic structure behind aromatic B-series is \textit{pre-Lie-Rinehart algebras}, and that the space of aromatic trees $\mathcal{AT} := S(\mathcal{A}) \otimes \mathcal{T}$ gives the free tracial pre-Lie-Rinehart algebra. A pre-Lie-Rinehart algebra is a Lie-Rinehart algebra equipped with a flat and torsion free connection.

B-series, pre-Lie algebras and the formalism of rooted trees has been generalised in several other directions, including Lie-Butcher series \cite{Munthe-KaasWright2008}, (planarly) branched rough paths \cite{CurryEbrahimiFardManchonMuntheKaas2018,Gubinelli2010} and regularity structures \cite{Hairer2013}. Lie-Butcher series generalise B-series to homogeneous spaces, and are used for Lie group integration methods. These spaces comes equipped with a flat connection that has constant torsion, and their vector fields form a so-called post-Lie algebra \cite{Munthe-KaasWright2008,Vallette2007} when equipped with this connection. One can now ask if the success of aromatic B-series for Euclidean spaces, can be replicated also on homogeneous spaces. This paper aims to take a first step in this direction by describing the free post-Lie-Rinehart algebra, being the homogeneous space generalisation of the free pre-Lie-Rinehart algebra.

The paper is organized as follows. In section \ref{sec::Prelim}, we recall the necessary background information on pre-Lie-Rinehart algebras and post-Lie algebras. In section \ref{sec::PostLieRinehart} we construct the free Post-Lie-Rinehart algebra.

\section{Preliminaries} \label{sec::Prelim}

We recall the definitions and basic results on pre-Lie Rinehart algebras and on post-Lie algebras.

\subsection{Tracial pre-Lie Rinehart algebras}

All definitions and results in this subsection are taken from \cite{FloystadManchonMuntheKaas2020}. We first recall the definition of a Lie-Rinehart algebra.

\begin{definition}
	Let $R$ be a unital commutative algebra. A Lie-Rinehart algebra over $R$ consists of an $R$-module $L$ and an $R$-linear map
	\begin{align*}
		\rho: L \to \text{Der}(R,R)
	\end{align*}
	such that
	\begin{itemize}
		\item $L$ is a Lie algebra with Lie bracket $\llbracket \cdot,\cdot \rrbracket$.
		\item $\rho$ is a Lie algebra morphism.
		\item For $f \in R$ and $X,Y \in L$, the identity
		\begin{align*}
			\llbracket X,fY \rrbracket=(\rho(X)f)Y + f\llbracket X,Y\rrbracket
		\end{align*}
		holds.
	\end{itemize}
\end{definition}

The map $\rho$ in the definition above is called the \textit{anchor map}. In order to talk about the Lie-Rinehart algebra being pre-Lie, we furthermore need the notion of a \textit{connection}. Note that the following definition agrees with the algebraic properties that define a connection on a manifold.

\begin{definition}
	A connection is an $R$-linear map
	\begin{align*}
		\triangledown:& L \to \text{End}(L),\\
		&X \to \triangledown_X,
	\end{align*}
	such that:
	\begin{align*}
		\triangledown_X(fY)=(\rho(X)f)Y+f\triangledown_XY.
	\end{align*}
\end{definition}

If $(L,R)$ is a Lie-Rinehart algebra with connection $\triangledown$, one can respectively define the torsion $\mathcal{T}$ and curvature $\mathcal{R}$ by:
\begin{align*}
	\mathcal{T}(X,Y):=&\triangledown_XY - \triangledown_YX - \llbracket X,Y \rrbracket, \\
	\mathcal{R}(X,Y,Z):=& \triangledown_X (\triangledown_Y Z) - \triangledown_Y (\triangledown_X Z ) - \triangledown_{\llbracket X,Y \rrbracket } Z.
\end{align*}

Again note that these definitions agrees with the torsion and curvature of a manifold with connection. If the connection is flat and torsion-free, meaning that both $\mathcal{T},\mathcal{R}$ are identically zero, then $(L,\triangledown)$ forms a \textit{pre-Lie algebra}. This means that the connection $\triangledown$, seen as a product on $L$, satisfies the pre-Lie identity:
\begin{align*}
	\triangledown_X (\triangledown_Y Z) - \triangledown_{\triangledown_X Y}Z - \triangledown_Y(\triangledown_X Z) + \triangledown_{\triangledown_Y X}Z =0.
\end{align*}
In the guiding example of a manifold with a connection, the connection is flat and torsion-free exactly when the manifold is locally Euclidean. 

\begin{definition}
	A pre-Lie-Rinehart algebra is a Lie-Rinehart algebra endowed with a flat and torsion-free connection.
\end{definition}

A connection $\triangledown: L \to \text{End}(L)$ extends to a map $\triangledown: L \to \text{End}(\text{Hom}(L))$ by:
\begin{align*}
	\triangledown_X(u)(Y)=\triangledown_X(u(Y))-u(\triangledown_X Y).
\end{align*}
This lets us define the \textit{algebra of elementary R-module homomorphisms}.

\begin{definition}
	Let the linear operator $\delta$ denote the map
	\begin{align*}
		\delta: L \to& \hom(L),\\
		\delta X(Z)=&\triangledown_Z X.
	\end{align*}
	Let the algebra of elementary R-module endomorphisms be the R-module subalgebra of $\hom_R(L)$ generated by $\{\triangledown_{Y_1}\dots \triangledown_{Y_n}\delta X: X,Y_1,\dots,Y_n \in L  \}$. It will be denoted by $E\ell_R(L)$.
\end{definition}

\begin{definition}
	Let $(L,R)$ be a Lie-Rinehart algebra. We say that the Lie-Rinehart algebra is tracial if there exists a map $\tau: E\ell_R(L) \to R$ such that:
	\begin{align*}
		\tau(\alpha \circ \beta)=&\tau(\beta \circ \alpha),\\
		\tau(\triangledown_X \alpha)=&\rho(X)\tau(\alpha).
	\end{align*}
	In this case, we denote $\Div := \tau \circ \delta$.
\end{definition}

\begin{proposition} \cite{FloystadManchonMuntheKaas2020}
	In a pre-Lie-Rinehart algebra $L$, the algebra $E\ell_R(L)$ is generated by $\{\delta X:X \in L \}$.
\end{proposition}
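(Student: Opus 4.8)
The plan is to show that the $R$-subalgebra $B$ of $\hom_R(L)$ generated by $\{\delta X : X\in L\}$ is already stable under each operator $\triangledown_X$ acting on $\hom(L)$. Granting this, the proposition follows at once: $B\subseteq E\ell_R(L)$ because each $\delta X$ is itself a generator of $E\ell_R(L)$ (the case $n=0$), and conversely every generator $\triangledown_{Y_1}\cdots\triangledown_{Y_n}\delta X$ of $E\ell_R(L)$ then lies in $B$ by induction on $n$, so $E\ell_R(L)=B$.

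First I would record two elementary properties of the extended connection on $\hom(L)$, both obtained by unwinding the defining formula $\triangledown_X(u)(Y)=\triangledown_X(u(Y))-u(\triangledown_X Y)$ together with the fact that $\triangledown_X$ is a connection on $L$: the Leibniz rule $\triangledown_X(u\circ v)=\triangledown_X(u)\circ v+u\circ\triangledown_X(v)$ for $u,v\in\hom(L)$, and the twisted $R$-linearity $\triangledown_X(fu)=(\rho(X)f)\,u+f\,\triangledown_X(u)$ for $f\in R$. Since $B$ is the $R$-span of the composites $\delta Z_1\circ\cdots\circ\delta Z_k$, these two properties reduce the claim $\triangledown_X(B)\subseteq B$ to the single verification that $\triangledown_X(\delta Z)\in B$ for every $Z\in L$.

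The crucial point is therefore the identity $\triangledown_Y(\delta X)=\delta(\triangledown_Y X)-\delta X\circ\delta Y$. To establish it, evaluate the left-hand side on an arbitrary $Z\in L$: by the definition of the extended connection and of $\delta$ one gets $\triangledown_Y(\delta X)(Z)=\triangledown_Y(\triangledown_Z X)-\triangledown_{\triangledown_Y Z}X$. Here the flat and torsion-free hypothesis enters, in the form of the pre-Lie identity: applying it with its three arguments specialised to $Y,Z,X$ rewrites this expression as $\triangledown_Z(\triangledown_Y X)-\triangledown_{\triangledown_Z Y}X$. The first term equals $\delta(\triangledown_Y X)(Z)$ because $\delta W(Z)=\triangledown_Z W$, and the second equals $\delta X(\delta Y(Z))=(\delta X\circ\delta Y)(Z)$; as $Z$ was arbitrary, the identity follows. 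Since $\triangledown_Y X\in L$, both $\delta(\triangledown_Y X)$ and $\delta X\circ\delta Y$ lie in $B$, so $\triangledown_Y(\delta X)\in B$ as required.

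With the identity in hand the conclusion is immediate. I expect the only genuine subtlety to be the bookkeeping in the crucial identity: one must apply the pre-Lie relation with the correct permutation of its three arguments and keep careful track of the convention $\delta W(Z)=\triangledown_Z W$ (rather than $\triangledown_W Z$), since interchanging these would reverse the order of the composition $\delta X\circ\delta Y$. The remaining ingredients — the Leibniz rule and twisted $R$-linearity of the extended connection, and the induction on $n$ — are routine.
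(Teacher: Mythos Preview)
Your proof is correct. The paper does not reprove this cited proposition, but your argument is precisely the pre-Lie specialisation of the paper's own proof of the post-Lie analogue (Lemma~\ref{Lemma::ElementarySubAlg}): there the identity $Y\triangleright\delta X=\delta(Y\triangleright X)-\delta X\circ\tilde{\delta}Y$ is derived, and setting the bracket to zero collapses $\tilde{\delta}Y$ to $\delta Y$, recovering exactly your key identity $\triangledown_Y(\delta X)=\delta(\triangledown_Y X)-\delta X\circ\delta Y$ and the same inductive scheme.
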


\subsection{The free tracial pre-Lie-Rinehart algebra} \label{ssec::FreePreLieRinehart}

The free tracial pre-Lie-Rinehart algebra was described in \cite{FloystadManchonMuntheKaas2020}. We first introduce some notation. Let $\mathcal{T}$ denote the vector space spanned by non-planar rooted trees and let $\mathcal{F}$ denote the vector space of forests, meaning commutative words in non-planar rooted trees.
\begin{align*}
	\mathcal{T}:=\text{span}\{\forestA,\forestB,\forestC,\forestD,\forestE,\forestF=\forestG,\forestH,\forestI   \dots  \}.
\end{align*}
Letting $\mathcal{C}$ be some set, one can then define $\mathcal{T}_{\mathcal{C}}$ as the vector space of rooted trees where each vertex is labeled by an element from $\mathcal{C}$. The following constructions would then lead to the free objects generated by the set $\mathcal{C}$, however we will leave our vertices (mostly) unlabeled and only explicitly describe the single-generator case for convenience. We may in the future label our vertices as a way to refer to a specific vertex, this does not mean that the label is part of a generating set.

An aroma is a connected directed graph where every vertex has exactly one incoming edge, the vector space spanned by aromas is denoted $\mathcal{A}$. An aroma consists of a central cycle, with trees attached to each vertex in this cycle. Note that the edges are oriented away from the cycle, and note that the cycle could consist of a single root with a loop edge. We will represent aromas by this central cycle, with edges going right-to-left, note that the representation is invariant under cyclic permutations:
\begin{align*}
	\forestJ=\forestK=\forestL.
\end{align*}
The symmetric algebra generated by $\mathcal{A}$ is denoted $S(\mathcal{A})$. We denote the space $S(\mathcal{A})\otimes \mathcal{T}$ by $\mathcal{AT}$, these are called \textit{aromatic trees}. The space $\mathcal{AF}=S(\mathcal{A})\otimes S(\mathcal{T})$ are called the aromatic forests, these are represented by commutative words where the letters are aromas and trees. We shall use the non-caliographic $A,T,F,AT,AF$ to denote sets rather than vector spaces.

We define the product $\curvearrowright$ on $\mathcal{AT}$ by letting $t_1 \curvearrowright t_2$ be the sum of all aromatic trees obtained by adding an edge from some vertex of $t_2$ to the root of $t_1$. For example:
\begin{align*}
	\forestM \curvearrowright \forestN=& \forestO + \forestP \\
	\forestQ \curvearrowright \forestR\forestS=& \forestT\forestU+\forestV\forestW + \forestX\forestY \\
	\forestAB\forestBB \curvearrowright \forestCB\forestDB=&\forestEB\forestFB\forestGB+\forestHB\forestIB\forestJB + \forestKB \forestLB\forestMB.
\end{align*}

This product then lets us define the following maps, which turns $(\mathcal{AT},S(\mathcal{\mathcal{A}}))$ into a pre-Lie-Rinehart algebra:
\begin{align*}
	\triangledown:& \mathcal{AT} \to \text{Hom}(\mathcal{AT},\mathcal{AT}), \; t \to (s \to t \curvearrowright s),\\
	\rho:& \mathcal{AT} \to \text{Der}(S(\mathcal{A}),S(\mathcal{A})), \; t \to (s \to t \curvearrowright s),\\
	\delta:& \mathcal{AT}\to \text{Hom}_{S(\mathcal{A})}(\mathcal{AT},\mathcal{AT}), \; t \to (s \to s \curvearrowright t).
\end{align*}

\begin{proposition} \cite{FloystadManchonMuntheKaas2020}
	$\mathcal{AT}$ is a pre-Lie-Rinehart algebra over $S(\mathcal{A})$ with anchor map $\rho$ and connection $\triangledown$.
\end{proposition}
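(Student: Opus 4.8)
The plan is to peel off the routine parts — the $S(\mathcal{A})$-module structure, the connection and Lie-Rinehart Leibniz rules, and torsion-freeness — and concentrate the work on a single combinatorial statement: the pre-Lie identity for the grafting product $\curvearrowright$, which will also supply that $\rho$ is a Lie-algebra morphism and that the curvature vanishes. Throughout, the one elementary observation doing the bookkeeping is that an aromatic factor carried by the grafted tree, or carried by the base outside the grafting site, simply multiplies into the output via the commutative product of $S(\mathcal{A})$.

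First I would dispose of the cheap reductions. Since $\mathcal{AT}=S(\mathcal{A})\otimes\mathcal{T}$ is a free $S(\mathcal{A})$-module on the basis $T$, the module axioms are clear. Torsion-freeness is automatic, because the bracket is defined as $\llbracket X,Y\rrbracket:=X\curvearrowright Y-Y\curvearrowright X=\triangledown_XY-\triangledown_YX$, so $\mathcal{T}(X,Y)=0$. For the Leibniz rules, write an element of $\mathcal{AT}$ as an aromatic forest $f\in S(\mathcal{A})$ together with a rooted tree, so that $fY$ has vertex set $V(f)\sqcup V(Y)$; grafting the root of $X$ onto a vertex of $fY$ splits into the terms where the vertex lies in $V(f)$ — which, after absorbing the aromatic factor of $X$, is by definition $(\rho(X)f)Y$ — and the terms where it lies in $V(Y)$, which give $f\,(X\curvearrowright Y)$. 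Hence $\triangledown_X(fY)=X\curvearrowright(fY)=(\rho(X)f)Y+f\,\triangledown_XY$, which is at once the connection Leibniz rule and, using $(fY)\curvearrowright X=f\,(Y\curvearrowright X)$, the Lie-Rinehart identity $\llbracket X,fY\rrbracket=(\rho(X)f)Y+f\llbracket X,Y\rrbracket$. The same type of case split (the chosen vertex lies in $s_1$ or in $s_2$) shows $\rho(X)(s_1s_2)=(\rho(X)s_1)s_2+s_1(\rho(X)s_2)$, so $\rho(X)\in\mathrm{Der}(S(\mathcal{A}),S(\mathcal{A}))$; here one notes that grafting a rooted tree onto a vertex of an aroma again yields an aromatic forest, since the root of the grafted tree acquires exactly its one missing incoming edge. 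Finally $\rho(fX)=f\rho(X)$ because the factor $f$ of $fX$ is merely multiplied into the result, giving $S(\mathcal{A})$-linearity of $\rho$.

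The heart of the proof is the pre-Lie identity for $\curvearrowright$. Fix $Z$, either in $\mathcal{AT}$ or in $S(\mathcal{A})$, viewed as a directed graph with vertex set $V(Z)$. I would expand $X\curvearrowright(Y\curvearrowright Z)$ by distinguishing, in the outer grafting, whether the root of $X$ is attached to a vertex of $Z$ or to a vertex of the inserted copy of $Y$ (recalling $V(Y\curvearrowright Z)=V(Z)\sqcup V(Y)$):
\[
X\curvearrowright(Y\curvearrowright Z)=(X\curvearrowright Y)\curvearrowright Z+\sum_{v,w\in V(Z)}\big(\text{graft }Y\text{ at }v\text{ and }X\text{ at }w\text{ in }Z\big),
\]
where the first summand collects the terms with $X$ landing in $Y$ — first insert $X$ into $Y$, then the enlarged $Y$ into $Z$ — the matching of aromatic factors being valid by commutativity of $S(\mathcal{A})$. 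The double sum is visibly symmetric under $X\leftrightarrow Y$, so subtracting the same identity with $X$ and $Y$ swapped yields
\[
X\curvearrowright(Y\curvearrowright Z)-(X\curvearrowright Y)\curvearrowright Z=Y\curvearrowright(X\curvearrowright Z)-(Y\curvearrowright X)\curvearrowright Z,
\]
the pre-Lie identity. Specializing $Z\in S(\mathcal{A})$ gives $\rho(\llbracket X,Y\rrbracket)=[\rho(X),\rho(Y)]$, so $\rho$ is a Lie-algebra morphism; the Jacobi identity for $\llbracket\cdot,\cdot\rrbracket$ follows by the usual passage from pre-Lie to Lie, completing the Lie-Rinehart axioms; and $\mathcal{R}(X,Y,Z)=\triangledown_X\triangledown_YZ-\triangledown_Y\triangledown_XZ-\triangledown_{\llbracket X,Y\rrbracket}Z$ is exactly the left-hand side of the pre-Lie identity, hence zero, so $\triangledown$ is flat.

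The step that genuinely needs care — the main obstacle — is the bookkeeping inside the displayed expansion when a grafting lands on a vertex lying in an aroma rather than in the rooted-tree part. One must check that attaching a tree to such a vertex enlarges that aroma while leaving the rest of the vertex set untouched, so that the two graftings in $X\curvearrowright(Y\curvearrowright Z)$ operate on disjoint pieces of data and truly commute, and that the aromatic factors $\omega_X$, $\omega_Y$ generated along the way combine via the commutative product of $S(\mathcal{A})$ — it is precisely this commutativity that makes the ``$X$ into $Z$, $Y$ into $Z$'' terms symmetric in $X$ and $Y$. One should also observe that ``$X$ into $Y$'' includes the possibility that the root of $X$ is attached to a vertex inside the aromatic factor of $Y$, but this is already built into the definition of $X\curvearrowright Y$, so the split into ``$X$ into $Y$'' versus ``$X$ into $Z$'' stays exhaustive and disjoint. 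Beyond this case analysis there is no further algebraic content; the classical rooted-tree statement of \cite{ChapotonLivernet2000} is the sub-case in which no aromas occur.
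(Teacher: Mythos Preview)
The paper does not give its own proof of this proposition; it is quoted from \cite{FloystadManchonMuntheKaas2020} as background in Section~\ref{ssec::FreePreLieRinehart}, so there is nothing in the present paper to compare your argument against. Your proof is correct and is essentially the one found in the cited reference: the module and Leibniz axioms reduce to splitting the vertex set of the target into its aromatic and tree parts, and the pre-Lie identity for $\curvearrowright$ --- established by the standard observation that the associator consists of two independent graftings onto vertices of $Z$ and is therefore symmetric in $X,Y$ --- simultaneously yields flatness, the Jacobi identity for $\llbracket\cdot,\cdot\rrbracket$, and that $\rho$ is a Lie-algebra morphism. Your careful tracking of the aromatic factors and the commutativity of $S(\mathcal{A})$ is precisely the extra bookkeeping needed to lift the classical rooted-tree argument to $\mathcal{AT}$.
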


Let $\mathcal{V}$ denote the vector space of pairs $(v,t)$ where $t \in T$ and $v$ is a vertex of $t$. We let elements of $\mathcal{V}$ represent endomorphisms of $\mathcal{T}$, the element $(v,t)$ represents the map $s \to s \curvearrowright_v t$, which sends $s$ to the aromatic tree obtained by adding an edge from the vertex $v$ to the root of $s$. For example:
\begin{align*}
	\forestNB\forestOB \curvearrowright_v \forestPB\forestQB=\forestRB\forestSB\forestTB.
\end{align*}
Let $\delta: \mathcal{T} \to \mathcal{V}$ denote the map that sends $t$ to $\sum_{v \in t} (v,t)$, and let $\circ$ denote the product on $\mathcal{V}$ given by $(v,t)\circ (v',t')=(v,t\curvearrowright_{v'}t')$. Next define the map $\tau: \mathcal{V} \to \mathcal{A}$ by sending $(v,t)$ to the aroma obtained by adding an edge from the vertex $v$ to the root of $t$. For example:
\begin{align*}
\tau((d, \forestUB) )=\forestVB.
\end{align*}
Note that composition in $\mathcal{V}$ is an associative product, hence the commutator 
\begin{align*}
	[(v_1,t_1),(v_2,t_2) ]:=& (v_1,t_1)\circ (v_2,t_2) - (v_2,t_2)\circ(v_1,t_1) \\
	=&(v_1,t_1 \curvearrowright_{v_2} t_2) - (v_2,t_2 \curvearrowright_{v_2} t_1)
\end{align*}
is a Lie bracket. Then we have the following lemma:

\begin{lemma}\cite{FloystadManchonMuntheKaas2020}
	The vector space $\mathcal{A}$ can naturally be identified with the quotient space $\mathcal{V}/[\mathcal{V},\mathcal{V}]$, so that $\tau$ becomes the natural projection from $\mathcal{V}$ to $\mathcal{V}/[\mathcal{V},\mathcal{V}]$.
\end{lemma}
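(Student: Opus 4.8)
The plan is to prove that $\tau\colon\mathcal{V}\to\mathcal{A}$ is surjective with kernel exactly $[\mathcal{V},\mathcal{V}]$; the induced map $\bar\tau\colon\mathcal{V}/[\mathcal{V},\mathcal{V}]\to\mathcal{A}$ is then an isomorphism, and identifying $\mathcal{A}$ with $\mathcal{V}/[\mathcal{V},\mathcal{V}]$ along $\bar\tau^{-1}$ makes $\tau=\bar\tau\circ\pi$ literally the canonical projection $\pi$.

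First I would dispatch the easy inclusion $[\mathcal{V},\mathcal{V}]\subseteq\ker\tau$. Unwinding the definitions, for basis elements $(v_1,t_1),(v_2,t_2)$ both $\tau\bigl((v_1,t_1)\circ(v_2,t_2)\bigr)$ and $\tau\bigl((v_2,t_2)\circ(v_1,t_1)\bigr)$ equal the \emph{same} directed graph on the disjoint union of the vertex sets of $t_1$ and $t_2$: the one obtained from $t_1\sqcup t_2$ by adding an edge from $v_2$ to the root of $t_1$ together with an edge from $v_1$ to the root of $t_2$. Hence $\tau$ annihilates every commutator, and $\bar\tau$ is well defined.

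Next, surjectivity and a description of the fibres. An aroma $a$ has a distinguished cycle $c_0\to c_1\to\dots\to c_{k-1}\to c_0$ with a rooted tree (bush) $\beta_j$ attached at $c_j$; deleting the cycle edge $e_i\colon c_i\to c_{i+1}$ (indices mod $k$) turns $a$ into a rooted tree $t^{[i]}$ with root $c_{i+1}$, and $\tau\bigl((c_i,t^{[i]})\bigr)=a$ since adding the edge $c_i\to c_{i+1}$ back recovers $a$. Conversely, a basis pair $(v,t)$ satisfies $\tau((v,t))=a$ only if $t=a\setminus e$ for a cycle edge $e$ — removing a non-cycle edge disconnects $a$, so $t$ would not be a single tree — with $v$ the tail of $e$; thus $\tau^{-1}(a)$ is exactly $\{(c_i,t^{[i]}):i=0,\dots,k-1\}$. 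Consequently $\ker\tau$ is spanned by the within-fibre differences $(c_i,t^{[i]})-(c_{i'},t^{[i']})$, and by telescoping it suffices to realise each consecutive difference $(c_i,t^{[i]})-(c_{i+1},t^{[i+1]})$ as a commutator.

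The real work is this last point. Write $j=i+1 \bmod k$ (for $k=1$, i.e.\ a loop, the fibre is a single element and there is nothing to prove, so assume $k\ge 2$). In $t^{[i]}$, which is rooted at $c_j$, cut the cycle edge $c_j\to c_{j+1}$: this splits $t^{[i]}$ into the subtree $t_1$ rooted at $c_{j+1}$ — the path $c_{j+1}\to\dots\to c_i$ carrying the bushes $\beta_{j+1},\dots,\beta_i$ (a single vertex $c_i$ when $k=2$) — and the bush $t_2=\beta_j$ rooted at $c_j$, with $t^{[i]}=t_1\curvearrowright_{c_j}t_2$. Since the marked vertex $c_i$ lies in $t_1$, this gives $(c_i,t^{[i]})=(c_i,t_1)\circ(c_j,t_2)$. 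Computing the opposite composite, $(c_j,t_2)\circ(c_i,t_1)=(c_j,\,\beta_j\curvearrowright_{c_i}t_1)$, and one checks that $\beta_j\curvearrowright_{c_i}t_1$ is precisely $t^{[j]}$ — it is the path $c_{j+1}\to\dots\to c_i\to c_j$ with the bushes attached, that is $a\setminus e_j$ — carrying marked vertex $c_j$; hence $(c_j,t_2)\circ(c_i,t_1)=(c_j,t^{[j]})$. Therefore $[(c_i,t_1),(c_j,t_2)]=(c_i,t^{[i]})-(c_j,t^{[j]})\in[\mathcal{V},\mathcal{V}]$, which yields $\ker\tau\subseteq[\mathcal{V},\mathcal{V}]$ and finishes the proof. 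The delicate part throughout is purely bookkeeping: keeping straight which vertex is the root, which is the marked one, and verifying that the reassembled trees coincide on the nose. Conceptually this is the familiar statement that cyclically rotating a ``linear'' datum is an inner move — as for $HH_0$ of an associative algebra, $\mathcal{A}$ being the ``cyclic closure'' of $\mathcal{V}$ — but it must be witnessed explicitly on marked trees.
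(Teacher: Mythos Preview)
Your proof is correct. The paper does not supply its own argument for this lemma --- it is cited from \cite{FloystadManchonMuntheKaas2020}, and even the planar analogue stated later (Lemma~\ref{Lemma::PAisMPTQuotient}) is handled by pointing back to that same source with ``the proof \dots\ can be copied essentially word for word'' --- so there is no in-paper proof to compare against. Your write-up fills in what the paper omits: the three steps (commutators map to zero because both composition orders close up to the same directed graph on $t_1\sqcup t_2$; the fibre over an aroma is exactly the set of its cycle-edge deletions; consecutive fibre elements differ by the explicit commutator obtained from splitting $t^{[i]}$ at the next cycle edge) are clean and complete, and the bookkeeping on roots versus marked vertices is handled correctly.
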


It will in the future be advantageous to represent an aroma in $\mathcal{A}$ by choosing a representative from the pre-image of $\tau$ in $\mathcal{V}$. \\

Now let $\mathcal{E}$ denote the vector space of pairs $(v,A_1\dots A_n t)$ where $A_1 \dots A_n t \in \mathcal{AT}$ and $v$ is a vertex in $A_1 \dots A_n t$. We let elements of $\mathcal{E}$ represent $S(\mathcal{A})$-linear endomorphisms of $\mathcal{AT}$, by letting $(v,A_1\dots A_n t)$ represent the map $A'_1\dots A'_m t' \to (A'_1 \dots A'_m t') \curvearrowright_v (A_1 \dots A_n t)$. We extend the map $\delta$ to be a map $\mathcal{AT} \to S(\mathcal{A})$. Furthermore let $\mathcal{B}$ denote the vector space of aromas with one marked vertex, $(v,A_1)$ where $A_1\in A$ and $v$ is a vertex in $A_1$. These represent homomorphisms $\mathcal{T} \to \mathcal{A}$ by $t \to (t \curvearrowright_v A)$. Extend $\delta$ to a map $\mathcal{A} \to \mathcal{B}$, again by summing over all vertices. Now let $\mathcal{D}$ denote the vector space generated by expressions $(\delta A)t$, where $A$ is an aroma and $t$ is a tree. We have natural compositions:
\begin{align*}
	D_c \circ V_c :& (\delta A_1 t_1 ) \circ (v,t_2) = (v,t_2 \curvearrowright A_1)t_1,\\
	V_c \circ D_c:& (v,t_1) \circ (\delta A_1 t_2)=\delta A_1(t_2 \curvearrowright_v t_1 ),\\
	D_c \circ D_c:& (\delta A_1 t_1) \circ (\delta A_2 t_2) = (t_2 \curvearrowright A_1) \delta A_2 t_1.
\end{align*}

This lets us represent the algebra of elementary endomorphisms as as a direct sum between aromatic trees with a marked vertex on a tree, aromatic trees with a marked vertex in the interior cycle of an aroma, and aromatic trees with a marked vertex on an aroma but not in the interior cycle:

\begin{proposition} \cite{FloystadManchonMuntheKaas2020}
	The algebra of elementary endomorphisms $E\ell_{S(\mathcal{A})}(\mathcal{AT})$ is generated by $\mathcal{D},\mathcal{V}$ and the space spanned by all of their compositions $\mathcal{D} \circ \mathcal{V}$. It decomposes as a free $S(\mathcal{A})$-module:
	\begin{align*}
		E\ell_{S(\mathcal{A})}(\mathcal{AT})= (S(\mathcal{A})\otimes \mathcal{V}) \bigoplus (S(\mathcal{A}) \otimes \mathcal{D} ) \bigoplus (S(\mathcal{A}) \otimes (\mathcal{D} \circ \mathcal{V} )  ).
	\end{align*}
\end{proposition}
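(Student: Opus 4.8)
The plan is to show three things in sequence: that the listed spaces $\mathcal{V}$, $\mathcal{D}$, and $\mathcal{D}\circ\mathcal{V}$ together with the $S(\mathcal{A})$-action generate $E\ell_{S(\mathcal{A})}(\mathcal{AT})$; that the sum of the three summands is direct; and that each summand is a free $S(\mathcal{A})$-module. For the generation claim, I would start from the earlier proposition that in a pre-Lie-Rinehart algebra $E\ell_R(L)$ is generated by $\{\delta X : X \in L\}$ together with the $\triangledown_Y$'s, and specialize to $L = \mathcal{AT}$, $R = S(\mathcal{A})$. An $S(\mathcal{A})$-linear endomorphism $\delta X$ for $X = A_1\cdots A_n t \in \mathcal{AT}$ is, by $S(\mathcal{A})$-linearity and the definition of $\delta$, a sum over vertices $v$ of $X$ of the maps $(v, A_1\cdots A_n t)$, i.e.\ it lies in $\mathcal{E} = S(\mathcal{A})\otimes\mathcal{V} \oplus S(\mathcal{A})\otimes\mathcal{D} \oplus \cdots$ once we sort vertices by whether they sit on the tree part, in the cycle of an aroma, or on a branch hanging off a cycle. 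The composition rules displayed just before the statement ($D_c\circ V_c$, $V_c\circ D_c$, $D_c\circ D_c$) then show that the $S(\mathcal{A})$-subalgebra generated by these building blocks closes up to exactly $(S(\mathcal{A})\otimes\mathcal{V})\oplus(S(\mathcal{A})\otimes\mathcal{D})\oplus(S(\mathcal{A})\otimes(\mathcal{D}\circ\mathcal{V}))$: composing two $\mathcal{V}$'s stays in $\mathcal{V}$, a $\mathcal{D}$ after a $\mathcal{V}$ or a $\mathcal{D}$ after a $\mathcal{D}$ lands back in $S(\mathcal{A})\otimes\mathcal{D}$, and a $\mathcal{V}$ after a $\mathcal{D}$ lands in $S(\mathcal{A})\otimes\mathcal{D}$ again, while the only genuinely new element is a single $\mathcal{D}$ composed with a single $\mathcal{V}$, giving the third summand; any longer alternating word reduces via the rules to one of these forms.

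Next I would prove directness and freeness simultaneously by exhibiting an explicit $S(\mathcal{A})$-basis and checking linear independence by evaluation. The three summands are distinguished by what kind of object carries the marked vertex: an element of $S(\mathcal{A})\otimes\mathcal{V}$ is (a monomial in aromas) times (a tree with a marked vertex); an element of $S(\mathcal{A})\otimes\mathcal{D}$ is (a monomial in aromas) times an expression $(\delta A)t$ with the marked vertex on the aroma $A$ but in its interior cycle; and $S(\mathcal{A})\otimes(\mathcal{D}\circ\mathcal{V})$ consists of (a monomial in aromas) times an aromatic tree with the marked vertex on an aroma but off the cycle. Since these three positions for the marked vertex are mutually exclusive and exhaust all possibilities, the sum is direct as vector spaces, and the freeness over $S(\mathcal{A})$ is immediate from the tensor-product form of each summand once one knows $\mathcal{V}$, $\mathcal{D}$, $\mathcal{D}\circ\mathcal{V}$ are honest vector spaces with the stated bases (pairs (vertex, tree), expressions $(\delta A)t$, and aromatic trees with an off-cycle marked vertex, respectively). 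To pin down that distinct basis elements really act as distinct endomorphisms — the subtle point — I would evaluate on a well-chosen family of test inputs: grafting a generic small tree such as $\bullet$ or $\forestDB$ onto the marked vertex produces an aromatic tree (or forest) from which the marked vertex, and hence the basis element, can be read back off uniquely, because grafting the root of the input onto vertex $v$ records the location of $v$ in the output graph.

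The main obstacle, I expect, is the bookkeeping around the quotient identifying $\mathcal{A}$ with $\mathcal{V}/[\mathcal{V},\mathcal{V}]$: when a composition such as $D_c\circ V_c$ produces an expression $(v, t_2 \curvearrowright A_1)t_1$, the aroma $A_1$ is only defined up to cyclic rotation of its interior cycle, so one must check that the marked-vertex data and the composition rules are well defined on the quotient, and that the "interior cycle'' versus "branch off the cycle'' dichotomy is rotation-invariant (it is, since rotation permutes cycle vertices among themselves and branch vertices among themselves). A secondary subtlety is confirming that no longer composite — say $\mathcal{V}\circ\mathcal{D}\circ\mathcal{V}$ or $\mathcal{D}\circ\mathcal{V}\circ\mathcal{D}$ — escapes the three summands; this is handled by induction on word length using the three displayed reduction rules, noting each rule strictly decreases the number of alternations while keeping the result in the span of the three summands. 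Once these normalization issues are dispatched, the freeness and the direct-sum decomposition follow formally from the basis description.
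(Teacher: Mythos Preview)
The paper does not actually prove this proposition: it is quoted from \cite{FloystadManchonMuntheKaas2020} as background in the preliminaries, with no accompanying argument. So there is no in-paper proof to compare your proposal against.

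That said, the paper does prove the post-Lie analogue later (the Lemma establishing that $\Omega = (S(\mathcal{PA})\otimes\mathcal{MPT})\oplus(S(\mathcal{PA})\otimes\Psi)\oplus(S(\mathcal{PA})\otimes(\Psi\circ\mathcal{MPT}))$ is a subalgebra with free decomposition), and your strategy matches that argument essentially term for term: closure is read off from the displayed composition rules, and directness comes from classifying basis elements by whether the marked vertex sits on the tree, on the interior cycle of an aroma, or on an aroma branch off the cycle. Your additional remarks about evaluating on test inputs and about well-definedness modulo cyclic rotation go beyond what the paper writes down, but they are correct and harmless elaborations.

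One small slip worth fixing: in your generation paragraph the word ``after'' is used inconsistently. You write that ``a $\mathcal{D}$ after a $\mathcal{V}$ \dots\ lands back in $S(\mathcal{A})\otimes\mathcal{D}$'' and then that ``the only genuinely new element is a single $\mathcal{D}$ composed with a single $\mathcal{V}$, giving the third summand''; these cannot both be right under the same reading of ``after''. Checking against the displayed rules, $D_c\circ V_c$ produces the genuinely new third summand, $V_c\circ D_c$ lands in $\mathcal{D}$, and $D_c\circ D_c$ lands in $S(\mathcal{A})\otimes\mathcal{D}$. The conclusion you draw is correct, but the sentence needs straightening out.
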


\begin{corollary} \cite{FloystadManchonMuntheKaas2020}
	The pre-Lie-Rinehart algebra $\mathcal{AT}$ is tracial.
\end{corollary}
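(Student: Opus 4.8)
The approach is to exhibit the trace $\tau$ directly from the decomposition of $E\ell_{S(\mathcal{A})}(\mathcal{AT})$ in the preceding Proposition, and then check the two defining identities of a tracial Lie-Rinehart algebra. By that Proposition every elementary endomorphism is, uniquely, an $S(\mathcal{A})$-linear combination of generators of the three shapes $\mathcal{V}$, $\mathcal{D}$, $\mathcal{D}\circ\mathcal{V}$; each such generator is an aromatic tree $U$ carrying a single distinguished vertex $v$ — sitting either on the tree part of $U$, on the interior cycle of an aroma of $U$, or on a tree hanging off an aroma of $U$ — together with an aromatic-forest coefficient $B\in S(\mathcal{A})$. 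I would define
\[
\tau\bigl(B\otimes(v,U)\bigr)=B\cdot\kappa(v,U),
\]
where $\kappa(v,U)\in S(\mathcal{A})$ is obtained from $U$ by adjoining one edge from $v$ to the root of the tree part of $U$. When $v$ lies on the tree part this closes a cycle and turns $U$ into an aromatic forest with one extra aroma; on the $\mathcal{V}$-summand $\kappa$ is then exactly the projection $\tau\colon\mathcal{V}\to\mathcal{A}\cong\mathcal{V}/[\mathcal{V},\mathcal{V}]$ of the earlier Lemma. When $v$ lies on an aroma, the tree part of $U$ is grafted onto that aroma at $v$. With this definition $\Div=\tau\circ\delta$ sends a tree $t$ to $\sum_{v\in t}\kappa(v,t)$, the sum of the loop-aromas of $t$, as expected.

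For cyclicity, $\tau(\alpha\circ\beta)=\tau(\beta\circ\alpha)$, note first that it suffices to treat $\alpha,\beta$ single generators lying in $\mathcal{V}$ or $\mathcal{D}$: the displayed composition rules keep products of generators in the normal form above and are $S(\mathcal{A})$-linear, so cyclicity on such pairs propagates to one generator against an arbitrary element, and then to all of $E\ell_{S(\mathcal{A})}(\mathcal{AT})$ by telescoping. In each of the four resulting cases the $S(\mathcal{A})$-coefficients multiply commutatively and factor out. The case $\mathcal{V}\circ\mathcal{V}$ is $\kappa\bigl((v_1,t_1)\circ(v_2,t_2)\bigr)=\kappa\bigl((v_2,t_2)\circ(v_1,t_1)\bigr)$, which holds because the difference of the two compositions lies in $[\mathcal{V},\mathcal{V}]$ and $\kappa$ factors through $\mathcal{V}/[\mathcal{V},\mathcal{V}]$ — this is the only genuinely structural input, namely the Lemma identifying $\mathcal{A}$ with $\mathcal{V}/[\mathcal{V},\mathcal{V}]$. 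The other three cases are unwound directly: e.g. $\tau\bigl((\delta A_1 t_1)\circ(\delta A_2 t_2)\bigr)=(t_2\curvearrowright A_1)(t_1\curvearrowright A_2)$ and $\tau\bigl((\delta A_2 t_2)\circ(\delta A_1 t_1)\bigr)=(t_1\curvearrowright A_2)(t_2\curvearrowright A_1)$ coincide in $S(\mathcal{A})$, and the two mixed products both evaluate, after applying $\kappa$ to the formulas $(v,t_1)\circ(\delta A\,t_2)=\delta A(t_2\curvearrowright_v t_1)$ and $(\delta A\,t_2)\circ(v,t_1)=(v,t_1\curvearrowright A)t_2$, to $(t_2\curvearrowright_v t_1)\curvearrowright A$.

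For the second identity I would use that the extended connection acts on $\hom$ as the commutator $\triangledown_X(u)=[\triangledown_X,u]$ with the left-grafting operator $s\mapsto X\curvearrowright s$. A short computation then gives, for $\alpha=B\otimes(v,U)$,
\[
\triangledown_X\alpha=(\rho(X)B)\otimes(v,U)+B\otimes(v,\,X\curvearrowright U),\qquad X\curvearrowright U:=\sum_{w\in U}X\curvearrowright_w U,
\]
and one observes that neither the distinguished vertex $v$ nor the root of the tree part of $U$ is moved by this grafting. Since $\kappa$ only adjoins an edge and creates no new vertex, $\kappa(v,X\curvearrowright_w U)=X\curvearrowright_w\kappa(v,U)$ for each $w$, so applying $\tau$ yields $(\rho(X)B)\kappa(v,U)+B\bigl(X\curvearrowright\kappa(v,U)\bigr)$, which equals $\rho(X)\bigl(B\cdot\kappa(v,U)\bigr)=\rho(X)\tau(\alpha)$ by the Leibniz rule for the anchor on $S(\mathcal{A})$. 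The same computation applies verbatim when $v$ lies on an aroma of $U$.

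The only real obstacle is bookkeeping: one must confirm that the three-summand decomposition of $E\ell_{S(\mathcal{A})}(\mathcal{AT})$ is compatible with both composition and the extended connection, and that the grafting operations $\curvearrowright$ and $\curvearrowright_v$ on aromatic trees with a marked vertex commute in all the ways used above — a routine but fiddly diagram chase. The single non-formal ingredient is the identification $\mathcal{A}\cong\mathcal{V}/[\mathcal{V},\mathcal{V}]$ supplied by the preceding Lemma.
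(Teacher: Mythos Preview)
Your proposal is correct and follows exactly the approach of the cited source \cite{FloystadManchonMuntheKaas2020}: define $\tau$ on the three summands by closing the marked vertex to the root, verify cyclicity on the four generator-pair cases $\mathcal{V}\times\mathcal{V}$, $\mathcal{V}\times\mathcal{D}$, $\mathcal{D}\times\mathcal{V}$, $\mathcal{D}\times\mathcal{D}$ (with the $\mathcal{V}\times\mathcal{V}$ case using the identification $\mathcal{A}\cong\mathcal{V}/[\mathcal{V},\mathcal{V}]$), and check the module-map property via the Leibniz rule for grafting. The paper itself does not reproduce this argument for $\mathcal{AT}$ but simply cites it, and later invokes the same proof verbatim (``The proof of \cite[Lemma 3.11]{FloystadManchonMuntheKaas2020} applies'') for the post-Lie analogue.
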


\begin{theorem} \cite{FloystadManchonMuntheKaas2020}
	The pre-Lie-Rinehart algebra $(\mathcal{AT},S(\mathcal{A}))$ is the universal tracial pre-Lie-Rinehart algebra.
\end{theorem}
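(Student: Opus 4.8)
The plan is to verify the universal property directly: given any tracial pre-Lie-Rinehart algebra $(L,R)$ together with a distinguished element $x \in L$ generating it (in the appropriate sense), I must show there is a unique morphism of tracial pre-Lie-Rinehart algebras $\Phi: (\mathcal{AT}, S(\mathcal{A})) \to (L,R)$ sending the single-vertex tree $\bullet \in \mathcal{T} \subset \mathcal{AT}$ to $x$. A morphism here means a pair $(\Phi_L, \Phi_R)$, with $\Phi_L: \mathcal{AT} \to L$ linear, $\Phi_R: S(\mathcal{A}) \to R$ an algebra map, intertwining the anchors $\rho$, the connections $\triangledown$, and compatible with the trace $\tau$ and hence with $\Div$. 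First I would pin down what "generated by $x$" should mean on the $L$ side: the smallest sub-pre-Lie-Rinehart subalgebra containing $x$ and closed under $\Div$; the free object is then characterized by mapping onto any such generated algebra.

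The construction of $\Phi$ proceeds by induction on the number of vertices. On $\mathcal{T}$, the underlying free pre-Lie structure already forces $\Phi_L$ to be the Chapoton--Livernet B-series morphism: $\Phi_L(\bullet) = x$, and $\Phi_L(B \curvearrowright \bullet) = \triangledown_{\Phi_L(B)} x$ read off recursively from the grafting recursion, which is well-defined precisely because $\triangledown$ is flat and torsion-free (the pre-Lie identity). Next I extend to aromas: an aroma $A \in \mathcal{A}$ is, by the lemma identifying $\mathcal{A} \cong \mathcal{V}/[\mathcal{V},\mathcal{V}]$, the $\tau$-image of a marked tree $(v,t)$, which represents the elementary endomorphism $s \mapsto s \curvearrowright_v t$; this endomorphism lies in $E\ell_R(L)$ once we transport it via $\Phi_L$, so I set $\Phi_R(A) := \tau_L\big(\Phi_L^{\mathrm{el}}(v,t)\big)$ where $\Phi_L^{\mathrm{el}}$ sends the marked-tree endomorphism to the corresponding composition of $\triangledown$'s and $\delta$'s in $E\ell_R(L)$. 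The two defining trace axioms — cyclicity $\tau(\alpha\beta)=\tau(\beta\alpha)$ and $\tau(\triangledown_X\alpha)=\rho(X)\tau(\alpha)$ — are exactly what makes this independent of the choice of representative in $\mathcal{V}$ (that is what the quotient by $[\mathcal{V},\mathcal{V}]$ encodes) and what makes $\Phi_R$ well-defined. Then $\Phi_R$ extends multiplicatively to all of $S(\mathcal{A})$, and $\Phi_L$ extends to $\mathcal{AT} = S(\mathcal{A}) \otimes \mathcal{T}$ by $R$-linearity: $\Phi_L(\omega \otimes t) := \Phi_R(\omega)\cdot \Phi_L(t)$.

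With $\Phi = (\Phi_L,\Phi_R)$ so defined, the remaining work is to check it is genuinely a morphism of all the structure. I would verify in turn: $\Phi_R$ is an algebra homomorphism (immediate from multiplicative extension); $\Phi_L$ intertwines $\triangledown$, i.e. $\Phi_L(t_1 \curvearrowright t_2) = \triangledown_{\Phi_L(t_1)}\Phi_L(t_2)$ on all of $\mathcal{AT}$ — this is where the decomposition of $E\ell_{S(\mathcal{A})}(\mathcal{AT})$ into the three summands $S(\mathcal{A})\otimes\mathcal{V}$, $S(\mathcal{A})\otimes\mathcal{D}$, $S(\mathcal{A})\otimes(\mathcal{D}\circ\mathcal{V})$ is used: one checks the identity on each summand and on the three composition rules $D_c\circ V_c$, $V_c\circ D_c$, $D_c\circ D_c$ listed before the proposition; $\Phi$ intertwines the anchor $\rho$ (same grafting product, so this follows from the $\triangledown$ case); $\Phi$ is compatible with $\delta$ and with $\tau$, hence with $\Div = \tau\circ\delta$, which again reduces to the trace axioms in $L$; and finally the Lie bracket $\llbracket\cdot,\cdot\rrbracket$ is preserved since in a pre-Lie-Rinehart algebra it is the commutator of $\triangledown$. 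Uniqueness is then forced: any morphism must send $\bullet \mapsto x$, must therefore agree with $\Phi_L$ on $\mathcal{T}$ by the grafting recursion, must send each aroma to the prescribed trace by compatibility with $\tau$, and is $R$-linear, so it equals $\Phi$ everywhere.

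The main obstacle I anticipate is the well-definedness and morphism property of $\Phi_R$ on aromas: one must show that different marked-tree representatives $(v,t)$ and $(v',t')$ of the same aroma produce the same element of $R$, and that $\Phi_R$ respects the product structure coming from $S(\mathcal{A})$ together with the way aromas interact with trees under grafting (the mixed composition rules). This is precisely the point where the tracial hypothesis on $L$ is indispensable, and where I would lean most heavily on the lemma $\mathcal{A} \cong \mathcal{V}/[\mathcal{V},\mathcal{V}]$ and on the structure of $E\ell_{S(\mathcal{A})}(\mathcal{AT})$ from the proposition above; the rest of the verification is a bookkeeping induction on vertex count that parallels the pre-Lie (B-series) case.
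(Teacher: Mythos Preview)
The paper does not prove this theorem itself; it is cited from \cite{FloystadManchonMuntheKaas2020}, and the paper only recalls the seven-step generating construction (build $\mathcal{T}$ via free pre-Lie, then $\mathcal{V}$, then $\mathcal{A}=\tau(\mathcal{V})$, then $\mathcal{D}$, $\mathcal{D}\circ\mathcal{V}$, and finally iterate the module action). Your proposal follows exactly this pattern and matches term-for-term the detailed proof the paper later gives for the post-Lie analogue (construct $\zeta$ on trees via freeness, then $\beta$ on marked trees, then $\gamma$ on aromas via $\overline{\tau}\circ\overline{\beta}$ using $\mathcal{A}\cong\mathcal{V}/[\mathcal{V},\mathcal{V}]$, extend multiplicatively and by module action, then verify), so your approach is correct and is essentially the same as the one the paper relies on.
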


The pre-Lie algebra $(\mathcal{AT},S(\mathcal{A}))$ being universal does in particular mean that, starting from the single-vertex trees, one can generate all of $\mathcal{AT}$ and all of $S(\mathcal{A})$ by using the pre-Lie grafting product $\curvearrowright$, the anchor map $\rho$, the map $\delta$, composition $\circ$ in  $E\ell_{S(\mathcal{A})}(\mathcal{AT})$, the trace map $\tau$ and the module action of $S(\mathcal{A})$ onto $\mathcal{AT}$ and $E\ell_{S(\mathcal{A})}(\mathcal{AT})$. The construction for how to generate $\mathcal{AT}$ and all of $S(\mathcal{A})$ using these operations can be found in \cite{FloystadManchonMuntheKaas2020}, although spread out across multiple proofs. We will recall the construction here. We build up the spaces in a few steps:
\begin{enumerate}
	\item The space $\mathcal{T}$ is the free pre-Lie algebra, hence we can generate it by using the single-vertex trees and the product $\curvearrowright$.
	\item By using $\mathcal{T}$, we can generate $\mathcal{V}$. This is described in the proof of \cite[Lemma 3.4]{FloystadManchonMuntheKaas2020}, by an induction over the number of vertices. As a base case, the single-vertex tree $(d,\forestWB)$ is the image of $\forestXB$ under the map $\delta$. Assume that we can generate all trees with at most $N$ vertices, and let $(v,t) \in \mathcal{V}$ have $N+1$ vertices. If $v$ is not the root of $t$, then it is attached to a vertex $w$. Let $t_v$ be the subtree of $t$ that has $v$ as a root, and let $t'$ be the tree obtained by removing $t_v$ from $t$. Then $(v,t)=(w,t')\circ (v,t_v)$ and $(v,t)$ can be generated by induction. If $v$ is the root of $t$, then $(v,t)=\delta(t) - \sum_{w \neq v} (w,t)$ and we can generate the right side of the equation, hence also $(v,t)$.
	\item From having generated all of $\mathcal{V}$, we can generate all of $\mathcal{A}$ as the image of $\mathcal{V}$ under the map $\tau$.
	\item By using the module action of $S(\mathcal{A})$, we can now generate all aromatic trees with at most one aroma as well as $\mathcal{A} \otimes \mathcal{V}$.
	\item Let $\delta(A)t \in \mathcal{D}$ be arbitrary, then $\delta(A)t= \delta(At)-A\delta(t)$. We can generate the terms in the right side of the equation, hence also all of $\mathcal{D}$.
	\item If we can generate all of $\mathcal{V}$ and all of $\mathcal{D}$, we can compose to generate all of $\mathcal{D} \circ \mathcal{V}$.
	\item Because we can generate all of $\mathcal{A}$, we can now inductively use the module action to build aromatic trees and multi-aromas with increasing number of aromas and hence generate the full spaces.
\end{enumerate}

Having abstractly described the construction, we now want to illustrate it with an example.

\begin{example} \label{example::AromaticTreeGeneration}
	We want to describe the aromatic tree
	\begin{align*}
		A_1 A_2 t = \forestYB\forestAC\forestBC
	\end{align*}
	in terms of generators and operators. As a first step, we separate the aromatic tree into a tree-part and a multi-aroma part. $A_1A_2t$ is the module action of the multi-aroma $A_1A_2$ onto the tree $t$. The tree can be written as
	\begin{align*}
		t= \forestCC \curvearrowright \forestDC.
	\end{align*}
	We then need to describe the multi-aroma $A_1A_2$ in terms of generators and operators. For this, we need to express $A_1A_2$ as the image of something in $E\ell_{S(\mathcal{A})}(\mathcal{AT})$ under the map $\tau$. As $\tau$ is not injective, this choice is not unique. Due to the universality property, different choices here will lead to different but equal expressions in terms of the generators. We can choose any edge in $A_1A_2$ to be the edge added by the map $\tau$, we choose the edge from $\forestEC$ to itself. Then we can write
	\begin{align*}
		A_1A_2 =& \tau\bigl( \forestFC (f,\forestGC) \bigr)\\
		=& \tau\bigl( \forestHC \delta(\forestIC) \bigr). 
	\end{align*}
	It then remains to express the aroma $A_1$ in terms of generators. We have to express $A_1$ as the image of something under the map $\tau$, and we choose the edge between vertices $c$ and $a$ to be added by this map, so that:
	\begin{align*}
		A_1 = \tau\bigl(  (a, \forestJC )  \bigr).
	\end{align*}
	We then have to use the induction from point $2$ above to express $(a, \forestKC)$ in terms of generators:
	\begin{align*}
		(a, \forestLC ) =& (a,\forestMC) \circ (b,\forestNC)\\
		=&(a,\forestOC) \circ (b,\forestPC) \circ (c,\forestQC)\\
		=&\bigl(\delta(\forestRC) - (e,\forestSC)  \bigr) \circ \bigl(\delta(\forestTC) - (e,\forestUC)  \bigr) \circ \delta(\forestVC) \\
		=& \bigl(\delta(\forestWC) - (d,\forestXC) \circ  (a,\forestYC) \bigr) \circ \bigl(\delta(\forestAD) - (e,\forestBD)\circ (b,\forestCD)  \bigr) \circ \delta(\forestDD) \\
		=&\bigl(  \delta(\forestED\curvearrowright \forestFD ) - \delta(\forestGD) \circ \delta(\forestHD) \bigr) \circ \bigl(  \delta(\forestID\curvearrowright \forestJD ) - \delta(\forestKD) \circ \delta(\forestLD) \bigr) \circ \delta(\forestMD).
	\end{align*}
	In total we get:
	\begin{align*}
		A_1A_2t = \tau\Biggl( \tau \Bigl( \bigl(  \delta(\forestND\curvearrowright \forestOD ) - \delta(\forestPD) \circ \delta(\forestQD) \bigr) \circ \bigl(  \delta(\forestRD\curvearrowright \forestSD ) - \delta(\forestTD) \circ \delta(\forestUD) \bigr) \circ \delta(\forestVD) \Bigr) \delta(\forestWD)  \Biggr)(\forestXD \curvearrowright \forestYD).
	\end{align*}
\end{example}

\subsection{Post-Lie algebras}

A post-Lie algebra $(\mathfrak{g},[\cdot,\cdot],\graft)$ is a Lie algebra $(\mathfrak{g},[\cdot,\cdot])$ together with a second product $\graft: \mathfrak{g} \otimes \mathfrak{g} \to \mathfrak{g}$ satisfying the two following conditions for all $X,Y,Z \in \mathfrak{g}$:
\begin{align}
	X \graft [Y,Z]=& [X \graft Y,Z] + [Y,X \graft Z],\label{eq::PostLie1} \\
	[X,Y] \graft Z =& X \graft (Y \graft Z) - (X \graft Y)\graft Z - Y \graft (X \graft Z) + (Y \graft X) \graft Z. \label{eq::PostLie2}
\end{align}
A post-Lie algebra with abelian Lie bracket is pre-Lie, as one recovers the pre-Lie identity by setting the Lie brackets to zero in the above identities. One of the motivating examples for the definition of post-Lie algebras, is the space of vector fields on a homogeneous space. Let $\mathcal{X}(M)$ denote the space of vector fields on the manifold $M$, equipped with a connection $\triangledown$. The connection $\triangledown$ defines a product $\graft$ on $\mathcal{X}(M)$ by $X \graft Y := \triangledown_X Y$. Recall the torsion $\mathcal{T}$ and the curvature $\mathcal{R}$ given by: 
\begin{align}
	\mathcal{T}(X,Y):=&\triangledown_XY - \triangledown_YX - \llbracket X,Y \rrbracket, \label{eq::Torsion} \\
	\mathcal{R}(X,Y,Z):=& \triangledown_X (\triangledown_Y Z) - \triangledown_Y (\triangledown_X Z ) - \triangledown_{\llbracket X,Y \rrbracket } Z, \label{eq::Curvature}
\end{align}
where the Lie bracket $\llbracket \cdot , \cdot \rrbracket$ is the Jacobi bracket of vector fields. In the case that the connection is flat and has constant torsion, meaning $\mathcal{R} = 0 = \triangledown \mathcal{T}$, then $(\mathcal{X}(M),-\mathcal{T}(\cdot,\cdot), \graft )$ is a post-Lie algebra.

A post-Lie algebra always comes equipped with two Lie brackets. Let $(\mathfrak{g},[\cdot,\cdot],\graft)$ be a post-Lie algebra, then the product $\llbracket X,Y \rrbracket := X \graft Y - Y \graft X + [X,Y]$ is always a second Lie bracket. In the homogeneous space example above, this is the relation between torsion and the Jacobi bracket. One could in fact view a post-Lie algebra as a structure defined a relation between two different Lie algebras over the same space. Indeed instead of starting with $(\mathfrak{g},[\cdot,\cdot],\graft)$ and requiring the identities \eqref{eq::PostLie1}-\eqref{eq::PostLie2}, one could equivalently start with the triple $(\mathfrak{g},\llbracket\cdot,\cdot \rrbracket,\graft)$ then define torsion and curvature via equations \eqref{eq::Torsion}-\eqref{eq::Curvature}. One then ends up with a post-Lie algebra $(\mathfrak{g},-\mathcal{T}(\cdot,\cdot)=[\cdot,\cdot],\graft)$ if and only if the curvature is zero and the torsion is constant.

Post-Lie algebras always comes in pairs. Let $(\mathfrak{g},[\cdot,\cdot],\graft)$ be a post-Lie algebra and define the \textit{sub-adjacent} post-Lie product $X \bgraft Y := X \graft Y + [X,Y]$ on $\mathfrak{g}$, then $(\mathfrak{g},-[\cdot,\cdot],\bgraft)$ is also a post-Lie algebra. The two post-Lie algebras generate the same second Lie bracket $\llbracket \cdot,\cdot \rrbracket$.

\subsection{The free post-Lie algebra}

Let $PT$ denote the set of \textit{planar rooted trees}. These are rooted trees endowed with an embedding into the plane, meaning that for each vertex there is an ordering on the edges outgoing from that vertex. For example:
\begin{align*}
	\forestAE \neq \forestBE,
\end{align*}
as the ordering of the edges outgoing from the root are different in the two trees. The vector space spanned by $PT$ is denoted $\mathcal{PT}$, and we write $Lie(PT)$ for the free Lie algebra generated by $\mathcal{PT}$. We now define the \textit{left grafting product} $\graft : \mathcal{PT} \otimes \mathcal{PT} \to \mathcal{PT}$, by letting $t_1 \graft t_2$ be the sum over all trees obtained by adding a leftmost edge from some vertex of $t_2$ to the root of $t_1$. For example:
\begin{align*}
	\forestCE \graft \forestDE = \forestEE + \forestFE + \forestGE+\forestHE + \forestIE+\forestJE.
\end{align*}
The left grafting product is now extended to a product $\graft : Lie(PT) \otimes Lie(PT) \to Lie(PT)$ by using the post-Lie axioms:
\begin{align*}
	t_1 \graft [t_2,t_3] = & [t_1 \graft t_2,t_3] + [t_2,t_1 \graft t_3], \\
	[t_1,t_2] \graft t_3 = & t_1 \graft (t_2 \graft t_3) - (t_1 \graft t_2) \graft t_3 - t_2 \graft (t_1 \graft t_3) + (t_2 \graft t_1) \graft t_3.
\end{align*}
The second axiom can be interpreted by identifying $[t_1,t_2]$ with the formal commutator $t_1t_2 - t_2t_1$, which is then grafted onto each vertex of $t_3$. For example:
\begin{align*}
	[\forestKE,\forestLE] \graft \forestME =& \forestNE - \forestOE + \forestPE - \forestQE + \forestRE - \forestSE,
\end{align*}
where we note that $t_1,t_2$ are always grafted together onto the same vertex, with the term $t_1t_2$ having positive sign and the term $t_2t_1$ having negative sign.

\begin{theorem} \cite{MuntheKaasLundervold2012}
	$(Lie(PT),[\cdot,\cdot],\graft)$ is the free post-Lie algebra.
\end{theorem}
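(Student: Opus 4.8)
\emph{Plan.} There are two things to prove: that $(Lie(PT),[\cdot,\cdot],\graft)$ is a post-Lie algebra (in particular that the extension of left grafting past the tree slot is well defined), and that it is universal, i.e. for every post-Lie algebra $(\mathfrak{g},[\cdot,\cdot]_{\mathfrak{g}},\graft_{\mathfrak{g}})$ and every $x\in\mathfrak{g}$ there is a unique post-Lie morphism $F\colon Lie(PT)\to\mathfrak{g}$ sending the one-vertex tree $\bullet$ to $x$. I would split the argument into three stages: (i) well-definedness of $\graft$ on $Lie(PT)$ together with \eqref{eq::PostLie1}--\eqref{eq::PostLie2}; (ii) uniqueness of $F$; (iii) existence of $F$.

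Stage (i) carries the real content. Grade $Lie(PT)$ by number of vertices, so all graded pieces are finite dimensional and both products are homogeneous; also record the tree-weight decomposition $Lie(PT)=\bigoplus_{p\ge1}L_{p}$, where $L_{p}$ is spanned by iterated brackets of $p$ planar trees. First extend $\graft$ to $\mathcal{PT}\otimes Lie(PT)\to Lie(PT)$: since $Lie(PT)$ is free as a Lie algebra on $\mathcal{PT}$, for each tree $t$ the map $s\mapsto t\graft s$ on generators extends uniquely to a derivation of the bracket, and we take that as $t\graft(-)$; this makes \eqref{eq::PostLie1} hold whenever the left argument lies in $L_{1}$. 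Next extend the left slot by induction on tree-weight, using \eqref{eq::PostLie2} to read off $[X,Y]\graft(-)$ from $X\graft(-)$, $Y\graft(-)$ and the actions $(X\graft Y)\graft(-)$, $(Y\graft X)\graft(-)$; one checks along the way that $\mathrm{tw}(X\graft Y)\le\mathrm{tw}(X)+\mathrm{tw}(Y)-1$, which makes the recursion well-founded. The point requiring proof is that this recipe is consistent with the defining relations of the free Lie algebra in the left slot: antisymmetry of the right-hand side of \eqref{eq::PostLie2} in $(X,Y)$ is immediate, but Jacobi-compatibility, and compatibility with the derivation rule from the first step, are a genuine multilinear computation. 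I expect this to be \textbf{the main obstacle}. The cleanest way to discharge it is not to work in $Lie(PT)$ directly: extend $\graft$ to the tensor algebra $T(\mathcal{PT})=U(Lie(PT))$ of ordered planar forests by the Guin--Oudom recursion, where there are no Lie relations to respect, recognize the associated ``composition'' product as the Grossman--Larson product on planar forests, and read off consistency from its associativity; restricting to the primitives $Lie(PT)$ then gives a well-defined post-Lie structure. With $\graft$ in hand, \eqref{eq::PostLie2} holds by construction and \eqref{eq::PostLie1} propagates from $L_{1}$ to all of $Lie(PT)$ by a short induction on tree-weight via \eqref{eq::PostLie2}.

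For stage (ii), write $B^{+}_{k}(t_{1},\dots,t_{k})$ for the planar tree formed by joining $t_{1},\dots,t_{k}$, in that order, under a new root (so $B^{+}_{0}=\bullet$). Splitting the sum defining $t_{1}\graft B^{+}_{k-1}(t_{2},\dots,t_{k})$ according to whether the new edge meets the root or some $t_{j}$ gives the identity
\begin{equation*}
 t_{1}\graft B^{+}_{k-1}(t_{2},\dots,t_{k})=B^{+}_{k}(t_{1},\dots,t_{k})+\sum_{j=2}^{k}B^{+}_{k-1}\bigl(t_{2},\dots,t_{j-1},\,t_{1}\graft t_{j},\,t_{j+1},\dots,t_{k}\bigr)
\end{equation*}
in $\mathcal{PT}$. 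A double induction (outer on the number of vertices, inner on $k$) using this identity shows that every $B^{+}_{k}(t_{1},\dots,t_{k})$, hence every planar tree, lies in the $\graft$-subalgebra $W$ of $Lie(PT)$ generated by $\bullet$. Since $Lie(PT)$ is generated as a Lie algebra by $\mathcal{PT}\subseteq W$, the post-Lie subalgebra generated by $\bullet$ is all of $Lie(PT)$; as a post-Lie morphism is linear and respects $\graft$ and $[\cdot,\cdot]$, it is determined by its value on $\bullet$, which gives uniqueness.

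For stage (iii), use the same identity to \emph{define} $F$ on $\mathcal{PT}$ by the deterministic recursion $F(\bullet)=x$ and
\begin{equation*}
 F\bigl(B^{+}_{k}(t_{1},\dots,t_{k})\bigr)=F(t_{1})\graft_{\mathfrak{g}}F\bigl(B^{+}_{k-1}(t_{2},\dots,t_{k})\bigr)-\sum_{j=2}^{k}F\bigl(B^{+}_{k-1}(t_{2},\dots,t_{j-1},t_{1}\graft t_{j},t_{j+1},\dots,t_{k})\bigr)
\end{equation*}
for $k\ge1$ (extended linearly wherever a grafting of trees occurs), which terminates by the same double induction; since the branch decomposition of a planar tree is unique there is nothing to reconcile. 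Extend $F$ to $Lie(PT)\to\mathfrak{g}$ by freeness of the free Lie algebra, so $F$ is automatically a morphism of Lie algebras. Unwinding the recursion against the vertex-splitting identity shows at once that $F(t_{1}\graft t_{2})=F(t_{1})\graft_{\mathfrak{g}}F(t_{2})$ for all planar trees $t_{1},t_{2}$. Finally, $\graft$-compatibility passes from trees to all of $Lie(PT)$ by induction on $\mathrm{tw}(a)+\mathrm{tw}(b)$: when $b$ is not a tree, write $b=[b_{1},b_{2}]$ and use \eqref{eq::PostLie1} in both $Lie(PT)$ and $\mathfrak{g}$ with the inductive hypothesis for $a\graft b_{1},a\graft b_{2}$; when $b$ is a tree but $a=[a_{1},a_{2}]$ is not, use \eqref{eq::PostLie2} in both algebras, the relevant pieces having strictly smaller tree-weight because $\mathrm{tw}(a_{1}\graft a_{2})<\mathrm{tw}([a_{1},a_{2}])$. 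This shows $F$ is a post-Lie morphism and finishes the proof.
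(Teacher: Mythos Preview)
The paper does not prove this theorem; it is quoted from \cite{MuntheKaasLundervold2012} and used as a black box, so there is no in-paper argument to compare against. Your proposal is therefore not a variant of the paper's proof but a self-contained reconstruction, and as such it is broadly sound: the $B^{+}_{k}$ recursion in stages~(ii) and~(iii) is the standard device, the double induction is correctly set up, and the passage from trees to all of $Lie(PT)$ via \eqref{eq::PostLie1}--\eqref{eq::PostLie2} and the tree-weight bound is fine.

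One point in stage~(i) deserves care. You appeal to the Guin--Oudom recursion on $T(\mathcal{PT})$ and to associativity of the resulting Grossman--Larson product in order to conclude that the post-Lie axioms hold on $Lie(PT)$. As stated this is circular: the Guin--Oudom construction and the associativity of $\ast$ (as presented in this paper's Section on the Guin--Oudom extension) take a post-Lie algebra as \emph{input}. What actually works, and what you presumably intend, is to define left grafting and the planar Grossman--Larson product on ordered forests \emph{combinatorially} (left-graft each tree of the left forest independently onto the right forest), prove associativity of that product by a direct bijection of terms, and then restrict to primitives. That is legitimate and is essentially how the original reference proceeds; just make explicit that the associativity is established combinatorially on planar forests rather than imported from the abstract Guin--Oudom theorem.
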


\subsection{The Guin-Oudom extension}

Let $(\mathfrak{g},[\cdot,\cdot],\graft)$ be a post-Lie algebra and denote by $U(\mathfrak{g})$ the universal enveloping algebra of the Lie algebra $(\mathfrak{g},[\cdot,\cdot])$. In the special case when the post-Lie algebra is pre-Lie, an extension of the pre-Lie product to $U(\mathfrak{g})$ was constructed by Guin and Oudom in \cite{OudomGuin2008}. The construction was generalized to the post-Lie setting in \cite{EbrahimifardLundervoldMunthekaas2014}, and will be recalled in this section.

For any Lie algebra $\mathfrak{g}$, we have that $U(\mathfrak{g})$ forms a Hopf algebra with concatenation as product and deshuffle as coproduct. The deshuffle coproduct $\Delta_{\shuffle}$ is defined by $\Delta_{\shuffle}(x)=x \otimes 1 + 1 \otimes x$ for $x \in \mathfrak{g}$ an extended to $U(\mathfrak{g})$ as an algebra morphism. For a general element $A \in U(\mathfrak{g})$, we shall use Sweedler's notation $\Delta_{\shuffle}(A) =: A_{(1)} \otimes A_{(2)}$. Now let $A,B,C \in U(\mathfrak{g})$ and let $x,y \in \mathfrak{g}$, then there exists a unique way to extend $\graft$ from $\mathfrak{g}$ to $U(\mathfrak{g})$ given by:
\begin{align*}
	1 \graft A =& A, \\
	xA \graft Y =& x \graft (A \graft y) - (x \graft A) \graft y, \\
	A \graft BC =& (A_{(1)} \graft B)(A_{(2)} \graft C).
\end{align*}
One can now define the \textit{Grossman--Larson} product $\ast$ on $U(\mathfrak{g})$ by
\begin{align*}
	A \ast B := A_{(1)}(A_{(2)} \graft B).
\end{align*}
This product is associative, and can be used to form a second Hopf algebra $(U(\mathfrak{g}),\ast,\Delta_{\shuffle})$ on $U(\mathfrak{g})$. This second Hopf algebra is isomorphic to the universal enveloping algebra of the second Lie algebra $(\mathfrak{g},\llbracket \cdot,\cdot \rrbracket )$.

\subsection{Post-Lie Algebroids} \label{ssect::PostLieAlgebroid}

A \textit{Lie algebroid} is a special case of a Lie-Rinehart algebra, obtained by considering the smooth $\mathbb{R}$-valued functions over a smooth manifold $\mathcal{M}$. The notion of a post-Lie algebroid was considered in \cite{Munthe-KaasLundervold2012}, as a Lie algebroid that is also a post-Lie algebra. Post-Lie algebroids will be an important example of what we later define to be post-Lie-Rinehart algebras. In this section we will recall some basic theory on (post-)Lie algebroids and describe the important example of a post-Lie algebroid on a homogeneous space.

A Lie algebroid is a triple $(E,\llbracket \cdot,\cdot \rrbracket_E, \rho_{\ast})$, where $E$ is a vector bundle over the smooth manifold $\mathcal{M}$, $\llbracket \cdot, \cdot \rrbracket_E$ is a Lie bracket on the module of sections $\Gamma(E)$ and $\rho_{\ast}: E \to T\mathcal{M}$ is a morphism of vector bundles called the \textit{anchor}. We require the bracket and the anchor to satisfy the Leibniz rule
\begin{align*}
	\llbracket x, \phi y \rrbracket_E =& \phi \llbracket x, y \rrbracket_E + (\rho_{\ast} \circ x)(\phi)y,
\end{align*}
where $x,y \in \Gamma(E), \phi \in C^{\infty}(\mathcal{M},\mathbb{R})$ and $(\rho_{\ast} \circ x)(\phi)$ is the Lie derivative of $\phi$ along the vector field $\rho_{\ast} \circ x$. This implies that $\rho_{\ast}$ is a Lie algebra morphism sending $\llbracket \cdot, \cdot \rrbracket_E$ to the Jacobi-Lie bracket of vector fields. It is now straightforward to see that $\Gamma(E)$ is a Lie-Rinehart algebra over the ring $C^{\infty}(\mathcal{M},\mathbb{R})$. A product $\graft: \Gamma(E) \times \Gamma(E) \to \Gamma(E)$ is called a \textit{linear connection} if it satisfies
\begin{align*}
	(\phi x)\graft y =& \phi(x \graft y), \\
	x \graft (\phi y)=& (\rho_{\ast} \circ x)(\phi)y,
\end{align*}
and we say that a Lie bracket $[\cdot,\cdot ]$ is tensorial if it satisfies
\begin{align*}
	[x,\phi y]= [\phi x, y ] = \phi [x,y].
\end{align*}
The authors in \cite{Munthe-KaasLundervold2012} then defines a post-Lie algebroid as a four-tuple $(E,[\cdot,\cdot],\graft,\rho_{\ast})$ such that $E \to \mathcal{M}$ is a vector bundle, $\rho_{\ast}$ is a morphism of vector bundles, $[\cdot,\cdot]$ is tensorial, $\graft$ is a linear connection and $(\Gamma(E),[\cdot,\cdot],\graft)$ is a post-Lie algebra. A post-Lie algebroid is then indeed also a Lie algebroid.
\begin{proposition}\cite{Munthe-KaasLundervold2012}
	Let $(E,[\cdot,\cdot],\graft,\rho_{\ast})$ be a post-Lie algebroid and define $\llbracket x,y \rrbracket_E := x \graft y - y \graft x + [x,y]$. Then $(E,\llbracket\cdot,\cdot \rrbracket_E, \rho_{\ast} )$ is a Lie algebroid.
\end{proposition}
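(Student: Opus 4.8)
The plan is to verify directly that $(\Gamma(E),\llbracket\cdot,\cdot\rrbracket_E,\rho_*)$ is a Lie-Rinehart algebra over $R:=C^\infty(\mathcal{M},\mathbb{R})$; since $E\to\mathcal{M}$ is a vector bundle and $\rho_*$ is a morphism of vector bundles, this is exactly what it means for $(E,\llbracket\cdot,\cdot\rrbracket_E,\rho_*)$ to be a Lie algebroid. Three conditions must be established: (a) $\llbracket\cdot,\cdot\rrbracket_E$ is an $\mathbb{R}$-bilinear Lie bracket on $\Gamma(E)$; (b) the Leibniz identity $\llbracket x,\phi y\rrbracket_E=\phi\llbracket x,y\rrbracket_E+(\rho_*\circ x)(\phi)\,y$; and (c) that $\rho_*$ is a morphism of Lie algebras, $\rho_*\llbracket x,y\rrbracket_E=[\rho_*\circ x,\rho_*\circ y]$ for the Jacobi bracket of vector fields. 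I would take these in turn.

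For (a) there is nothing to do beyond invoking the post-Lie axioms: by hypothesis $(\Gamma(E),[\cdot,\cdot],\graft)$ is a post-Lie algebra, and $\llbracket x,y\rrbracket_E=x\graft y-y\graft x+[x,y]$ is precisely the second Lie bracket that every post-Lie algebra carries, which was recalled above to satisfy antisymmetry and the Jacobi identity. It is $\mathbb{R}$-bilinear because a linear connection is $\mathbb{R}$-bilinear and a tensorial bracket is in particular $\mathbb{R}$-bilinear.

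For (b) I would simply expand, using that $\graft$ is a linear connection — so $(\phi y)\graft x=\phi(y\graft x)$ and, by the Leibniz rule of a connection, $x\graft(\phi y)=(\rho_*\circ x)(\phi)\,y+\phi(x\graft y)$ — together with tensoriality of $[\cdot,\cdot]$, that is $[x,\phi y]=\phi[x,y]$:
\[
\llbracket x,\phi y\rrbracket_E=x\graft(\phi y)-(\phi y)\graft x+[x,\phi y]=(\rho_*\circ x)(\phi)\,y+\phi\bigl(x\graft y-y\graft x+[x,y]\bigr),
\]
which is the claimed identity.

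The real content is (c), since being a Lie-algebra morphism is not among the defining axioms of a post-Lie algebroid and must be deduced. Here the plan is to substitute $w=\phi z$ into the Jacobi identity $\llbracket x,\llbracket y,w\rrbracket_E\rrbracket_E=\llbracket\llbracket x,y\rrbracket_E,w\rrbracket_E+\llbracket y,\llbracket x,w\rrbracket_E\rrbracket_E$ and expand every sub-bracket of the form $\llbracket\,\cdot\,,\phi(\cdot)\rrbracket_E$ via the Leibniz rule from (b). The terms carrying an overall scalar factor $\phi$ recombine into $\phi$ times the Jacobi identity for $\llbracket\cdot,\cdot\rrbracket_E$, hence vanish; the mixed terms $(\rho_*\circ x)(\phi)\llbracket y,z\rrbracket_E$ and $(\rho_*\circ y)(\phi)\llbracket x,z\rrbracket_E$ occur on both sides and cancel; what remains is
\[
\Bigl((\rho_*\circ x)\bigl((\rho_*\circ y)(\phi)\bigr)-(\rho_*\circ y)\bigl((\rho_*\circ x)(\phi)\bigr)-\bigl(\rho_*\llbracket x,y\rrbracket_E\bigr)(\phi)\Bigr)\,z=0.
\]
Since this holds for every section $z$, the coefficient function must vanish for all $\phi$, i.e.\ $\rho_*\llbracket x,y\rrbracket_E$ and $[\rho_*\circ x,\rho_*\circ y]$ induce the same derivation of $R$; as a vector field is determined by its action on functions, they are equal. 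With (a), (b), (c) in hand, $(\Gamma(E),\llbracket\cdot,\cdot\rrbracket_E,\rho_*)$ satisfies every axiom of a Lie-Rinehart algebra over $R$, and is therefore a Lie algebroid. The only subtlety — passing from ``$(\cdots)\,z=0$ for all $z$'' to ``$(\cdots)=0$'' in step (c) — is harmless, since a function killing every section of $E$ vanishes (either $E$ has positive rank and admits, near any point, a section nonzero there, or $\Gamma(E)=0$ and the proposition is vacuous).
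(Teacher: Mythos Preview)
Your argument is correct. Note, however, that the paper does not supply its own proof of this proposition: it is quoted from \cite{Munthe-KaasLundervold2012} and stated without proof, so there is no in-paper argument to compare against. One small observation on scope: in the paper's convention a Lie algebroid is defined by requiring only the Leibniz rule for the anchor, and the paper explicitly remarks that the Lie-algebra-morphism property of $\rho_*$ is then a \emph{consequence} of that axiom. Hence, strictly speaking, your steps (a) and (b) already establish the proposition under this definition, and your step (c) is the general derivation of that consequence rather than an independent verification required for the result. Including it does no harm and makes the proof self-contained for readers who take the anchor-morphism condition as part of the definition.
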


And conversely.

\begin{proposition} \cite{Munthe-KaasLundervold2012}
	Let $(E, \llbracket \cdot,\cdot \rrbracket_E, \rho_{\ast} )$ be Lie algebroid and suppose that $\Gamma(E)$ has linear connection $\graft$ with zero curvature and constant torsion, then $(E, [\cdot,\cdot],\graft \rho_{\ast})$ is a post-Lie algebroid with Lie bracket $[x,y] := -T(x,y) = \llbracket x,y \rrbracket_E - x \graft y + y \graft x$.
\end{proposition}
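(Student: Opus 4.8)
The strategy is to verify the two parts of the definition of a post-Lie algebroid that are not already built into the hypotheses, namely that $[\cdot,\cdot]=-T(\cdot,\cdot)$ is tensorial and that $(\Gamma(E),[\cdot,\cdot],\graft)$ is a post-Lie algebra; the remaining requirements ($E\to\mathcal{M}$ a vector bundle, $\rho_*$ a bundle morphism, $\graft$ a linear connection) are assumed. Throughout I write $\triangledown_x y:=x\graft y$, so the torsion is $T(x,y)=x\graft y-y\graft x-\llbracket x,y\rrbracket_E$ and the curvature is $\mathcal{R}(x,y,z)=x\graft(y\graft z)-y\graft(x\graft z)-\llbracket x,y\rrbracket_E\graft z$; the hypothesis reads $\mathcal{R}=0$ and $\triangledown T=0$ (the torsion tensor is parallel).

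First I would settle the elementary points. Antisymmetry of $[\cdot,\cdot]$ is immediate from the antisymmetry of $T$. Tensoriality follows by expanding $[\phi x,y]$ and $[x,\phi y]$ using the Leibniz rule $\llbracket x,\phi y\rrbracket_E=\phi\llbracket x,y\rrbracket_E+(\rho_*\circ x)(\phi)y$ for the algebroid bracket together with the Leibniz-type identities defining the linear connection $\graft$; the anchor-derivative terms cancel pairwise, leaving $[\phi x,y]=[x,\phi y]=\phi[x,y]$. This is a short direct computation.

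The substance is that $(\Gamma(E),[\cdot,\cdot],\graft)$ satisfies the post-Lie axioms and that $[\cdot,\cdot]$ obeys the Jacobi identity. Axiom \eqref{eq::PostLie1} is exactly the condition $\triangledown T=0$ written out: unwinding $(\triangledown_x T)(y,z)=x\graft T(y,z)-T(x\graft y,z)-T(y,x\graft z)=0$ and using $T=-[\cdot,\cdot]$ gives $x\graft[y,z]=[x\graft y,z]+[y,x\graft z]$. For axiom \eqref{eq::PostLie2} I would start from its right-hand side: flatness gives $x\graft(y\graft z)-y\graft(x\graft z)=\llbracket x,y\rrbracket_E\graft z$, and substituting $\llbracket x,y\rrbracket_E=x\graft y-y\graft x+[x,y]$ collapses the right-hand side to $[x,y]\graft z$, which is the left-hand side. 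So both axioms drop out of the two vanishing conditions, which is exactly the content of the remark following \eqref{eq::Curvature}, here transported to the algebroid setting.

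The one genuine computation — and the step I expect to be the main obstacle — is the Jacobi identity for $[\cdot,\cdot]$. I would expand the Jacobiator $\sum_{\mathrm{cyc}}\bigl[[x,y],z\bigr]$ using $[a,b]=\llbracket a,b\rrbracket_E-a\graft b+b\graft a$: the purely $\llbracket\cdot,\cdot\rrbracket_E$ part vanishes by the Jacobi identity of the Lie-algebroid bracket, and the remaining mixed terms must be eliminated by systematically invoking $\mathcal{R}=0$ and $\triangledown T=0$ (equivalently, the axioms \eqref{eq::PostLie1}--\eqref{eq::PostLie2} just established, together with antisymmetry of $[\cdot,\cdot]$ and the relation $\llbracket x,y\rrbracket_E=x\graft y-y\graft x+[x,y]$). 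The difficulty is purely bookkeeping: tracking which of the two conditions is used at each cancellation and checking that the tensoriality and connection identities keep everything well defined over $C^\infty(\mathcal{M},\mathbb{R})$ rather than merely pointwise. Alternatively one may simply quote the equivalence of the $[\cdot,\cdot]$- and $\llbracket\cdot,\cdot\rrbracket$-formulations of a post-Lie algebra stated in the text, since the checks above already guarantee compatibility with the $C^\infty(\mathcal{M},\mathbb{R})$-module structure. Once Jacobi holds, $(\Gamma(E),[\cdot,\cdot],\graft)$ is a post-Lie algebra, and combined with tensoriality we conclude that $(E,[\cdot,\cdot],\graft,\rho_*)$ is a post-Lie algebroid.
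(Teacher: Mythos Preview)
The paper does not actually prove this proposition: it is stated as a cited result from \cite{Munthe-KaasLundervold2012} with no accompanying argument. So there is no paper proof to compare against, only your proposal to assess on its own.

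Your outline is correct and follows the natural route. The tensoriality check and the two post-Lie axioms are exactly as you describe: axiom \eqref{eq::PostLie1} is literally $\triangledown T=0$, and axiom \eqref{eq::PostLie2} collapses after substituting $\llbracket x,y\rrbracket_E=x\graft y-y\graft x+[x,y]$ into the flatness relation. For the Jacobi identity of $[\cdot,\cdot]$, your second option --- invoking the equivalence between the $[\cdot,\cdot]$- and $\llbracket\cdot,\cdot\rrbracket$-formulations of a post-Lie algebra already recorded in the text after equations \eqref{eq::Torsion}--\eqref{eq::Curvature} --- is cleaner than the direct expansion and is what the paper itself implicitly relies on. If you do want to carry out the cyclic-sum computation by hand, the standard shortcut is to use the first Bianchi identity for a flat connection, $\sum_{\mathrm{cyc}}\bigl(T(T(x,y),z)+(\triangledown_xT)(y,z)\bigr)=0$, which under $\triangledown T=0$ gives $\sum_{\mathrm{cyc}}[[x,y],z]=0$ directly and avoids the bookkeeping you were worried about.
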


We will furthermore need the following theorem from \cite{FloystadManchonMuntheKaas2020}.

\begin{theorem} \cite{FloystadManchonMuntheKaas2020}
	Let $(E,\llbracket \cdot,\cdot \rrbracket_E, \rho_{\ast}  )$ be a Lie algebroid over $\mathcal{M}$ and suppose that $\Gamma(E)$ has linear connection $\graft$. Then the Lie-Rinehart algebra $\Gamma(E)$ over the ring $C^{\infty}(\mathcal{M},\mathbb{R})$ is tracial.
\end{theorem}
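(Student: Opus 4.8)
The plan is to construct the trace map $\tau$ explicitly using the connection $\graft$ and to leverage the fact that a linear connection on a Lie algebroid is, by definition, $C^\infty(\mathcal{M},\mathbb{R})$-linear in its first slot. The key structural observation is that for a Lie algebroid with linear connection, the algebra of elementary endomorphisms $E\ell_R(\Gamma(E))$ (with $R = C^\infty(\mathcal{M},\mathbb{R})$) consists of $R$-linear endomorphisms of $\Gamma(E)$ of a very constrained local shape: each generator $\triangledown_{Y_1}\cdots\triangledown_{Y_n}\delta X$ acts on a section $Z$ by differentiating the coefficients of $Z$ (via the anchor) and contracting against the local frame data of $X, Y_1, \dots, Y_n$. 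Concretely, working in a local trivialization of $E$ with frame $(e_i)$ and dual frame $(e^i)$, every elementary endomorphism is represented by a matrix-valued differential operator whose entries lie in $R$, and the natural candidate for $\tau$ is the fibrewise trace of this matrix of operators (evaluated at zeroth order, or equivalently, the pointwise trace of the endomorphism part).

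First I would recall from the cited work of \cite{FloystadManchonMuntheKaas2020} — and re-derive in this setting — that in a Lie algebroid with linear connection, $\delta X (Z) = \triangledown_Z X$ is $R$-linear in $Z$ (this uses precisely that $\triangledown$ is a connection in the Lie-algebroid sense: $(\phi Z)\graft X = \phi(Z \graft X)$), so $\delta X \in \hom_R(\Gamma(E))$ genuinely, and the composites $\triangledown_{Y_1}\cdots\triangledown_{Y_n}\delta X$ remain $R$-linear because the correction terms $u(\triangledown_Y \cdot)$ in the extension of $\triangledown$ to $\hom(L)$ preserve $R$-linearity. Then I would define $\tau$ on a generator by taking its image under the canonical $R$-module trace $\hom_R(\Gamma(E)) \supseteq E\ell_R(\Gamma(E)) \to R$ available because $\Gamma(E)$ is, locally, a finitely generated projective $R$-module — so one can use the categorical trace $\mathrm{Hom}_R(P,P) \cong P \otimes_R P^\vee \to R$. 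The two required identities then follow: $\tau(\alpha\circ\beta) = \tau(\beta\circ\alpha)$ is exactly cyclicity of the categorical trace, and $\tau(\triangledown_X \alpha) = \rho(X)\tau(\alpha)$ reduces to the Leibniz-type compatibility $\triangledown_X(\alpha)(Z) = \triangledown_X(\alpha(Z)) - \alpha(\triangledown_X Z)$ together with the fact that the trace of $Z \mapsto \triangledown_X(\alpha(Z)) - \alpha(\triangledown_X Z)$ picks out $\rho(X)$ applied to the trace of $\alpha$ — precisely the statement that the connection's induced operator on $\mathrm{End}(E)$ differentiates the trace of an endomorphism via the anchor.

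The steps, in order, would be: (i) verify $E\ell_R(\Gamma(E)) \subseteq \hom_R(\Gamma(E))$, i.e. that all generators are $R$-linear, using the linear-connection axioms; (ii) observe $\Gamma(E)$ is locally finitely generated projective over $R$ and introduce the categorical/fibrewise trace on $\hom_R$; (iii) check the candidate $\tau$ is well-defined on $E\ell_R$ (it is just a restriction of the global trace, so this is automatic, but one should note it does not depend on trivialization — it is the intrinsic fibrewise trace of the leading endomorphism symbol); (iv) prove cyclicity; (v) prove the derivation property $\tau(\triangledown_X\alpha) = \rho(X)\tau(\alpha)$ by a short local computation.

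**The main obstacle** I anticipate is step (v): one must be careful that $\tau$ of an elementary endomorphism is not literally "the pointwise trace of a linear map $\Gamma(E)\to\Gamma(E)$" — such a map may involve differentiation of coefficients, so it is not $R$-linear in the naive pointwise sense unless one has already established (i). Once (i) is in hand, a generator acts $R$-linearly, hence pointwise fibrewise as a genuine bundle endomorphism with $R$-coefficient entries, and the trace is unambiguous; but checking that $\triangledown_X$ acting on such an endomorphism sends its trace to $\rho(X)$ of the trace requires unwinding the extended connection on $\hom(L)$ and confirming the anchor term appears exactly once, with the right sign. This is the computational heart of the argument and the place where the specific Leibniz identity of a connection (as opposed to an arbitrary $R$-linear derivation) is essential. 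The rest is formal: cyclicity of the categorical trace and the fact, already recorded in the excerpt for the pre-Lie case, that $E\ell_R$ is generated by the $\delta X$ and their $\triangledown$-derivatives.
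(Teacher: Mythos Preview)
The paper does not supply a proof of this theorem: it is quoted verbatim from \cite{FloystadManchonMuntheKaas2020} as background and no argument is given in the present text. There is therefore nothing in the paper to compare your proposal against.

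That said, your outline is essentially the standard argument and is correct. A few remarks. Your step (i) is the crux and your reasoning is right: $R$-linearity of $\delta X$ is exactly the axiom $(\phi Z)\graft X=\phi(Z\graft X)$, and a short Leibniz computation shows that each further application of $\triangledown_Y$ preserves $R$-linearity, so every generator of $E\ell_R(\Gamma(E))$ lands in $\Gamma(\mathrm{End}(E))$. Once that is established there is no ``leading symbol'' subtlety left---the endomorphisms are honestly tensorial, not differential operators---so the language about zeroth order can be dropped. For step (ii) you do not need Serre--Swan or compactness: for any smooth vector bundle one has the canonical fibrewise trace $\Gamma(\mathrm{End}(E))\to C^\infty(\mathcal{M})$, and $\hom_R(\Gamma(E),\Gamma(E))\cong\Gamma(\mathrm{End}(E))$ holds over any manifold by the usual bump-function argument. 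Your step (v) goes through exactly as you anticipate: in a local frame $(\triangledown_X\alpha)^i_{\,j}=\rho(X)(\alpha^i_{\,j})+\omega^i_{\,k}\alpha^k_{\,j}-\omega^k_{\,j}\alpha^i_{\,k}$ with $\omega$ the connection one-form, and the last two terms cancel upon contraction $i=j$.
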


We conclude this section by showing the concrete construction of a post-Lie algebroid on a homogeneous space. Let $\mathcal{M}$ be a homogeneous space, let $G$ be a Lie group with Lie algebra $\mathfrak{g}$ and a left transitive action $\cdot : G \times \mathcal{M} \to \mathcal{M}$. Then one also gets an action $\cdot : \mathfrak{g} \times \mathcal{M} \to T\mathcal{M}$ by differentiation:
\begin{align*}
	V \cdot p := \frac{d}{d t} \mid_{t=0} \exp(tV)\cdot p,
\end{align*}
where $\exp: \mathfrak{g} \to G$ is the classical exponential map from a Lie algebra to its Lie group. In this setting, any ODE on $\mathcal{M}$ can be formulated in terms of this action. In particular we can write any ODE as:
\begin{align}
	y'(y)=f(y(t))\cdot y(t), \quad y(0)=y_0, \label{eq::LieButcherEquation}
\end{align}
for some map $f: \mathcal{M} \to \mathfrak{g}$. Note that the map $f$ corresponds to a vector field $p \to f(p)\cdot p$. This way to formulate ODEs is one of the main motivations to study post-Lie structures, and underlies a lot of Lie group integration methods \cite{Munthe-KaasWright2008}.

We now endow the space $C^{\infty}(\mathcal{M},\mathfrak{g})$ with a post-Lie algebra structure. For $f \in C^{\infty}(\mathcal{M},\mathfrak{g})$ and $V \in \mathfrak{g}$, we define the \textit{Lie derivative} of $f$ in the direction of $V$ as:
\begin{align*}
	(V \graft f)(p):=\frac{d}{dt} \mid_{t=0} f( \exp(tV)\cdot p).
\end{align*}
Then for $f,g \in C^{\infty}(\mathcal{M},\mathfrak{g})$, we can define the products:
\begin{align}
	[f,g](p):=&[f(p),g(p)]_{\mathfrak{g}}, \label{eq::PointwiseLieBracket} \\
	(f \graft g)(p):=&(f(p) \graft g)(p), \label{eq::VectorFieldGrafting}
\end{align}
where $[\cdot,\cdot]_{\mathfrak{g}}$ is the Lie bracket in $\mathfrak{g}$. This turns $(C^{\infty}(\mathcal{M},\mathfrak{g}),[\cdot,\cdot],\graft)$ into a post-Lie algebra \cite{Munthe-KaasLundervold2012}. Now consider the action algebroid of $\mathfrak{g}$. It is given by the trivial vector bundle $\mathfrak{g} \times \mathcal{M} \to \mathcal{M}$, and anchor map given by the Lie algebra action $\rho_{\ast}(V,p):=(p,V\cdot p) \in T\mathcal{M}$. One easily sees that the sections $\Gamma(\mathfrak{g} \times \mathcal{M})$ can be identified with the space $C^{\infty}(\mathcal{M},\mathfrak{g})$. Then we have that $(\mathfrak{g},[\cdot,\cdot],\graft,\rho_{\ast}  )$ is a post-Lie algebroid, and $C^{\infty}(\mathcal{M},\mathfrak{g})$ is a tracial Lie-Rinehart algebra over $C^{\infty}(\mathcal{M},\mathbb{R})$.

\section{Post-Lie-Rinehart algebras} \label{sec::PostLieRinehart}

Following the definition of a pre-Lie-Rinehart algebra, we naturally arrive at the following definition of a post-Lie-Rinehart algebra.

\begin{definition}
	A post-Lie-Rinehart algebra is a Lie-Rinehart algebra endowed with a flat connection that has constant torsion.
\end{definition}

We immediately note that the Lie-Rinehart algebra $C^{\infty}(\mathcal{M},\mathfrak{g})$ constructed in section \ref{ssect::PostLieAlgebroid} is indeed a tracial post-Lie-Rinehart algebra by this definition. In the present section, we will construct the free tracial post-Lie-Rinehart algebra.

\subsection{The free tracial post-Lie-Rinehart algebra}

We first make some definitions. A \textit{planar aroma} is a connected graph where every vertex has exactly one incoming edge, endowed with an embedding into the plane. This means that, for each vertex, there is a total order on the outgoing edges. The vector space spanned by planar aromas is denoted $\mathcal{PA}$ and the set of all planar aromas is denoted $PA$. An aroma consists of a central cycle, with trees attached to each vertex in this cycle. Note that, because of the planar embedding, edges connecting trees to this cycle can be either "inside" the cycle or "outside" the cycle. A tree inside the cycle will have its root denoted by $\forestTE$ instead of $\forestUE$ to indicate this. This is not a different type of vertex, it is only a notation to indicate the planar embedding. Recall that the edges in the central cycles are oriented right-to-left. As an example, consider the planar aroma:
\begin{align*}
	\forestVE.
\end{align*}
The vertex $b$ has outgoing edges to the vertices $e,a,h,i,$ in that order. Because the vertex $e$ is denoted by $\forestWE$, we consider the edge from $b$ to $e$ as being to the left of the edge from $b$ to $a$. The edge from $b$ to $h$ is to the right of the edge from $b$ to $a$.

The symmetric algebra over planar aromas is denoted $S(\mathcal{PA})$. We denote the space $S(\mathcal{PA}) \otimes Lie(\mathcal{PT})$ by $\mathcal{APT}$, \textit{aromatic planar trees}. We will concatenate the planar aromas with the planar trees, rather than writing the tensor product. This space is trivially an $S(\mathcal{PA})$-module, by concatenating more planar aromas. We will write subscript $\mathcal{C}$, e.g. $\mathcal{APT}_{\mathcal{C}}$ or $S(\mathcal{PA}_{\mathcal{C}})$, when we want to emphasize that the vertices of the trees/aromas are labeled by the set $\mathcal{C}$. Such a set always exists, but will most of the time be omitted from the notation. As before, we will label our vertices when required to refer to specific vertices rather than as a way to indicate a generating set $\mathcal{C}$. We now define a tracial post-Lie-Rinehart structure on $\mathcal{APT}$. Let $\alpha_1,\alpha_2 \in S(\mathcal{PA})$ and let $t_1,t_2 \in Lie(\mathcal{PT})$ so that $\alpha_1t_2$ and $\alpha_2t_2$ are two arbitrary elements in $\mathcal{APT}$. Then we define a Lie bracket by:
\begin{align}
	[\alpha_1t_1,\alpha_2t_2]:=\alpha_1\alpha_2[t_1,t_2]. \label{eq::AromaticLieBracket}
\end{align}
We define the anchor map $\rho: \mathcal{APT} \to Der(S(\mathcal{PA}),S(\mathcal{PA}))$. First when $t_1$ is a tree, let $\rho(\alpha_1 t_1)(\alpha_2)$ be the sum over all planar aromas obtained by adding a leftmost edge from some vertex of $\alpha_2$ to the root of $t_2$. For example:
\begin{align*}
	\rho( \forestXE \forestYE )( \forestAF \forestBF )=\forestCF \forestDF \forestEF + \forestFF \forestGF \forestHF + \forestIF \forestJF \forestKF + \forestLF \forestMF \forestNF + \forestOF \forestPF \forestQF.
\end{align*}
Extended to Lie polynomials by:
\begin{align} \label{eq::RhoLiePoly}
	\rho([t_1,t_2])(\alpha)= \rho(t_1)(\rho(t_2)(\alpha)) - \rho(t_1 \graft t_2)(\alpha) - \rho(t_2)(\rho(t_1)(\alpha)) + \rho(t_2 \graft t_1)(\alpha).
\end{align}
Note that this extension is identical to the post-Lie axiom \eqref{eq::PostLie2}. Similar to the free post-Lie algebra, it amounts to grafting the formal commutator $t_1t_2 - t_2t_1$ onto each vertex.
\begin{lemma}
	The equality
	\begin{align*}
		\rho(t)(\alpha_1\alpha_2)=\rho(t)(\alpha_1)\alpha_2+\alpha_1 \rho(t)(\alpha_2)
	\end{align*}
	holds.
\end{lemma}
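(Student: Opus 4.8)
The plan is to reduce the statement to the case where $t$ is a single planar tree, and to handle a general Lie polynomial by induction using the recursive definition \eqref{eq::RhoLiePoly}. Since $\rho$ is linear in its first argument and $Lie(\mathcal{PT})$ is spanned by Lie monomials (iterated brackets of single planar trees), it suffices to verify the identity when $t$ is such a monomial; I induct on $\mathrm{len}(t)$, the number of bracket symbols in $t$, so that $\mathrm{len}$ of a single tree is $0$ and $\mathrm{len}([a,b])=\mathrm{len}(a)+\mathrm{len}(b)+1$. Likewise, as both sides are bilinear in the pair $(\alpha_1,\alpha_2)$, we may assume $\alpha_1$ and $\alpha_2$ are monomials in $S(\mathcal{PA})$, that is, commutative products of planar aromas.

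For the base case $t$ is a single planar tree. By definition $\rho(t)(\alpha_1\alpha_2)$ is the sum, over every vertex $v$ of the multi-aroma $\alpha_1\alpha_2$, of the multi-aroma obtained by attaching a leftmost edge from $v$ to the root of $t$. Such an attachment alters only the planar-aroma factor containing $v$ and leaves the other factors untouched; since the vertex set of $\alpha_1\alpha_2$ is the disjoint union of the vertex sets of $\alpha_1$ and $\alpha_2$, splitting the sum along this partition yields exactly $\rho(t)(\alpha_1)\alpha_2+\alpha_1\rho(t)(\alpha_2)$, so $\rho(t)$ is a derivation.

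For the inductive step, write the monomial as $[a,b]$ with $a,b$ Lie monomials of strictly smaller $\mathrm{len}$. Read as an identity of operators on $S(\mathcal{PA})$, the recursion \eqref{eq::RhoLiePoly} becomes
\begin{align*}
	\rho([a,b]) = [\rho(a),\rho(b)] - \rho(a\graft b) + \rho(b\graft a),
\end{align*}
where $[\cdot,\cdot]$ now denotes the commutator of operators. The derivations of $S(\mathcal{PA})$ form a Lie subalgebra of $\mathrm{End}(S(\mathcal{PA}))$, so $[\rho(a),\rho(b)]$ is a derivation by the induction hypothesis applied to $a$ and $b$. For the grafting terms one first establishes, by a short separate induction using that $\graft$ acts as a derivation of the Lie bracket in its right argument together with the single-tree case, the bound $\mathrm{len}(x\graft y)\le\mathrm{len}(y)$; consequently $a\graft b$ and $b\graft a$ are linear combinations of Lie monomials of $\mathrm{len}$ at most $\mathrm{len}(b)$ and $\mathrm{len}(a)$, hence strictly smaller than $\mathrm{len}([a,b])$. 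By the induction hypothesis and linearity, $\rho(a\graft b)$ and $\rho(b\graft a)$ are derivations, and since derivations form a linear subspace, $\rho([a,b])$ is a derivation, completing the induction.

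The one genuinely delicate point is the auxiliary estimate $\mathrm{len}(x\graft y)\le\mathrm{len}(y)$, which is exactly what makes the recursion \eqref{eq::RhoLiePoly} terminate; everything else is the standard fact that commutators of derivations are derivations, together with a bookkeeping of vertices. Finally, the identity extends from $Lie(\mathcal{PT})$ to all of $\mathcal{APT}$ at once: for $\beta\in S(\mathcal{PA})$ we have $\rho(\beta t)=\beta\,\rho(t)$, and multiplication by $\beta$ preserves the Leibniz relation because $S(\mathcal{PA})$ is commutative.
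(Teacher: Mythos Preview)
Your argument is correct and follows the same route as the paper: verify the derivation property directly for a single planar tree, then induct on the length of the Lie monomial via the recursion \eqref{eq::RhoLiePoly}. The only difference is that you make explicit the auxiliary bound $\mathrm{len}(x\graft y)\le\mathrm{len}(y)$ needed to apply the induction hypothesis to the grafting terms $\rho(a\graft b)$ and $\rho(b\graft a)$, a point the paper leaves implicit.
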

\begin{proof}
	This is clear from the definition whenever $t_1 \in \mathcal{PT}$. Suppose not, then there exists $t_1,t_2 \in Lie(\mathcal{PT})$ such that $t=[t_1,t_2]$, and:
	\begin{align*}
		\rho([t_1,t_2])(\alpha_1\alpha_2)= \rho(t_1)(\rho(t_2)(\alpha_1\alpha_2)) - \rho(t_1 \graft t_2)(\alpha_1\alpha_2) - \rho(t_2)(\rho(t_1)(\alpha_1\alpha_2)) + \rho(t_2 \graft t_1)(\alpha_1\alpha_2).
	\end{align*}
	The statement then follows by induction over the length of the Lie polynomial, as the induction hypothesis can be applied to each term in the equation above.
\end{proof}
Recall the left grafting product $\graft: Lie(PT) \otimes Lie(PT) \to Lie(PT)$. We now extend it to a product on $\mathcal{APT}$ by grafting onto every vertex:
\begin{align*}
	\alpha_1t_1 \graft \alpha_2t_2=\alpha_1 \rho(t_1)(\alpha_2)t_2 + \alpha_1\alpha_2(t_1 \graft t_2).
\end{align*}

\begin{proposition}
	The triple $(\mathcal{APT},[\cdot,\cdot],\graft)$ is a post-Lie algebra.
\end{proposition}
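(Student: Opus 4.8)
The plan is to reduce, by bilinearity of $[\cdot,\cdot]$, $\rho$, and $\graft$, to verifying the two post-Lie axioms \eqref{eq::PostLie1}--\eqref{eq::PostLie2} on homogeneous elements $X=\alpha_1 t_1$, $Y=\alpha_2 t_2$, $Z=\alpha_3 t_3$, with $\alpha_i\in S(\mathcal{PA})$ and $t_i\in Lie(\mathcal{PT})$. First I would dispose of the implicit claim that $[\cdot,\cdot]$ from \eqref{eq::AromaticLieBracket} is a Lie bracket: antisymmetry and the Jacobi identity descend from those in $Lie(\mathcal{PT})$ together with the commutativity of $S(\mathcal{PA})$. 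Throughout, the two facts I would lean on are that $(Lie(\mathcal{PT}),[\cdot,\cdot],\graft)$ is a post-Lie algebra, so \eqref{eq::PostLie1} and \eqref{eq::PostLie2} already hold there, and the preceding Lemma, which says that each $\rho(t)$ is a derivation of $S(\mathcal{PA})$, together with its defining relation \eqref{eq::RhoLiePoly} on Lie polynomials.

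For \eqref{eq::PostLie1} the computation is short. I would expand $X\graft[Y,Z]$ using $[Y,Z]=\alpha_2\alpha_3[t_2,t_3]$ and the definition of $\graft$ on $\mathcal{APT}$, getting $\alpha_1\rho(t_1)(\alpha_2\alpha_3)[t_2,t_3]+\alpha_1\alpha_2\alpha_3\,(t_1\graft [t_2,t_3])$; applying the Lemma to the first summand and \eqref{eq::PostLie1} of $Lie(\mathcal{PT})$ to the second turns this into a sum of four terms. Expanding $[X\graft Y,Z]+[Y,X\graft Z]$ via \eqref{eq::AromaticLieBracket} and the definition of $\graft$ yields exactly those four terms after matching coefficients with the commutativity of $S(\mathcal{PA})$.

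The bulk of the work is \eqref{eq::PostLie2}. Its left side is $[X,Y]\graft Z=\alpha_1\alpha_2\,\rho([t_1,t_2])(\alpha_3)\,t_3+\alpha_1\alpha_2\alpha_3\,([t_1,t_2]\graft t_3)$. For the right side I would expand each of $X\graft(Y\graft Z)$, $(X\graft Y)\graft Z$, $Y\graft(X\graft Z)$, $(Y\graft X)\graft Z$ by applying the definition of $\graft$ on $\mathcal{APT}$ twice and then the Lemma to break up any $\rho(t_i)$ of a product. The resulting terms sort into three families by shape: scalars times $t_3$; scalars times $t_1\graft t_3$ or $t_2\graft t_3$; and scalars times an iterated graft of the $t_i$. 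The middle family cancels completely in pairs. The iterated-graft family collapses, with signs as in \eqref{eq::PostLie2}, into $\alpha_1\alpha_2\alpha_3\,([t_1,t_2]\graft t_3)$ by the post-Lie axiom \eqref{eq::PostLie2} inside $Lie(\mathcal{PT})$. In the first family, the ``cross'' terms $\alpha_1\rho(t_1)(\alpha_2)\rho(t_2)(\alpha_3)t_3$ and $\alpha_2\rho(t_2)(\alpha_1)\rho(t_1)(\alpha_3)t_3$ cancel pairwise, and the remainder is $\alpha_1\alpha_2\bigl(\rho(t_1)\rho(t_2)(\alpha_3)-\rho(t_1\graft t_2)(\alpha_3)-\rho(t_2)\rho(t_1)(\alpha_3)+\rho(t_2\graft t_1)(\alpha_3)\bigr)t_3$, which is precisely $\alpha_1\alpha_2\,\rho([t_1,t_2])(\alpha_3)\,t_3$ by the defining relation \eqref{eq::RhoLiePoly} applied to the bracket $[t_1,t_2]$. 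Adding the three families reproduces the left side.

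I expect the main obstacle to be bookkeeping rather than anything conceptual: the verification of \eqref{eq::PostLie2} produces on the order of twenty terms that must be cancelled correctly, and one must keep track that the Lemma and \eqref{eq::RhoLiePoly} may need to be applied with $t_i$ a genuine Lie polynomial, not merely a single planar tree. The reason the accounting works out is structural: the relation \eqref{eq::RhoLiePoly} defining $\rho$ on Lie polynomials has exactly the shape of the post-Lie axiom \eqref{eq::PostLie2}, so the aroma-valued part of the identity collapses in lock-step with the tree-valued part, with the same signs doing the work on both sides.
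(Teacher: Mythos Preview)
Your proposal is correct and follows essentially the same route as the paper: reduce to homogeneous elements, verify \eqref{eq::PostLie1} using the derivation property of $\rho$ and the post-Lie axiom in $Lie(\mathcal{PT})$, and verify \eqref{eq::PostLie2} by expanding all four terms, cancelling, and invoking \eqref{eq::RhoLiePoly} together with the post-Lie axiom in $Lie(\mathcal{PT})$. The paper organizes the second computation via the associator $ass_\graft$ rather than your three ``families'', but the underlying cancellations are identical.
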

\begin{proof}
	Let $\alpha_it_i, i=1,2,3$ be three arbitrary elements, we check the two axioms. The first axiom is verified as follows.
	\begin{align*}
		\alpha_1t_1 \graft [\alpha_2t_2,\alpha_3t_3]=& \alpha_1t_1 \graft (\alpha_2\alpha_3[t_2,t_3]) \\
		=&\alpha_1\rho(t_1)(\alpha_2)\alpha_3[t_2,t_3]+\alpha_1\alpha_2\rho(t_1)(\alpha_3)[t_2,t_3] + \alpha_1\alpha_2\alpha_3[t_1 \graft t_2,t_3] + \alpha_1\alpha_2\alpha_3[t_2,t_1\graft t_3] \\
		=& [\alpha_1 \rho(t_1)(\alpha_2)t_2 + \alpha_1 \alpha_2 (t_1 \graft t_2),\alpha_3t_3  ] + [\alpha_2t_2, \alpha_1\rho(t_1)(\alpha_3)t_3 + \alpha_1 \alpha_3 (t_1 \graft t_3)  ] \\
		=& [\alpha_1t_1 \graft \alpha_2t_2,\alpha_3t_3] + [\alpha_2t_2,\alpha_1t_1 \graft \alpha_3t_3].
	\end{align*}
	For the second axiom
	\begin{align}\label{eq::PostLieSecondAxiom}
		[\alpha_1t_1,\alpha_2t_2] \graft \alpha_3t_3=\alpha_1t_1 \graft (\alpha_2t_2 \graft \alpha_3t_3) - (\alpha_1t_1 \graft \alpha_2t_2)\graft \alpha_3t_3 - \alpha_2t_2 \graft (\alpha_1t_1 \graft \alpha_3t_3) + (\alpha_2t_2 \graft \alpha_1t_1)\graft \alpha_3t_3,
	\end{align}
	we divide the computation up into smaller step. Firstly:
	\begin{align*}
		\alpha_1t_1 \graft (\alpha_2t_2 \graft \alpha_3t_3 ) =& \alpha_1t_1 \graft (\alpha_2 \rho(t_2)(\alpha_3)t_3 + \alpha_2\alpha_3(t_2 \graft t_3)  )\\
		=&\alpha_1 \rho(t_1)(\alpha_2)\rho(t_2)(\alpha_3)t_3 + \alpha_1\alpha_2 \rho(t_1)(\rho(t_2)(\alpha_3))t_3 + \alpha_1\alpha_2 \rho(t_2)(\alpha_3)(t_1 \graft t_3)\\
		+& \alpha_1 \rho(t_1)(\alpha_2)\alpha_3(t_2 \graft t_3) + \alpha_1 \alpha_2 \rho(t_1)(\alpha_3)(t_2 \graft t_3) + \alpha_1 \alpha_2 \alpha_3 (t_1 \graft(t_2 \graft t_3)),
	\end{align*}
	and
	\begin{align*}
		(\alpha_1 t_1 \graft \alpha_2 t_2) \graft \alpha_3 t_3 =& \alpha_1\rho(t_1)(\alpha_2)t_2 \graft \alpha_3 t_3 + \alpha_1 \alpha_2 (t_1 \graft t_2) \graft \alpha_3 t_3 \\
		=&\alpha_1 \rho(t_1)(\alpha_2)\rho(t_2)(\alpha_3)t_3 + \alpha_1 \rho(t_1)(\alpha_2)\alpha_3 (t_2 \graft t_3)\\
		+& \alpha_1 \alpha_2 \rho(t_1 \graft t_2)(\alpha_3)t_3 + \alpha_1 \alpha_2 \alpha_3 (t_1 \graft t_2)\graft t_3.
	\end{align*}
	Putting these two computations together, we get:
	\begin{align*}
		ass_{\graft}(\alpha_1t_1,\alpha_2t_2,\alpha_3t_3)=&\alpha_1\alpha_2\rho(t_1)(\rho(t_2)(\alpha_3))t_3+\alpha_1\alpha_2\rho(t_2)(\alpha_3)(t_1 \graft t_3)+\alpha_1\alpha_2\rho(t_1)(\alpha_3)(t_2 \graft t_3)\\
		+&\alpha_1\alpha_2\alpha_3(t_1 \graft (t_2 \graft t_3)) - \alpha_1 \alpha_2 \rho(t_1 \graft t_2)(\alpha_3)t_3 - \alpha_1\alpha_2\alpha_3 (t_1 \graft t_2)\graft t_3.
	\end{align*}
	Swapping $\alpha_1t_1$ and $\alpha_2t_2$, we get after cancellations:
	\begin{align*}
		ass_{\graft}(\alpha_1t_1,\alpha_2t_2,\alpha_3t_3) - ass_{\graft}(\alpha_2t_2,\alpha_1t_1,\alpha_3t_3)=&\alpha_1\alpha_2\rho(t_1)(\rho(t_2)(\alpha_3))t_3 - \alpha_1\alpha_2\rho(t_2)(\rho(t_1)(\alpha_3))t_3\\
		+&\alpha_1\alpha_2\alpha_3(t_1 \graft (t_2 \graft t_3)) - \alpha_1\alpha_2\alpha_3(t_2 \graft (t_1 \graft t_3))\\
		-&\alpha_1\alpha_2\rho(t_1 \graft t_2)(\alpha_3)t_3 + \alpha_1\alpha_2\rho(t_2 \graft t_1)(\alpha_3)t_3 \\
		-&\alpha_1\alpha_2\alpha_3(t_1 \graft t_2)\graft t_3 + \alpha_1 \alpha_2 \alpha_3 (t_2 \graft t_1)\graft t_3.
	\end{align*}
	Finally we compute the left side of the equality \eqref{eq::PostLieSecondAxiom}
	\begin{align*}
		[\alpha_1t_1,\alpha_2t_2] \graft \alpha_3t_3=& (\alpha_1\alpha_2[t_1,t_2])\graft \alpha_3t_3 \\
		=& \alpha_1\alpha_2 \rho([t_1,t_2])(\alpha_3)t_3 + \alpha_1\alpha_2\alpha_3([t_1,t_2]\graft t_3) \\
		=& \alpha_1\alpha_2\rho(t_1)(\rho(t_2)(\alpha_3))t_3 - \alpha_1\alpha_2\rho(t_1 \graft t_2)(\alpha_3)t_3 - \alpha_1\alpha_2\rho(t_2)(\rho(t_1)(\alpha_3))t_3\\
		+& \alpha_1\alpha_2\rho(t_2 \graft t_1)(\alpha_3)t_3 + \alpha_1\alpha_2\alpha_3(t_1 \graft (t_2 \graft t_3)) - \alpha_1\alpha_2\alpha_3((t_1 \graft t_2)\graft t_3)\\
		-& \alpha_1\alpha_2\alpha_3(t_2 \graft (t_1 \graft t_3)) + \alpha_1\alpha_2\alpha_3((t_2 \graft t_1)\graft t_3),
	\end{align*}
	and note that the two sides of the equality agrees.
\end{proof}

As $(\mathcal{APT},[\cdot,\cdot],\graft)$ is a post-Lie algebra, we can define a second Lie bracket $\llbracket \cdot,\cdot \rrbracket$ by:
\begin{align*}
	\llbracket \alpha_1 t_1, \alpha_2 t_2 \rrbracket := \alpha_1 t_1 \graft \alpha_2 t_2 - \alpha_2 t_2 \graft \alpha_1 t_1 + [\alpha_1 t_1, \alpha_2 t_2].
\end{align*}

\begin{lemma}
	The Lie algebra $(\mathcal{APT},\llbracket \cdot, \cdot \rrbracket)$ is a Lie-Rinehart algebra over $S(\mathcal{PA})$ with anchor map $\rho$.
\end{lemma}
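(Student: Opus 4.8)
The plan is to verify, one by one, the three clauses in the definition of a Lie-Rinehart algebra that are not already in hand: that $\rho$ is an $S(\mathcal{PA})$-linear map into $\text{Der}(S(\mathcal{PA}),S(\mathcal{PA}))$; that $\rho$ is a morphism of Lie algebras $(\mathcal{APT},\llbracket\cdot,\cdot\rrbracket)\to(\text{Der}(S(\mathcal{PA})),[\cdot,\cdot])$; and that the anchor identity $\llbracket X,fY\rrbracket=(\rho(X)f)Y+f\llbracket X,Y\rrbracket$ holds. The remaining two clauses come for free: $\mathcal{APT}$ is an $S(\mathcal{PA})$-module by concatenation of planar aromas, and $(\mathcal{APT},\llbracket\cdot,\cdot\rrbracket)$ is a Lie algebra because $\llbracket\cdot,\cdot\rrbracket$ is the sub-adjacent second bracket of the post-Lie algebra $(\mathcal{APT},[\cdot,\cdot],\graft)$ supplied by the preceding Proposition.

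For the first clause, note that the extension \eqref{eq::RhoLiePoly} touches only the tree-part of an element, so $\rho(\alpha t)=\alpha\,\rho(t)$ for $\alpha\in S(\mathcal{PA})$ and $t\in Lie(\mathcal{PT})$, which makes $\rho$ manifestly $S(\mathcal{PA})$-linear. That each $\rho(\alpha t)$ is a derivation then follows from the Lemma establishing $\rho(t)(\alpha_1\alpha_2)=\rho(t)(\alpha_1)\alpha_2+\alpha_1\rho(t)(\alpha_2)$, together with the elementary fact that in a commutative ring a ring-multiple $\alpha D$ of a derivation $D$ is again a derivation. The conceptual heart of the second clause is the case $X,Y\in Lie(\mathcal{PT})$: here $\llbracket X,Y\rrbracket=(X\graft Y)-(Y\graft X)+[X,Y]$, and applying $\rho$ and substituting the defining formula \eqref{eq::RhoLiePoly} for $\rho([X,Y])$, the terms $\rho(X\graft Y)$ and $\rho(Y\graft X)$ cancel against their counterparts inside \eqref{eq::RhoLiePoly}, leaving exactly $\rho(X)\circ\rho(Y)-\rho(Y)\circ\rho(X)=[\rho(X),\rho(Y)]$.

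To promote this to all of $\mathcal{APT}$, I would write $X=\alpha_1t_1$, $Y=\alpha_2t_2$, expand $\llbracket X,Y\rrbracket$ using the definition of $\graft$ on $\mathcal{APT}$ and of the bracket \eqref{eq::AromaticLieBracket}, and apply $\rho$ by $S(\mathcal{PA})$-linearity; on the other side one expands $[\rho(X),\rho(Y)]=[\alpha_1\rho(t_1),\alpha_2\rho(t_2)]$ using that $\rho(t_1)$ is a derivation. The ``mixed'' terms $\alpha_1\bigl(\rho(t_1)\alpha_2\bigr)\rho(t_2)$ and $-\alpha_2\bigl(\rho(t_2)\alpha_1\bigr)\rho(t_1)$ then occur identically on both sides, and the surviving $\alpha_1\alpha_2$-multiple is precisely the $Lie(\mathcal{PT})$-identity just proved. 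The anchor identity is an entirely parallel expansion: in $\llbracket\alpha_1t_1,(f\alpha_2)t_2\rrbracket$ one uses the Leibniz Lemma to split $\rho(t_1)(f\alpha_2)=\bigl(\rho(t_1)f\bigr)\alpha_2+f\,\rho(t_1)(\alpha_2)$, so that the term $\alpha_1\bigl(\rho(t_1)f\bigr)\alpha_2t_2=(\rho(X)f)Y$ splits off and, using commutativity of $S(\mathcal{PA})$, everything remaining reassembles into $f\llbracket\alpha_1t_1,\alpha_2t_2\rrbracket$.

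There is no genuinely deep step here; the proof is a bookkeeping exercise. The main thing to be careful about is that the argument rests entirely on $\rho$ being well defined on $Lie(\mathcal{PT})$ via \eqref{eq::RhoLiePoly} and on the Leibniz property of the preceding Lemma, so these should be invoked rather than re-derived; and one must respect the noncommutativity of composition of derivations, keeping in mind that only the commutators, not the individual composites $\rho(t_1)\circ\rho(t_2)$, lie in $\text{Der}(S(\mathcal{PA}))$.
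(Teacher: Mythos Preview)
Your proposal is correct and, for the anchor identity, follows exactly the same direct expansion as the paper. The paper's own proof is in fact less complete than yours: it verifies only the Leibniz rule $\llbracket \alpha_1 t_1, \alpha \alpha_2 t_2 \rrbracket = \rho(\alpha_1 t_1)(\alpha)\alpha_2 t_2 + \alpha \llbracket \alpha_1 t_1, \alpha_2 t_2 \rrbracket$ by unpacking the definition of $\llbracket\cdot,\cdot\rrbracket$, tacitly treating the $S(\mathcal{PA})$-linearity of $\rho$, its landing in derivations, and the Lie-morphism property as already evident, whereas you spell out all three and in particular give the clean cancellation argument from \eqref{eq::RhoLiePoly} showing $\rho(\llbracket t_1,t_2\rrbracket)=[\rho(t_1),\rho(t_2)]$ on $Lie(\mathcal{PT})$ before promoting it by linearity.
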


\begin{proof}
	We have to check that the map $\rho$ satisfies the Leibniz rule
	\begin{align*}
		\llbracket \alpha_1 t_1, \alpha \alpha_2 t_2 \rrbracket = \rho(\alpha_1 t_1)(\alpha)\alpha_2 t_2 + \alpha \llbracket \alpha_1 t_1, \alpha_2 t_2 \rrbracket,
	\end{align*}
	for $\alpha_1t_1,\alpha_2t_2 \in \mathcal{APT}$ and $\alpha \in S(\mathcal{PA})$. Plugging the definition of the second Lie bracket into the left side of the equality gives:
	\begin{align*}
		\llbracket \alpha_1 t_1, \alpha \alpha_2 t_2 \rrbracket =& \alpha_1 t_1 \graft \alpha \alpha_2 t_2 - \alpha \alpha_2 t_2 \graft \alpha_1 t_1 + \alpha \alpha_1 \alpha_2 [t_1,t_2] \\
		=& \rho(\alpha_1 t_1)(\alpha) \alpha_2 t_2 + \alpha \bigl( \alpha_1 t_1 \graft \alpha_2 t_2 - \alpha_2 t_2 \graft \alpha_1t_1 + [\alpha_1t_1,\alpha_2t_2]   \bigr)\\
		=& \rho(\alpha_1 t_1)(\alpha)\alpha_2 t_2 + \alpha \llbracket \alpha_1 t_1, \alpha_2 t_2 \rrbracket.
	\end{align*}
\end{proof}

\begin{corollary}
	The Lie algebra $(\mathcal{APT},\llbracket \cdot, \cdot \rrbracket)$ is a post-Lie-Rinehart algebra over $S(\mathcal{PA})$ with anchor map $\rho$ and connection $\graft$.
\end{corollary}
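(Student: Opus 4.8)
The plan is to invoke the standard dictionary between post-Lie algebras and Lie-Rinehart algebras carrying a flat connection with constant torsion, now over the ring $S(\mathcal{PA})$ rather than over a field. By the preceding Lemma, $(\mathcal{APT},\llbracket\cdot,\cdot\rrbracket)$ is already a Lie-Rinehart algebra over $S(\mathcal{PA})$ with anchor map $\rho$, so two things remain to be checked: first, that $\graft$ is a connection in the sense of the Definition, i.e.\ that $\alpha_1t_1\mapsto(\alpha_2t_2\mapsto\alpha_1t_1\graft\alpha_2t_2)$ is $S(\mathcal{PA})$-linear in the left slot and satisfies the Leibniz rule in the right slot; and second, that this connection is flat and has constant torsion.

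For the connection property, $S(\mathcal{PA})$-linearity in the left argument is immediate from the defining formula $\alpha_1t_1\graft\alpha_2t_2=\alpha_1\rho(t_1)(\alpha_2)t_2+\alpha_1\alpha_2(t_1\graft t_2)$: replacing $\alpha_1$ by $\alpha\alpha_1$ for $\alpha\in S(\mathcal{PA})$ simply rescales each term by $\alpha$. The Leibniz identity
\[
\alpha_1t_1\graft(\alpha\alpha_2t_2)=\rho(\alpha_1t_1)(\alpha)\,\alpha_2t_2+\alpha\,(\alpha_1t_1\graft\alpha_2t_2)
\]
is precisely the expansion of the term $\alpha_1t_1\graft\alpha\alpha_2t_2$ already carried out inside the proof of the previous Lemma; alternatively it follows in one line from the defining formula together with the Lemma of this subsection asserting that $\rho(t_1)$ is a derivation of $S(\mathcal{PA})$.

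For flatness and constant torsion, I would compute the torsion and curvature of $\graft$ relative to $\llbracket\cdot,\cdot\rrbracket$. Since $\llbracket X,Y\rrbracket=X\graft Y-Y\graft X+[X,Y]$ by definition, the torsion is $\mathcal{T}(X,Y)=X\graft Y-Y\graft X-\llbracket X,Y\rrbracket=-[X,Y]$, so for the induced action of $\graft$ on bilinear maps one gets
\[
(\triangledown_X\mathcal{T})(Y,Z)=-\bigl(X\graft[Y,Z]-[X\graft Y,Z]-[Y,X\graft Z]\bigr),
\]
which vanishes by the post-Lie axiom \eqref{eq::PostLie1}, so the torsion is constant. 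For the curvature, substituting $\llbracket X,Y\rrbracket$ gives
\[
\mathcal{R}(X,Y,Z)=X\graft(Y\graft Z)-Y\graft(X\graft Z)-(X\graft Y)\graft Z+(Y\graft X)\graft Z-[X,Y]\graft Z,
\]
which is zero by the post-Lie axiom \eqref{eq::PostLie2}. Hence $\graft$ is a flat connection with constant torsion on the Lie-Rinehart algebra $(\mathcal{APT},\llbracket\cdot,\cdot\rrbracket)$, which is exactly the definition of a post-Lie-Rinehart algebra.

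I do not expect a genuine obstacle here: every step is formal, being the familiar translation of the post-Lie identities into statements about curvature and torsion, already recorded in the post-Lie-algebroid subsection. The only point that needs a little care is bookkeeping — namely that $\rho$ and $\graft$ have been extended from single planar trees to all of $Lie(\mathcal{PT})$, so that the identities above hold for arbitrary $X,Y,Z\in\mathcal{APT}$; but the consistency of these extensions is exactly the content of the Proposition establishing that $(\mathcal{APT},[\cdot,\cdot],\graft)$ is a post-Lie algebra together with the two preceding Lemmas, so nothing further is required.
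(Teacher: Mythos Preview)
Your proposal is correct and is exactly the argument the paper has in mind: in the paper the corollary is stated without proof, as an immediate consequence of the preceding Proposition and Lemma together with the general dictionary (recalled in Section~\ref{sec::Prelim}) between the post-Lie identities and the conditions of flat curvature and constant torsion. You have simply spelled out those implicit steps, including the verification that $\graft$ is a connection in the module-theoretic sense, which the paper takes for granted.
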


We now want to show that $\mathcal{APT}$ is a tracial Lie-Rinehart algebra. First we recall that the connection $\graft$ on $\mathcal{APT}$ extends to a connection on $\hom_{S(\mathcal{PA})}(\mathcal{APT})$ by
\begin{align*}
	(\alpha_1 t_1 \graft \chi)(\alpha_2 t_2)=\alpha_1 t_1 \graft \chi(\alpha_2 t_2) - \chi(\alpha_1 t_1 \graft \alpha_2 t_2 ),
\end{align*}
where $\alpha_1t_1, \alpha_2t_2 \in \mathcal{APT}$ and $\chi \in \hom_{S(\mathcal{PA})}(\mathcal{APT})$. We furthermore recall the map $\delta: \mathcal{APT} \to \hom_{S(\mathcal{PA})}(\mathcal{APT})$ given by
\begin{align*}
	\delta(\alpha_1 t_1)(\alpha_2 t_2) = \alpha_2 t_2 \graft \alpha_1 t_1,
\end{align*}
and the algebra of elementary $S(\mathcal{PA})$-module endomorphisms $E\ell_{S(\mathcal{PA})}(\mathcal{APT})$, the subalgebra of $\hom_{S(\mathcal{PA})}(\mathcal{APT})$ generated by $\{ \alpha_1t_1 \graft \dots \graft \alpha_nt_n \graft \delta (\alpha t): \alpha t, \alpha_1t_1 \dots \alpha_n t_n \in \mathcal{APT}  \}$. In the pre-Lie case \cite[Proposition 2.10]{FloystadManchonMuntheKaas2020}, the generating set of $E\ell_R(L)$ can be greatly reduced to $\{\delta X: X \in L \}$. Such a result does not hold in the post-Lie case. Denote by
\begin{align*}
	\tilde{\delta}(\alpha_1t_1)(\alpha_2 t_2):=& \alpha_2 t_2 \bgraft \alpha_1t_1\\
	=& \alpha_2 t_2 \graft \alpha_1 t_1 + [\alpha_2t_2,\alpha_1t_1]
\end{align*}
the $\delta$-map induced by the sub-adjacent post-Lie product. 

\begin{lemma} \label{Lemma::ElementarySubAlg}
	Let $L$ be a post-Lie-Rinehart algebra over $R$, with anchor map $\rho$ and connection $\graft$. Then $E\ell_R(L)$ is a subalgebra of the algebra generated by  $\{\delta X,\tilde{\delta}X: X \in L  \}$.
\end{lemma}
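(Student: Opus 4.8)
The plan is to prove the apparently stronger statement that every generator $Y_1 \graft \cdots \graft Y_n \graft \delta X$ of $E\ell_R(L)$ already lies in the algebra $\mathcal{G}$ generated by $\{\delta W, \tilde{\delta}W : W \in L\}$, arguing by induction on $n$. Since $E\ell_R(L)$ is by definition the smallest $R$-module subalgebra of $\hom_R(L)$ containing these generators and $\mathcal{G}$ is such a subalgebra, this gives $E\ell_R(L) \subseteq \mathcal{G}$, which is the claim.

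Before the induction I would record three facts. First, since $L$ is a post-Lie-Rinehart algebra, the triple $(L, [\cdot,\cdot], \graft)$ with $[X,Y] := -\mathcal{T}(X,Y)$ is a post-Lie algebra (the Lie-Rinehart analogue of the post-Lie algebroid proposition, a consequence of flatness and constant torsion), so the identities \eqref{eq::PostLie1}--\eqref{eq::PostLie2} are available. Second, the connection, extended to $\hom_R(L)$ by $(Y \graft \chi)(Z) = Y \graft \chi(Z) - \chi(Y \graft Z)$, is a derivation with respect to composition, $Y \graft (\chi \circ \psi) = (Y \graft \chi) \circ \psi + \chi \circ (Y \graft \psi)$, and satisfies $Y \graft (f\chi) = \rho(Y)(f)\chi + f\,(Y \graft \chi)$ for $f \in R$; both follow by unwinding the defining formula together with the Leibniz rule. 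Third, the operator $Z \mapsto [W,Z]$ equals $\delta W - \tilde{\delta}W$, because $\tilde{\delta}W(Z) - \delta W(Z) = Z \bgraft W - Z \graft W = [Z,W]$; in particular this adjoint operator lies in $\mathcal{G}$.

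The heart of the argument is the pair of identities
\[
  Y \graft \delta X = \delta(Y \graft X) - \delta X \circ \tilde{\delta}Y, \qquad
  Y \graft \tilde{\delta}X = \tilde{\delta}(Y \graft X) - \delta X \circ \tilde{\delta}Y ,
\]
valid for all $X,Y \in L$. To prove the first, I would evaluate $(Y \graft \delta X)(Z) = Y \graft (Z \graft X) - (Y \graft Z) \graft X$, rewrite this via \eqref{eq::PostLie2} applied to $(Y,Z,X)$ so that a summand $[Y,Z] \graft X$ appears, use the third fact to replace $[Y,Z] \graft X$ by $\bigl(\delta X \circ (\delta Y - \tilde{\delta}Y)\bigr)(Z)$, and collect terms: the two occurrences of $\delta X \circ \delta Y$ cancel while $Z \graft (Y \graft X)$ assembles into $\delta(Y \graft X)$. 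The second identity then follows from the first by computing, via \eqref{eq::PostLie1}, that $(Y \graft \tilde{\delta}X)(Z) - (Y \graft \delta X)(Z) = Y \graft [Z,X] - [Y \graft Z, X] = [Z, Y \graft X]$, and rewriting $[Z, Y \graft X] = \bigl(\tilde{\delta}(Y \graft X) - \delta(Y \graft X)\bigr)(Z)$ using the third fact. Both right-hand sides manifestly lie in $\mathcal{G}$.

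The induction is then straightforward: for $n = 0$ the generator is $\delta X \in \mathcal{G}$; for $n \geq 1$, writing the generator as $Y_1 \graft \psi$ with $\psi = Y_2 \graft \cdots \graft Y_n \graft \delta X \in \mathcal{G}$ by hypothesis, the derivation and $R$-Leibniz properties let $Y_1 \graft (-)$ distribute over $\psi$ (an $R$-linear combination of composites of $\delta$'s and $\tilde{\delta}$'s), yielding an $R$-linear combination of composites whose factors are original $\delta W$, $\tilde{\delta}W$ or the expressions $Y_1 \graft \delta W$, $Y_1 \graft \tilde{\delta}W$ covered by the two identities, with scalar coefficients of the form $\rho(Y_1)(f)$; hence $Y_1 \graft \psi \in \mathcal{G}$, completing the proof. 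I expect the only genuine obstacle to be the verification of the two displayed identities --- keeping straight which of $[\cdot,\cdot]$ and $\llbracket\cdot,\cdot\rrbracket$ occurs at each stage and checking the cancellations after invoking the post-Lie axioms; the decisive structural input is the third fact, that the adjoint action of $[\cdot,\cdot]$ coincides with $\delta W - \tilde{\delta}W$, which is precisely what confines the right-hand sides to $\mathcal{G}$ and explains why $\tilde{\delta}$ must be adjoined to the generating set, in contrast with the pre-Lie case where $[\cdot,\cdot]=0$, $\tilde{\delta}=\delta$, and $\{\delta W\}$ alone suffices.
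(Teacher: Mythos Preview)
Your proposal is correct and follows essentially the same route as the paper: both establish the two identities $Y \graft \delta X = \delta(Y \graft X) - \delta X \circ \tilde{\delta}Y$ and $Y \graft \tilde{\delta}X = \tilde{\delta}(Y \graft X) - \delta X \circ \tilde{\delta}Y$ via the post-Lie axioms, then propagate to higher $n$ through the derivation property of $\graft$ on compositions. Your write-up is somewhat more explicit than the paper's (you isolate the derivation property and the fact $\mathrm{ad}_{[\cdot,\cdot]}W = \delta W - \tilde{\delta}W$ up front, and derive the second identity by subtracting the first rather than computing it directly), but these are presentational differences only; the structural inputs and the induction are the same.
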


\begin{proof}
	First we show that elements of the form $Y \graft \delta X$ can be generated by elements of the form $\delta Z,\tilde{\delta}Z$, $Z \in L$:
	\begin{align*}
		(Y \graft \delta X)(Z)=&Y \graft (Z \graft X) - (Y \graft Z) \graft X \\
		=&[Y,Z] \graft X + Z \graft (Y \graft X) - (Z \graft Y) \graft X \\
		=&(\delta(Y \graft X) - \delta X \circ \tilde{\delta}Y )(Z).
	\end{align*}
	We next show that elements of the form $Y \graft \tilde{\delta}X$ can be generated by elements of the form $\delta Z,\tilde{\delta}Z$, $Z \in L$:
	\begin{align*}
		(Y \graft \tilde{\delta}X)(Z)=&Y \graft (Z \graft X + [Z,X])-(Y\graft Z) \graft X - [Y \graft Z,X]\\
		=&[Y,Z]\graft X + Z \graft (Y \graft X) - (Z \graft Y)\graft X + [Z,Y \graft X]\\
		=&(\tilde{\delta}(Y \graft X) - \delta X \circ \tilde{\delta}Y )(Z).
	\end{align*}
	For elements of the form $Y_1\graft Y_2 \graft \delta X$, the statement now follows by Leibniz rule and the above computations:
	\begin{align*}
		Y_1 \graft Y_2 \graft \delta X = Y_1 \graft \delta (Y_2 \graft X) - (Y_1 \graft \delta X) \circ \tilde{\delta}Y_2 - \delta X \circ (Y_1 \graft \tilde{\delta}Y_2),
	\end{align*}
	and we may continue in this way for arbitrary elements in $E\ell_R(L)$.
\end{proof}

Note that the inclusion in the above Lemma is strict, as already $\tilde{\delta}X \notin E\ell_{R}(L)$ for any $X \in L$. In fact, we show that $E\ell_R(L)$ is exactly the subalgebra consisting of elements that has $\delta X$ as leftmost term.

\begin{proposition}
	Let $L$ be a post-Lie-Rinehart algebra over $R$, with anchor map $\rho$ and connection $\graft$, then $E\ell_R(L) =\{\delta X: X \in L \} \circ gen(\{\delta X,\tilde{\delta}X:X \in L  \})$, where $gen(A)$ means the algebra generate by the set $A$ using composition as product. These are all the ways to repeatedly compose elements of the form $\delta X,\tilde{\delta}X$ such that the leftmost term is of the form $\delta X$.
\end{proposition}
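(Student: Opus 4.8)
The plan is to prove the two inclusions separately, after recording two facts already implicit in the proof of Lemma~\ref{Lemma::ElementarySubAlg}. Write $\mathcal{S}$ for the $R$-submodule of $\hom_R(L)$ spanned by all composites $\delta X_0 \circ V_1 \circ \cdots \circ V_m$ with $m \ge 0$, $X_0 \in L$, and each $V_i$ of the form $\delta Z$ or $\tilde{\delta}Z$; this is precisely the right-hand side of the asserted equality (the case $m=0$ being $\delta X_0$ itself), so the goal is $\mathcal{S} = E\ell_R(L)$. The first fact is that the operator $\chi \mapsto Y \graft \chi$ on $\hom_R(L)$ is a derivation for composition, $Y \graft (\chi_1 \circ \chi_2) = (Y \graft \chi_1) \circ \chi_2 + \chi_1 \circ (Y \graft \chi_2)$, which is a one-line check on evaluation. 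The second consists of the identities $Y \graft \delta X = \delta(Y \graft X) - \delta X \circ \tilde{\delta}Y$ and $Y \graft \tilde{\delta}X = \tilde{\delta}(Y \graft X) - \delta X \circ \tilde{\delta}Y$, read directly off the displayed computations in the proof of Lemma~\ref{Lemma::ElementarySubAlg}. Together these show that both $E\ell_R(L)$ and $\mathcal{S}$ are stable under $\chi \mapsto Y \graft \chi$: for $E\ell_R(L)$ because $Y \graft (Y_1 \graft \cdots \graft Y_n \graft \delta X)$ is again a generator and $\graft$ acts as a derivation; for $\mathcal{S}$ because expanding $Y \graft (\delta X_0 \circ V_1 \circ \cdots \circ V_m)$ by the derivation rule and the two identities produces only terms that still begin with a $\delta$-letter.

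For the inclusion $E\ell_R(L) \subseteq \mathcal{S}$, observe that $\mathcal{S}$ is manifestly closed under composition, hence a subalgebra of $\hom_R(L)$, and that it contains every generator $Y_1 \graft \cdots \graft Y_n \graft \delta X$: this follows by induction on $n$, the base case $\delta X \in \mathcal{S}$ being immediate and the inductive step being an instance of the $\graft$-stability of $\mathcal{S}$ just established. Since $E\ell_R(L)$ is the smallest subalgebra containing the generators, $E\ell_R(L) \subseteq \mathcal{S}$.

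The reverse inclusion is the substance of the proposition. I would show that $\delta X_0 \circ w \in E\ell_R(L)$ for every $X_0 \in L$ and every word $w = V_1 \circ \cdots \circ V_m$ in the letters $\delta Z,\tilde{\delta}Z$, by induction on the length $m$. For $m=0$ this is $\delta X_0 \in E\ell_R(L)$. If $V_1 = \delta Y$ then $\delta X_0 \circ w = \delta X_0 \circ (\delta Y \circ V_2 \circ \cdots \circ V_m)$, the second factor lies in $E\ell_R(L)$ by the induction hypothesis, and $E\ell_R(L)$ is a subalgebra. The delicate case is $V_1 = \tilde{\delta}Y$: using the first identity together with the derivation rule one rewrites
\begin{align*}
	\delta X_0 \circ w = \delta(Y \graft X_0)\circ V_2 \circ \cdots \circ V_m - Y \graft\bigl(\delta X_0 \circ V_2 \circ \cdots \circ V_m\bigr) + \delta X_0 \circ \bigl(Y \graft(V_2 \circ \cdots \circ V_m)\bigr).
\end{align*}
The first two summands lie in $E\ell_R(L)$ by the induction hypothesis and its $\graft$-stability. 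For the third, expanding $Y \graft(V_2 \circ \cdots \circ V_m)$ by the derivation rule and the two identities produces a sum of words in the letters $\delta Z,\tilde{\delta}Z$; once $\delta X_0$ is prepended, each such word is either strictly shorter than $w$, hence covered by the induction hypothesis, or of the same length as $w$ but containing an interior $\delta$-letter (in a segment $\cdots\circ\delta Z\circ\tilde{\delta}Y\circ\cdots$ coming from the replacement of some $V_i$ by $-\delta Z\circ\tilde{\delta}Y$), in which case splitting the composition at that $\delta Z$ expresses it as a product of two composites, each of the form $\delta(\,\cdot\,)\circ(\text{word})$ with the word strictly shorter than $w$, hence both in $E\ell_R(L)$ by the induction hypothesis. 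This closes the induction and gives $\mathcal{S} \subseteq E\ell_R(L)$.

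The step I expect to be the main obstacle is precisely the case $V_1 = \tilde{\delta}Y$ above: a plain induction on word length stalls, because expanding $\delta X_0 \circ \tilde{\delta}Y \circ(\cdots)$ can reproduce a word of the same length, and the resolution relies on the observation that in every such non-shortening term a $\delta$-letter has been created in the interior of the word, which lets the composite be factored into strictly shorter pieces that still begin with $\delta$. Combining the two inclusions yields $E\ell_R(L) = \mathcal{S}$, and unwinding the definition of $\mathcal{S}$ is exactly the assertion that $E\ell_R(L)$ consists of all composites of elements $\delta X,\tilde{\delta}X$ whose leftmost term has the form $\delta X$.
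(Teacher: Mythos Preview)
Your proof is correct and follows essentially the same route as the paper's: both directions rest on the identities $Y\graft\delta X=\delta(Y\graft X)-\delta X\circ\tilde{\delta}Y$ and $Y\graft\tilde{\delta}X=\tilde{\delta}(Y\graft X)-\delta X\circ\tilde{\delta}Y$ together with the derivation property of $Y\graft(-)$ on compositions, and the hard inclusion is handled by the same induction on word length, where the length-preserving terms are dispatched by splitting at the newly created interior $\delta$. The only organizational difference is that the paper first reduces to words of the form $\delta X\circ\tilde{\delta}Y_1\circ\cdots\circ\tilde{\delta}Y_k$ (pure $\tilde{\delta}$-tails), whereas you keep general mixed words throughout; your extra care in spelling out the $\graft$-stability of $\mathcal{S}$ and the factoring of the non-shortening terms is a genuine improvement in clarity over the paper's more terse treatment.
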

\begin{proof}
	The inclusion $E\ell_R(L) \subseteq \{\delta X: X \in L \} \circ gen(\{\delta X,\tilde{\delta}X:X \in L  \})$ is clear from the proof of Lemma \ref{Lemma::ElementarySubAlg}. \\
	The inclusion $\{\delta X: X \in L \} \circ gen(\{\delta X,\tilde{\delta}X:X \in L  \}) \subseteq E\ell_R(L)$ follows if we can show that $\delta X \circ \tilde{\delta}Y_1 \circ \dots \circ \tilde{\delta}Y_k \in E\ell_R(L)$, for all $X,Y_1,\dots,Y_k \in L$. The cases of $k=1,2$ can be deduced from the computations in the proof of Lemma \ref{Lemma::ElementarySubAlg}. Suppose that the statement holds for $k=1,\dots,n-1$, and consider
	\begin{align*}
		\delta X \circ \tilde{\delta}Y_1 \circ \dots \circ \tilde{\delta}Y_n=&d(Y_1 \graft X)\circ \tilde{\delta}Y_2 \circ \dots \circ \tilde{\delta}Y_n - (Y_1 \graft \delta X)\circ \tilde{\delta}Y_2 \circ \dots \circ \tilde{\delta}Y_n.
	\end{align*}
	The first term is clearly in $E\ell_R(L)$ by the induction hypothesis, the second term satisfies:
	\begin{align*}
		(Y_1 \graft \delta X)\circ \tilde{\delta}Y_2 \circ \dots \circ \tilde{\delta}Y_n=& Y_1 \graft \Bigl( \delta X \circ \tilde{\delta}Y_2 \circ \dots \circ \tilde{\delta} Y_n \Bigr) - \delta X \circ (Y_1 \graft \tilde{\delta}Y_2)\circ \tilde{\delta}Y_3 \circ \dots \circ \tilde{\delta}Y_n\\ 
		-& \dots - \delta X \circ \tilde{\delta} Y_2 \circ \dots \circ \tilde{\delta} Y_{n-1} \circ (Y_1 \graft \tilde{\delta} Y_n).
	\end{align*}
	Again the first term here is clearly in $E\ell_R(L)$. The remaining terms follow by noting the identity
	\begin{align*}
		X \graft \tilde{\delta} Y = \tilde{\delta}(X \graft Y) - \delta Y \circ \tilde{\delta} X.
	\end{align*}
	The proposition follows by mathematical induction.
\end{proof}

We will in future proofs require different ways of representing the algebra of elementary $S(\mathcal{PA})$-endomorphisms. One of these way will be via the Guin--Oudom extension as follows.

\begin{lemma} \label{Lemma::EellAsGuinOudom}
	Let $L$ be a post-Lie-Rinehart algebra over $R$, with anchor map $\rho$ and connection $\graft$. Let $Z \in L$ and let $Y_1 \graft \dots \graft Y_n \graft \delta X \in E\ell_R(L)$ be arbitrary elements. Then:
	\begin{align} \label{eq::EellAsGuinOudom}
		(Y_1 \graft \dots \graft Y_n \graft \delta X)(Z)= (Y_1 \ast \dots \ast Y_n) Z \graft X,
	\end{align}
	where $(Y_1 \ast \dots \ast Y_n) Z \in U(L)$, $\graft: U(L) \otimes L \to L$ is the Guin--Oudom extension of $\graft: L \otimes L \to L$, and $\ast$ is the Grossman--Larson product.
\end{lemma}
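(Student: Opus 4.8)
The plan is to prove \eqref{eq::EellAsGuinOudom} by induction on $n$, using the recursive definition of the extended connection on $\hom_R(L)$ together with the Guin--Oudom rules for $\graft: U(L)\otimes L\to L$ and the Grossman--Larson formula for a primitive element. First I would observe that the right-hand side is meaningful: a post-Lie-Rinehart algebra is in particular a post-Lie algebra $(L,[\cdot,\cdot],\graft)$ (zero curvature and constant torsion are precisely the two post-Lie axioms), so the Guin--Oudom construction applies, with $U(L)$ the universal enveloping algebra of the first Lie bracket $(L,[\cdot,\cdot])$; thus $Y_1\ast\cdots\ast Y_n\in U(L)$, $(Y_1\ast\cdots\ast Y_n)Z\in U(L)$, and $(Y_1\ast\cdots\ast Y_n)Z\graft X\in L$ as claimed.

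For the base case $n=0$ the left-hand side is $(\delta X)(Z)=Z\graft X$, which equals $1\cdot Z\graft X$ because the empty $\ast$-product is the unit of $U(L)$ and the Guin--Oudom extension restricts to $\graft$ on $L$. For $n=1$, the definition of the extended connection gives $(Y_1\graft\delta X)(Z)=Y_1\graft(Z\graft X)-(Y_1\graft Z)\graft X$, while the Guin--Oudom rule $xA\graft y=x\graft(A\graft y)-(x\graft A)\graft y$ applied with $x=Y_1$, $A=Z$, $y=X$ gives $(Y_1Z)\graft X=Y_1\graft(Z\graft X)-(Y_1\graft Z)\graft X$; the two expressions coincide.

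For the inductive step, assume the statement for $n-1$, put $\chi:=Y_2\graft\cdots\graft Y_n\graft\delta X$ and $B:=Y_2\ast\cdots\ast Y_n\in U(L)$, so that $\chi(W)=BW\graft X$ for every $W\in L$ by the induction hypothesis. Then, by the definition of the extended connection,
\[
(Y_1\graft\chi)(Z)=Y_1\graft\chi(Z)-\chi(Y_1\graft Z)=Y_1\graft(BZ\graft X)-B(Y_1\graft Z)\graft X .
\]
On the other hand, since $Y_1$ is primitive, $\Delta_{\shuffle}(Y_1)=Y_1\otimes 1+1\otimes Y_1$, hence $Y_1\ast B=Y_1B+(Y_1\graft B)$ and $(Y_1\ast B)Z\graft X=(Y_1BZ)\graft X+((Y_1\graft B)Z)\graft X$. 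I would then expand $(Y_1BZ)\graft X$ with the rule $xA\graft y=x\graft(A\graft y)-(x\graft A)\graft y$, and rewrite $Y_1\graft(BZ)=(Y_1\graft B)Z+B(Y_1\graft Z)$ using $A\graft CD=(A_{(1)}\graft C)(A_{(2)}\graft D)$ (again by primitivity of $Y_1$); the term $((Y_1\graft B)Z)\graft X$ then cancels and one is left with $Y_1\graft(BZ\graft X)-B(Y_1\graft Z)\graft X$, matching the previous display. Induction closes the proof.

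I expect the only genuine subtlety to be bookkeeping: the symbol $\graft$ denotes three different but compatible products here — the connection on $L$, its extension to $\hom_R(L)$, and the Guin--Oudom extension on $U(L)\otimes L$ — and one must maintain the right-nested parenthesization of $Y_1\graft\cdots\graft Y_n\graft\delta X$ throughout; the substantive inputs are just the three Guin--Oudom rules together with the fact, part of the uniqueness in that construction, that the extension restricts to $\graft$ on $L$.
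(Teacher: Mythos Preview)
Your proof is correct and follows essentially the same approach as the paper: induction on $n$, using the definition of the extended connection on $\hom_R(L)$ for the left-hand side and the Guin--Oudom rule $xA\graft y=x\graft(A\graft y)-(x\graft A)\graft y$ together with the primitivity of $Y_1$ (so that $Y_1\ast B=Y_1B+Y_1\graft B$ and $Y_1\graft(BZ)=(Y_1\graft B)Z+B(Y_1\graft Z)$) for the right-hand side. The only cosmetic difference is that the paper takes $n=1,2$ as base cases whereas you take $n=0,1$; your inductive step is the same computation as the paper's, just with the intermediate cancellation spelled out in more detail.
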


\begin{proof}
	Recall that the Guin--Oudom extension of $\graft$ is given by
	\begin{align*}
		x_1\dots x_k \graft y = x_1 \graft (x_2 \dots x_k \graft y) - \sum_{i=2}^k x_2 \dots x_{i-1}(x_1 \graft x_i) x_{i+1} \dots x_k \graft y.
	\end{align*}
	First we show \eqref{eq::EellAsGuinOudom} for $n=1,2$:
	\begin{align*}
		(Y_1 \graft \delta X)(Z)=&Y_1 \graft (Z\graft X) - (Y_1 \graft Z) \graft X \\
		=& Y_1Z \graft X, \\
		(Y_1 \graft Y_2 \graft \delta X)(Z) =& Y_1 \graft (Y_2 \graft \delta X )(Z) - (Y_2 \graft \delta X)(Y_1 \graft Z)\\
		=&Y_1 \graft (Y_2 \graft (Z \graft X) - (Y_2 \graft Z)\graft X ) - Y_2 \graft ((Y_1 \graft Z) \graft X) + (Y_2 \graft (Y_1 \graft Z)) \graft X \\
		=&Y_1 \graft (Y_2Z \graft X ) - Y_2(Y_1 \graft Z) \graft X \\
		=&Y_1Y_2Z \graft X + (Y_1 \graft Y_2)Z \graft X\\
		=&(Y_1 \ast Y_2)Z \graft X.
	\end{align*}
	Now suppose that \eqref{eq::EellAsGuinOudom} holds for all $n=1,\dots,k-1$, then:
	\begin{align*}
		(Y_1 \graft \dots \graft Y_k \graft \delta X)(Z)=&Y_1 \graft (Y_2 \graft \dots \graft Y_k \graft \delta X)(Z) - (Y_2 \graft \dots \graft Y_k \graft \delta X)(Y_1 \graft Z) \\
		=&Y_1 \graft( (Y_2 \ast \dots \ast Y_k)Z \graft X ) - (Y_2 \ast \dots \ast Y_k)(Y_1 \graft Z) \graft X \\
		=&Y_1(Y_2 \ast \dots \ast Y_k)Z \graft X + (Y_1 \graft (Y_2 \ast \dots \ast Y_k))Z \graft X \\
		=&(Y_1 \ast \dots \ast Y_k)Z \graft X.
	\end{align*}
	The statement follows by mathematical induction.
\end{proof}

We will further show that $E\ell_{S(\mathcal{PA})}(\mathcal{APT})$ can be represented by \textit{marked aromatic planar trees}. A marked aromatic planar tree is a triple $(v,n,t)$ where $t\in \mathcal{APT}$ is an aromatic planar tree, $v$ is a vertex in $y$ and $n$ is an index in $\{1,\dots, |v|+1\}$ where $|v|$ is the number of edges outgoing from $v$. The vector space spanned by marked aromatic planar trees is denoted $\mathcal{MAPT}$. The subspace spanned by $(v,n,t),$ $t \in \mathcal{PT}$ are called marked planar trees and denoted $\mathcal{MPT}$. We similarly define marked planar aromas, $\mathcal{MPA}$, as triples $(v,n,\alpha)$ where $\alpha \in \mathcal{PA}$. There is an injective map $\mathcal{MAPT} \to \hom(\mathcal{APT})$ given by
\begin{align*}
	(v,n,t) \to (\alpha_1 t_1 \to \alpha_1 t_1 \graft_{v,n} t ),
\end{align*}
where $\alpha_1 t_1 \graft_{v,n} t$ is grafting the root of $t_1$ onto the vertex $v$ in $t$ in the $n$:th position with respect to the planar embedding of the edges outgoing from $v$. As an example:
\begin{align*}
	\forestRF\forestSF \graft_{v,2} \forestTF=\forestUF\forestVF,
\end{align*}
where we emphasize that the grafting onto the vertex $v$ is not left grafting. Indeed it is evident by Lemma \ref{Lemma::EellAsGuinOudom} that $E\ell_{S(\mathcal{PA})}(\mathcal{APT})$ will contain maps that grafts in all planar positions, as the $Y_1,\dots,Y_n$ are to the left of $Z$ in the grafting in \eqref{eq::EellAsGuinOudom}. Similarly there is a map $\mathcal{MPA} \to \hom(\mathcal{PT},\mathcal{PA})$ given by
\begin{align*}
	(v,n,\alpha) \to (t \to t \graft_{v,n} \alpha).
\end{align*}
Composition of maps then induces the product:
\begin{align*}
	(v_1,n_1,t_1) \circ (v_2,n_2,t_2)=(v_2,n_2,t_2 \graft_{v_1,n_1} t_1)
\end{align*}
on $\mathcal{MPT}$. Now consider the linear map $\tau: \mathcal{MPT} \to \mathcal{PA}$, which maps a marked planar tree $(v,n,t)$ to a planar aroma by grafting the root of $t$ onto the vertex $v$ in the $n$:th position with respect to the planar embedding. This also naturally generalizes to a map $\tau: \mathcal{MAPT} \to S(\mathcal{PA})$. For example:
\begin{align*}
	\tau(v,3,\forestWF\forestXF)=& \forestYF \forestAG, \\
	\tau(v,2,\forestBG \forestCG)=& \forestDG.
\end{align*}

We can now generalize the representation of aromas as marked trees \cite[Lemma 3.3]{FloystadManchonMuntheKaas2020} to the post-Lie setting.

\begin{lemma} \label{Lemma::PAisMPTQuotient}
	The vector space $\mathcal{PA}$ can be naturally identified with the quotient $\mathcal{MPT}/[\mathcal{MPT},\mathcal{MPT}]$, where $[\mathcal{MPT},\mathcal{MPT}]$ is the vector space spanned by the commutators in $\mathcal{MPT}$, so that the map $\tau$ is the natural projection from $\mathcal{MPT}$ onto $\mathcal{MPT}/[\mathcal{MPT},\mathcal{MPT}]$.
\end{lemma}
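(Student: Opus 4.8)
The plan is to adapt the proof of \cite[Lemma 3.3]{FloystadManchonMuntheKaas2020} to the planar setting, the new ingredient being careful bookkeeping of the planar insertion slots. Write $\pi$ for the canonical projection $\mathcal{MPT}\to\mathcal{MPT}/[\mathcal{MPT},\mathcal{MPT}]$. Since $[\mathcal{MPT},\mathcal{MPT}]$ is spanned by the commutators $X\circ Y-Y\circ X$, I would first check that $\tau$ annihilates every such commutator, i.e. $\tau(X\circ Y)=\tau(Y\circ X)$ for all marked planar trees $X=(v_1,n_1,t_1)$ and $Y=(v_2,n_2,t_2)$. Unwinding the definitions, $\tau(X\circ Y)$ is the planar aroma on the disjoint vertex sets of $t_1$ and $t_2$ whose edges are those of $t_1$, those of $t_2$, an edge from $v_1$ to the root of $t_2$ placed in the $n_1$-th slot at $v_1$, and an edge from $v_2$ to the root of $t_1$ placed in the $n_2$-th slot at $v_2$; since $v_1$ and $v_2$ lie in different trees the two insertions do not interfere, so $\tau(Y\circ X)$ produces the very same planar graph. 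Hence $\tau$ descends to a linear surjection $\bar\tau:\mathcal{MPT}/[\mathcal{MPT},\mathcal{MPT}]\to\mathcal{PA}$ with $\tau=\bar\tau\circ\pi$, and everything reduces to proving that $\bar\tau$ is injective.

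For injectivity I would fix a basis element $A\in PA$ and analyse its fibre. A marked planar tree $(v,n,t)$ lies in $\tau^{-1}(A)$ precisely when the edge $v\to\operatorname{root}(t)$ one adds under $\tau$ is an edge of the central cycle of $A$ (which it must be, since the path in $t$ from $\operatorname{root}(t)$ down to $v$ together with this new edge forms a cycle) and deleting it from $A$ recovers $(v,n,t)$; conversely, deleting any cycle edge of $A$ produces such a marked planar tree. So, labelling the cycle $c_0\to c_1\to\cdots\to c_{m-1}\to c_0$ and writing $e_i$ for the edge $c_i\to c_{i+1}$ (indices mod $m$), the fibre is $\tau^{-1}(A)=\{M_0,\dots,M_{m-1}\}$, where $M_i$ is obtained by deleting $e_i$. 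The heart of the argument is the claim $M_i\equiv M_{i-1}\pmod{[\mathcal{MPT},\mathcal{MPT}]}$ for every $i$ (vacuous when $m=1$). To prove it, cut $A$ along both $e_{i-1}$ and $e_i$; this yields two marked planar trees, the component of $c_i$ being $X=(c_i,n_i,S_i)$ where $S_i$ is the branch hanging off $c_i$ rooted at $c_i$ and $n_i$ is the slot $e_i$ occupied at $c_i$, and the other component being $Y=(c_{i-1},n_{i-1},S)$, rooted at $c_{i+1}$, running round to $c_{i-1}$ and marked at $c_{i-1}$ in the slot $e_{i-1}$ occupied. Unwinding the definitions of $\circ$, of the grafts $\graft_{v,n}$, and of the deletions then gives $X\circ Y=M_{i-1}$ and $Y\circ X=M_i$, whence $M_i-M_{i-1}=Y\circ X-X\circ Y\in[\mathcal{MPT},\mathcal{MPT}]$.

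Chaining these congruences around the cycle gives $\pi(M_0)=\cdots=\pi(M_{m-1})$; choosing one representative $r_A\in\tau^{-1}(A)$ for each $A\in PA$, it follows that every basis element $b$ of $\mathcal{MPT}$ has $\pi(b)=\pi(r_{\tau(b)})$, so $\{\pi(r_A):A\in PA\}$ spans $\mathcal{MPT}/[\mathcal{MPT},\mathcal{MPT}]$. Since $\bar\tau(\pi(r_A))=A$ runs over the basis $PA$ of $\mathcal{PA}$, this spanning set is linearly independent and $\bar\tau$ is an isomorphism; transporting $\pi$ through $\bar\tau$ then identifies $\mathcal{PA}$ with $\mathcal{MPT}/[\mathcal{MPT},\mathcal{MPT}]$ in such a way that $\tau$ becomes the natural projection. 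If desired, the whole argument can be run degree by degree in the number of vertices, since $\tau$ and $\circ$ are homogeneous.

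I expect the two identities $\tau(X\circ Y)=\tau(Y\circ X)$ and $X\circ Y=M_{i-1}$, $Y\circ X=M_i$ to be routine once drawn out on pictures. The one place requiring genuine care — and the only substantive difference from the non-planar, pre-Lie argument of \cite{FloystadManchonMuntheKaas2020} — is verifying that the planar slot indices are carried correctly through the operations $\graft_{v,n}$; this is where the main obstacle lies.
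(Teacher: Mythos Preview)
Your proposal is correct and is exactly the approach the paper takes: the paper's proof reads in its entirety ``The proof of \cite[Lemma 3.3]{FloystadManchonMuntheKaas2020} can be copied essentially word for word,'' and what you have written is precisely that adaptation, with the extra bookkeeping of the planar slot indices $n_i$ handled carefully. Your identification of the fibre $\tau^{-1}(A)$ with the cycle edges and the decomposition $M_{i-1}=X\circ Y$, $M_i=Y\circ X$ via a two-edge cut is the heart of the argument in the pre-Lie case as well.
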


\begin{proof}
	The proof of \cite[Lemma 3.3]{FloystadManchonMuntheKaas2020} can be copied essentially word for word. 
\end{proof}

Now consider the subspace $\Upsilon \subseteq \hom(\mathcal{PT},\mathcal{PA})$ of maps $\Upsilon =  \{ t \to (\omega t \graft \alpha ): \omega \in \mathcal{OF},\alpha \in \mathcal{PA}   \}$. One then easily sees that $\Upsilon$ is a subspace of $\mathcal{MPA}$. Let $\Psi= \{\alpha t : \alpha \in \Upsilon, t \in \mathcal{PT}  \}$, which is a subspace of $\mathcal{MAPT}$ where the mark is on the aromas. We now have compositions:
\begin{align}
	\mathcal{MPT} \circ \mathcal{MPT} \to \mathcal{MPT}:& (v_1,n_1,t_1) \circ (v_2,n_2,t_2)=(v_2,n_2,t_2 \graft_{v_1,n_1} t_1), \label{eq::MPATComp1} \\
	\Psi \circ \mathcal{MPT}:& (v_1,n_1,\alpha)t_1 \circ (v_2,n_2,t_2)=(v_2,n_2,t_2 \graft_{v_1,n_1} \alpha)t_1, \label{eq::MPATComp2} \\
	\mathcal{MPT} \circ \Psi \to \Psi:& (v_1,n_1,t_1) \circ (v_2,n_2,\alpha_2)t_2 = (v_2,n_2,\alpha_2) t_2 \graft_{v_1,n_1} t_1, \label{eq::MPATComp3}\\
	\Psi \circ \Psi \to S(\mathcal{PA}) \otimes \Psi:& (v_1,n_1,\alpha_1)t_1 \circ (v_2,n_2,\alpha_2)t_2 = t_2 \graft_{v_1,n_1} \alpha_1 \otimes (v_2,n_2,\alpha_2)t_1. \label{eq::MPATComp4}
\end{align}

\begin{lemma}
	The $S(\mathcal{PA})$-submodule
	\begin{align*}
		\Omega = (S(\mathcal{PA}) \otimes \mathcal{MPT} ) \bigoplus ( S(\mathcal{PA}) \otimes \Psi ) \bigoplus (S(\mathcal{PA}) \otimes (\Psi \circ \mathcal{MPT}) )
	\end{align*}
	of $\mathcal{MAPT}$ is a subalgebra and the decomposition is free.
\end{lemma}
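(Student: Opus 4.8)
\emph{Proof strategy.} The product on $\Omega$ is induced by composition of endomorphisms of $\mathcal{APT}$ through the injection $\mathcal{MAPT}\to\hom(\mathcal{APT})$, so it is automatically associative, and the four displayed maps \eqref{eq::MPATComp1}--\eqref{eq::MPATComp4} describe this composition on the relevant pairs of summands. The module action of $S(\mathcal{PA})$ only multiplies together the ``free'' aroma factors, and the extra aroma factor produced in \eqref{eq::MPATComp4} is simply absorbed into $S(\mathcal{PA})$; hence \eqref{eq::MPATComp1}--\eqref{eq::MPATComp4} extend uniquely to $S(\mathcal{PA})$-bilinear operations, and to prove that $\Omega$ is a subalgebra it is enough to verify closure for the nine ordered pairs of summands, using associativity to reduce each composite to repeated composition of a single $\mathcal{MPT}$- or $\Psi$-atom at a time.

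The closure check is then pure bookkeeping with \eqref{eq::MPATComp1}--\eqref{eq::MPATComp4}: $\mathcal{MPT}\circ\mathcal{MPT}\subseteq\mathcal{MPT}$, $\mathcal{MPT}\circ\Psi\subseteq\Psi$, $\Psi\circ\Psi\subseteq S(\mathcal{PA})\otimes\Psi$, and $\Psi\circ\mathcal{MPT}$ is the third summand by definition. For the composites involving the third summand one re-associates and peels off an atom, e.g. $(\Psi\circ\mathcal{MPT})\circ\mathcal{MPT}=\Psi\circ(\mathcal{MPT}\circ\mathcal{MPT})\subseteq\Psi\circ\mathcal{MPT}$, $\mathcal{MPT}\circ(\Psi\circ\mathcal{MPT})=(\mathcal{MPT}\circ\Psi)\circ\mathcal{MPT}\subseteq\Psi\circ\mathcal{MPT}$, $(\Psi\circ\mathcal{MPT})\circ\Psi=\Psi\circ(\mathcal{MPT}\circ\Psi)\subseteq\Psi\circ\Psi\subseteq S(\mathcal{PA})\otimes\Psi$, $\Psi\circ(\Psi\circ\mathcal{MPT})=(\Psi\circ\Psi)\circ\mathcal{MPT}\subseteq S(\mathcal{PA})\otimes(\Psi\circ\mathcal{MPT})$, and $(\Psi\circ\mathcal{MPT})\circ(\Psi\circ\mathcal{MPT})$ reduces to the previous cases. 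Every result lies in $\Omega$.

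For the freeness statement there are two points. First, each of $S(\mathcal{PA})\otimes\mathcal{MPT}$, $S(\mathcal{PA})\otimes\Psi$ and $S(\mathcal{PA})\otimes(\Psi\circ\mathcal{MPT})$ is a free $S(\mathcal{PA})$-module, being $S(\mathcal{PA})$ tensored over the ground field with a vector space, and one writes down explicit bases (the marked planar trees; a basis of $\Psi$; the composites \eqref{eq::MPATComp2} indexed by a basis of $\Psi$ and a basis of $\mathcal{MPT}$). Second, the canonical map from the external direct sum of these three into $\mathcal{MAPT}$ is injective, which I would prove by tracking the combinatorial position of the marked vertex in the underlying aromatic planar tree: on $S(\mathcal{PA})\otimes\mathcal{MPT}$ it sits on the planar-tree factor; on $S(\mathcal{PA})\otimes\Psi$ it sits in the central cycle of an aroma factor; and on $S(\mathcal{PA})\otimes(\Psi\circ\mathcal{MPT})$, reading off \eqref{eq::MPATComp2}, it sits on an aroma factor but strictly outside the central cycle (it is a vertex of the grafted-in tree $t_2$). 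These three alternatives are mutually exclusive on the canonical basis of $\mathcal{MAPT}$, the map $\mathcal{MAPT}\to\hom(\mathcal{APT})$ and the inclusion $\Upsilon\subseteq\mathcal{MPA}$ are injective, and this picture is exactly the planar, post-Lie analogue of the pre-Lie decomposition of $E\ell_{S(\mathcal{A})}(\mathcal{AT})$ in \cite{FloystadManchonMuntheKaas2020}. It follows that the sum is direct and $\Omega$ is a free $S(\mathcal{PA})$-module with basis the union of the three.

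The main obstacle is the combinatorial identification used in the last step: one has to know that $\Upsilon$, hence $\Psi$, is \emph{exactly} the span of the marked planar aromas whose mark lies in the central cycle (with no hidden linear relations), and that the composites \eqref{eq::MPATComp2} span precisely the marked aromatic planar trees with a non-cyclic mark. This is obtained by combining Lemma \ref{Lemma::PAisMPTQuotient} (the planar analogue of the identification of aromas with marked trees modulo commutators) with the Guin--Oudom/Grossman--Larson description of $E\ell$ from Lemma \ref{Lemma::EellAsGuinOudom}; once this dictionary is set up the direct-sum claim is immediate, whereas the subalgebra part is routine given associativity and $S(\mathcal{PA})$-bilinearity.
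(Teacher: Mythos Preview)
Your approach is essentially the same as the paper's. The paper disposes of the subalgebra claim in one line (``clear by the compositions \eqref{eq::MPATComp1}--\eqref{eq::MPATComp4}''), while you spell out the nine cases explicitly; and for freeness the paper uses exactly your mark-position trichotomy (mark on the tree, mark on the central cycle of an aroma, mark on an aroma but off the cycle), though stated more tersely and without invoking Lemmas \ref{Lemma::PAisMPTQuotient} or \ref{Lemma::EellAsGuinOudom}. Your extra caution about whether $\Psi$ really coincides with the ``mark on the cycle'' locus is reasonable, but the paper simply asserts that an element of $\Psi$ has at least one term with its mark on the central cycle and proceeds from there.
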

\begin{proof}
	It is clear by the compositions $\eqref{eq::MPATComp1}-\eqref{eq::MPATComp4}$ that $\Omega$ is closed under composition, and hence is a subalgebra. To show the freeness of the decomposition, note that an element in $S(\mathcal{PA}) \otimes \mathcal{MPT}$ has its mark on a tree and hence cannot be a linear combination of the other two terms. An element of $\Psi$ has at least one term with a mark on the central cycle of an aroma, and hence also cannot be a linear combination of the other two terms.
\end{proof}

This now lets us represent $E\ell_{S(\mathcal{PA})}(\mathcal{APT})$ as marked aromatic planar trees.

\begin{proposition} \label{prop::OmegaIsEell}
	The algebra $E\ell_{S(\mathcal{PA})}(\mathcal{APT})$ coincides with the algebra $\Omega$.
\end{proposition}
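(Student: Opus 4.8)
The plan is to prove the two inclusions $E\ell_{S(\mathcal{PA})}(\mathcal{APT}) \subseteq \Omega$ and $\Omega \subseteq E\ell_{S(\mathcal{PA})}(\mathcal{APT})$ separately, following the pattern of the pre-Lie argument in \cite{FloystadManchonMuntheKaas2020} while using Lemma \ref{Lemma::EellAsGuinOudom} to control the extra planar data. For the inclusion $E\ell_{S(\mathcal{PA})}(\mathcal{APT}) \subseteq \Omega$, recall that $\Omega$ is a subalgebra of $\hom_{S(\mathcal{PA})}(\mathcal{APT})$ by the preceding lemma and is visibly an $S(\mathcal{PA})$-submodule, so it is enough to show that every generator $\alpha_1 t_1 \graft \cdots \graft \alpha_n t_n \graft \delta(\alpha t)$ of $E\ell_{S(\mathcal{PA})}(\mathcal{APT})$ lies in $\Omega$. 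By Lemma \ref{Lemma::EellAsGuinOudom} such a generator is the map $\beta s \mapsto (\alpha_1 t_1 \ast \cdots \ast \alpha_n t_n)\,\beta s \graft \alpha t$, and I would expand the Guin--Oudom grafting of $(\alpha_1 t_1 \ast \cdots \ast \alpha_n t_n)\,\beta s \in U(\mathcal{APT})$ onto $\alpha t$. In each resulting summand the aroma factors $\alpha,\alpha_1,\ldots,\alpha_n$ only multiply together in $S(\mathcal{PA})$, and the single occurrence of the variable $\beta s$ has its root grafted onto exactly one vertex $v$ in a definite planar slot, where $v$ is either a vertex of the tree part, a vertex on the central cycle of one of the aromas, or a vertex of an aroma away from its central cycle. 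These three cases land respectively in $S(\mathcal{PA}) \otimes \mathcal{MPT}$, $S(\mathcal{PA}) \otimes \Psi$ and $S(\mathcal{PA}) \otimes (\Psi \circ \mathcal{MPT})$, so the generator lies in $\Omega$.

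For the reverse inclusion $\Omega \subseteq E\ell_{S(\mathcal{PA})}(\mathcal{APT})$, since $E\ell_{S(\mathcal{PA})}(\mathcal{APT})$ is a subalgebra and an $S(\mathcal{PA})$-submodule it suffices to show $\mathcal{MPT} \subseteq E\ell_{S(\mathcal{PA})}(\mathcal{APT})$ and $\Psi \subseteq E\ell_{S(\mathcal{PA})}(\mathcal{APT})$; the summand $S(\mathcal{PA}) \otimes (\Psi \circ \mathcal{MPT})$ then follows from closure under the composition \eqref{eq::MPATComp2} and $S(\mathcal{PA})$-linearity. To obtain $\mathcal{MPT}$ I would induct on the number of vertices of $t$, copying the proof of \cite[Lemma 3.4]{FloystadManchonMuntheKaas2020} while also tracking the planar index $n$: the base case is $(v,1,\bullet) = \delta(\bullet)$; if $v$ is not the root, split $t$ at $v$ into the subtree $t_v$ and its complement $t'$, with $w$ the parent of $v$ and $m$ its planar slot, so that $(v,n,t) = (w,m,t') \circ (v,n,t_v)$ by \eqref{eq::MPATComp1} and induction applies; if $v$ is the root, first get the leftmost case from $(v,1,t) = \delta(t) - \sum_{v' \neq v}(v',1,t)$ and the previous step, and then the non-leftmost cases $(v,n,t)$ with $n \geq 2$ from elements $t'' \graft \delta(t)$, whose Guin--Oudom expansion via Lemma \ref{Lemma::EellAsGuinOudom} grafts the argument to the right of $t''$ and hence into every planar slot, by a secondary induction isolating the wanted term from lower-order contributions. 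To obtain $\Psi$ I would use the peeling trick that yields $\mathcal{D} \subseteq E\ell$ in \cite{FloystadManchonMuntheKaas2020} (there $\delta(A)t = \delta(At) - A\,\delta(t)$): expanding $\omega \graft \delta(\alpha' t)$ for $\omega \in \mathcal{OF}$ by Lemma \ref{Lemma::EellAsGuinOudom}, the term in which the argument attaches to the central cycle of an aroma is the desired element of $\Psi$, while every other term lies in $S(\mathcal{PA}) \otimes \mathcal{MPT}$ or $S(\mathcal{PA}) \otimes (\Psi \circ \mathcal{MPT})$ and is therefore already known to be elementary; Lemma \ref{Lemma::PAisMPTQuotient} supplies representatives in $\mathcal{MPT}$ of prescribed planar aromas where these are needed.

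The main obstacle, shared by both directions, is the planar and aromatic bookkeeping in these Guin--Oudom expansions: one must check carefully that the aroma parts of $(\alpha_1 t_1 \ast \cdots \ast \alpha_n t_n)\,\beta s \graft \alpha t$ genuinely only concatenate, that the attachment vertex of the variable is always confined to the three vertex-types above, and that the subspaces $\Upsilon$ and $\Psi$ --- defined via grafting ordered forests of $\mathcal{OF}$ onto planar aromas --- are exactly the classes that appear, which amounts to reconciling the Grossman--Larson product on $U(\mathcal{APT})$ with the forest grafting occurring in the definition of $\Upsilon$. Once this dictionary between $E\ell$-generators and marked aromatic planar trees is established, the rest is a transcription of the pre-Lie proof of the analogous decomposition in \cite{FloystadManchonMuntheKaas2020}, with the single-vertex tree replaced by $\delta(\bullet)$ and the compositions read off from \eqref{eq::MPATComp1}--\eqref{eq::MPATComp4}.
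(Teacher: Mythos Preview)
Your approach is essentially the paper's: the inclusion $E\ell \subseteq \Omega$ via Lemma \ref{Lemma::EellAsGuinOudom}, then $\mathcal{MPT} \subseteq E\ell$ by the same vertex-induction (including the secondary induction on the planar slot at the root), then $\Psi$, then $\Psi \circ \mathcal{MPT}$ by composition. The one place where the paper is noticeably cleaner is the step $\Psi \subseteq E\ell$. You propose to expand $\omega \graft \delta(\alpha' t)$ and sort the resulting terms according to whether the argument attaches to the tree, to a cycle vertex, or to an off-cycle aroma vertex, declaring the last kind ``already known to be elementary'' because it lies in $S(\mathcal{PA})\otimes(\Psi\circ\mathcal{MPT})$. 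That is circular as stated: at this point only $\mathcal{MPT}$ has been shown to lie in $E\ell$, and $\Psi\circ\mathcal{MPT}$ is obtained only \emph{after} $\Psi$ is. The paper sidesteps this with the single identity
\[
(\omega t' \graft \alpha)\,t \;=\; \omega t' \graft (\alpha t) \;-\; \alpha\,(\omega t' \graft t),
\]
so that an arbitrary element of $\Psi$ is an $E\ell$-generator minus an element of $S(\mathcal{PA})\otimes\mathcal{MPT}$, with no third case. The same identity, read the other way, also gives the paper's version of the forward inclusion $E\ell \subseteq \Omega$ as a two-term split (one term in $\Psi$, one in $S(\mathcal{PA})\otimes\mathcal{MPT}$) rather than your three-case analysis; note in particular that $\Psi$, as defined through $\Upsilon$, already contains graftings onto all vertices of $\alpha$, not only the cycle vertices, so the off-cycle case does not need to be separated out.
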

\begin{proof}
	The inclusion $E\ell_{S(\mathcal{PA})}(\mathcal{APT}) \subseteq \Omega$ is clear by Lemma \ref{Lemma::EellAsGuinOudom}. Let $\alpha' t' \to ( (\alpha_1 t_1 \ast \dots \ast \alpha_n t_n)\alpha' t' \graft \alpha t  )$ be an arbitrary element of $E\ell_{S(\mathcal{PA})}(\mathcal{PAT})$, and note:
	\begin{align}
		(\alpha_1 t_1 \ast \dots \ast \alpha_n t_n)\alpha' t' \graft \alpha t = ((\alpha_1 t_1 \ast \dots \ast \alpha_n t_n)\alpha' t' \graft \alpha) t + \alpha( (\alpha_1 t_1 \ast \dots \ast \alpha_n t_n)\alpha' t' \graft t), \label{eq::ElInPsi}
	\end{align}
	The first term is the evaluation of a map in $\Psi$ and the second term is the evaluation of a map in $\mathcal{MPT}$.\\
	The inclusion $\mathcal{MPT} \subseteq E\ell_{S(\mathcal{PA})}(\mathcal{APT})$: If tree $t$ consists only of the vertex $v$, then $(v,1,t)=\delta(t) \in E\ell_{S(\mathcal{PA})}(\mathcal{APT})$. Suppose that $(v_k,n_k,t_k) \in E\ell_{S(\mathcal{PA})}(\mathcal{APT})$ for all $t_k$ with at most $k$ vertices and let $(v,n,t)$ have $k+1$ vertices. If $v$ is not the root of $t$, then $(v,n,t)=(v',n',t')\circ (v,n,t_v) \in E\ell_{S(\mathcal{PA})}(\mathcal{APT})$, where $t_v$ is the subtree of $t$ that has $v$ as a root and $t'$ is the subtree obtained by removing $t_v$ from $t$. If $v$ is the root of $t$, then $(v,1,t)=\delta(v) - \sum_{w \neq v}(w,1,t) \in E\ell_{S(\mathcal{PA})}(\mathcal{APT})$. Lastly suppose that $v$ is the root of $t$ and that $(v,n,t) \in E\ell_{S(\mathcal{PA})}(\mathcal{APT})$ for all $n=1,\dots,q-1$. Let $t_1$ be the leftmost branch attached to $v$ and let $t'$ be the tree obtained by removing $t_1$ from $t$, then $(v,q,t)=\sum_{w \neq v} (v,q-1,t_1 \graft_w t') - t_1 \graft (v,q,t) \in E\ell_{S(\mathcal{PA})}(\mathcal{APT})$. The inclusion $\mathcal{MPT} \subseteq E\ell_{S(\mathcal{PA})}(\mathcal{APT})$ now follows by mathematical induction. \\
	The inclusion $\Psi \subseteq E\ell_{S(\mathcal{PA})}(\mathcal{PAT})$:
	\begin{align*}
		(\omega t' \graft \alpha)t = (\omega t') \graft \alpha t - \alpha (\omega t' \graft t).
	\end{align*}
\end{proof}

\begin{corollary}
	The algebra $E\ell_{S(\mathcal{PA})}(\mathcal{APT})$ is a subalgebra of $\mathcal{MAPT}$.
\end{corollary}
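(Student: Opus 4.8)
The plan is to read the corollary directly off Proposition~\ref{prop::OmegaIsEell} together with the lemma stated just before it. Proposition~\ref{prop::OmegaIsEell} identifies $E\ell_{S(\mathcal{PA})}(\mathcal{APT})$ with the algebra $\Omega$, and that lemma exhibits
\[
  \Omega = \bigl(S(\mathcal{PA}) \otimes \mathcal{MPT}\bigr) \oplus \bigl(S(\mathcal{PA}) \otimes \Psi\bigr) \oplus \bigl(S(\mathcal{PA}) \otimes (\Psi \circ \mathcal{MPT})\bigr)
\]
as an $S(\mathcal{PA})$-submodule of $\mathcal{MAPT}$ that is closed under composition. Chaining the two facts, $E\ell_{S(\mathcal{PA})}(\mathcal{APT})$ is a subspace of $\mathcal{MAPT}$ which is closed under the composition product, and this is exactly the assertion of the corollary. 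So the substance has already been carried out in the proof of Proposition~\ref{prop::OmegaIsEell}; the corollary merely repackages $E\ell_{S(\mathcal{PA})}(\mathcal{APT}) = \Omega \subseteq \mathcal{MAPT}$, the identification being one of algebras because the abstract composition on $\Omega$ coming from \eqref{eq::MPATComp1}--\eqref{eq::MPATComp4} is precisely composition of the corresponding endomorphisms of $\mathcal{APT}$.

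Two small points I would spell out explicitly. First, the sense in which $\mathcal{MAPT}$ carries a product: it is embedded in $\hom(\mathcal{APT})$ through the injective map $(v,n,t) \mapsto (\alpha_1 t_1 \mapsto \alpha_1 t_1 \graft_{v,n} t)$, and ``subalgebra of $\mathcal{MAPT}$'' is to be understood as saying that the composite in $\hom(\mathcal{APT})$ of two elements of $E\ell_{S(\mathcal{PA})}(\mathcal{APT})$ again lies in the image of $\mathcal{MAPT}$ --- note that $\mathcal{MAPT}$ is \emph{not} itself closed under composition of endomorphisms, since composing two marks lying on central cycles produces a symmetric product of aromas, cf.\ \eqref{eq::MPATComp4}. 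Second, that each summand of $\Omega$ genuinely sits inside $\mathcal{MAPT}$: $S(\mathcal{PA}) \otimes \mathcal{MPT}$ consists of marks on a tree, $S(\mathcal{PA}) \otimes \Psi$ of marks on the central cycle of an aroma, and $S(\mathcal{PA}) \otimes (\Psi \circ \mathcal{MPT})$ of marks on an aroma away from its central cycle, all of which are marked aromatic planar trees by the descriptions of $\Psi$ and of the compositions \eqref{eq::MPATComp1}--\eqref{eq::MPATComp4} given before the lemma. With these observations in place the corollary is immediate, and I do not expect any real obstacle.
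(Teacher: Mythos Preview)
Your proposal is correct and matches the paper's approach exactly: the paper states this corollary without proof, treating it as an immediate consequence of Proposition~\ref{prop::OmegaIsEell} (identifying $E\ell_{S(\mathcal{PA})}(\mathcal{APT})$ with $\Omega$) together with the preceding lemma (exhibiting $\Omega$ as a composition-closed $S(\mathcal{PA})$-submodule of $\mathcal{MAPT}$). Your additional remarks clarifying the sense of ``subalgebra of $\mathcal{MAPT}$'' and verifying that each summand of $\Omega$ lies in $\mathcal{MAPT}$ are helpful elaborations, but the core argument is the same one the paper leaves implicit.
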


As $E\ell_{S(\mathcal{PA})}(\mathcal{APT})$ is a subalgebra of $\mathcal{MAPT}$, this in particular means that the map $\tau$ is defined on it. This is indeed what we need to show that the post-Lie-Rinehart algebra $\mathcal{APT}$ is tracial.

\begin{lemma}
	\begin{enumerate}
		\item $\Omega$ is an $\mathcal{APT}$-submodule of $End_{S(\mathcal{PA})}(\mathcal{APT})$.
		\item The map $\tau: \mathcal{MAPT} \to S(\mathcal{PA})$ is an $\mathcal{APT}$-module map.
		\item $\tau$ is a trace map.
	\end{enumerate}
\end{lemma}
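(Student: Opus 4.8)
The plan is to handle the three assertions in order, identifying $\Omega$ with $E\ell_{S(\mathcal{PA})}(\mathcal{APT})$ throughout via Proposition \ref{prop::OmegaIsEell}, and using the free $S(\mathcal{PA})$-module decomposition $\Omega = (S(\mathcal{PA})\otimes\mathcal{MPT})\oplus(S(\mathcal{PA})\otimes\Psi)\oplus(S(\mathcal{PA})\otimes(\Psi\circ\mathcal{MPT}))$ to reduce the module and trace identities to generators. For the first assertion I would show that $E\ell_{S(\mathcal{PA})}(\mathcal{APT})$ is stable under the action $\chi\mapsto \alpha_1 t_1\graft\chi$ of $\mathcal{APT}$ on $\hom_{S(\mathcal{PA})}(\mathcal{APT})$ coming from the extended connection. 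Since $E\ell$ is by definition generated by the elements $\alpha_1 t_1\graft\dots\graft\alpha_n t_n\graft\delta(\alpha t)$, prepending one more grafting factor yields another such generator, and the extended connection obeys $Z\graft(\beta\circ\gamma)=(Z\graft\beta)\circ\gamma+\beta\circ(Z\graft\gamma)$; an induction on the number of composition factors then gives $Z\graft\chi\in E\ell$ for all $\chi\in E\ell$, while stability under the $S(\mathcal{PA})$-action is immediate from $Z\graft(\alpha\chi)=\rho(Z)(\alpha)\chi+\alpha(Z\graft\chi)$. This makes $\Omega$ an $\mathcal{APT}$-submodule of $\mathrm{End}_{S(\mathcal{PA})}(\mathcal{APT})$.

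For the second assertion I must show $\tau(\alpha_1 t_1\graft\chi)=\rho(\alpha_1 t_1)(\tau(\chi))$ for $\chi\in\Omega$, which is meaningful by the first part. Using linearity, the decomposition above, the Leibniz rule for $\graft$ on $\hom$, and the $\rho$-Leibniz rule, it suffices to treat a single marked aromatic planar tree $\chi=(v,n,\beta)$. Unfolding $(\alpha_1 t_1\graft\chi)(\gamma)=\alpha_1 t_1\graft(\gamma\graft_{v,n}\beta)-(\alpha_1 t_1\graft\gamma)\graft_{v,n}\beta$ and expanding the left grafting of $\alpha_1 t_1$ over the vertices of $\gamma\graft_{v,n}\beta$, the summands acting on the $\gamma$-part (and the $\rho(t_1)$-terms hitting $\gamma$'s aromas) cancel against the second term, leaving $\alpha_1 t_1\graft(v,n,\beta)$ equal to the marked-tree representation of grafting $\alpha_1 t_1$ onto every vertex of $\beta$. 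Applying $\tau$, that is, closing the marked vertex into the root, and comparing with $\rho(\alpha_1 t_1)(\tau(v,n,\beta))$, which grafts $t_1$ onto every vertex of the closed-up aroma and acts on its remaining aromas, the two expressions agree termwise because closing the marked vertex and grafting $t_1$ onto a vertex commute, the vertex sets of $\beta$ and of $\tau(v,n,\beta)$ being canonically identified.

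For the third assertion I would reduce $\tau(\chi_1\circ\chi_2)=\tau(\chi_2\circ\chi_1)$ to generators using the composition rules \eqref{eq::MPATComp1}--\eqref{eq::MPATComp4} and $S(\mathcal{PA})$-linearity (with $\rho$-Leibniz wherever a coefficient aroma is grafted onto). For $\chi_i=(v_i,n_i,t_i)\in\mathcal{MPT}$ one has $\chi_1\circ\chi_2=(v_2,n_2,t_2\graft_{v_1,n_1}t_1)$ and $\chi_2\circ\chi_1=(v_1,n_1,t_1\graft_{v_2,n_2}t_2)$, and both map under $\tau$ to the planar aroma whose underlying directed graph is $t_1\sqcup t_2$ with an extra edge from $v_1$ into $\mathrm{root}(t_1)$ and one from $v_2$ into $\mathrm{root}(t_2)$, with planar structure recorded by $n_1,n_2$ --- the same object in either order. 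The remaining cases, where a mark sits on an aroma and the mixed cases $\Psi\circ\mathcal{MPT}$ and $\Psi\circ\Psi$, follow in the same way from \eqref{eq::MPATComp2}--\eqref{eq::MPATComp4}: in each, $\tau(\chi_1\circ\chi_2)$ and $\tau(\chi_2\circ\chi_1)$ describe attaching the same two closing edges to the same forest of planar trees and aromas. Together with the second part this says that $\tau$ restricted to $E\ell_{S(\mathcal{PA})}(\mathcal{APT})$ is a trace map.

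The step I expect to be the main obstacle is the second one: correctly tracking planar insertion positions through the extended-connection expansion and the closing-up operation of $\tau$ --- in particular verifying that left-grafting $t_1$ onto the former root of a tree and then closing that root into a cycle yields the same planar aroma as first closing up and then left-grafting $t_1$ onto that now-cyclic vertex. The "same underlying directed graph" observation in the third part is conceptually transparent but requires the analogous planar-position compatibility check, and the $\Psi\circ\Psi$ case additionally needs care because its composition lands in $S(\mathcal{PA})\otimes\Psi$ rather than in a single summand.
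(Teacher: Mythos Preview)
Your proposal is correct and follows essentially the same path the paper takes, which for parts~2 and~3 simply defers to the pre-Lie argument of \cite[Lemma~3.11]{FloystadManchonMuntheKaas2020}; you are effectively reconstructing that argument with the planar bookkeeping made explicit.

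The one genuine difference is in part~1. You prove stability of $\Omega$ under the $\mathcal{APT}$-action abstractly, via the identification $\Omega=E\ell_{S(\mathcal{PA})}(\mathcal{APT})$ and the Leibniz rule for the extended connection on compositions. The paper instead records the concrete formula
\[
\alpha_1 t_1 \graft (v,n,t)=\sum_{w}(v,\,n+\delta_{v=w},\,\alpha_1 t_1 \graft_w t),
\]
which exhibits the action directly inside $\mathcal{MAPT}$. Your route is cleaner for part~1 in isolation, but the paper's explicit formula is precisely the device that settles the planar-position issue you flag as the main obstacle in part~2: the shift $n\mapsto n+\delta_{v=w}$ is exactly what makes left-grafting onto the marked vertex commute with closing that vertex into the cycle under $\tau$. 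So while both approaches work, deriving this formula first (as the paper does) streamlines the rest of your argument.
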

\begin{proof}
	\begin{enumerate}
		\item For an aromatic tree $\alpha_1 t_1$ and a marked aromatic tree $(v,n,t)$, one obtains
		\begin{align*}
			\alpha_1 t_1 \graft (v,n,t):= \sum_{w} (v,n+\delta_{v=w},\alpha_1t_1 \graft_w t ).
		\end{align*}
		\item The proof of \cite[Lemma 3.11]{FloystadManchonMuntheKaas2020} applies.
		\item The proof of \cite[Lemma 3.11]{FloystadManchonMuntheKaas2020} applies.
	\end{enumerate}
\end{proof}

\begin{corollary}
	$\mathcal{APT}$ is a tracial post-Lie-Rinehart algebra.
\end{corollary}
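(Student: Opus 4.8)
The plan is to assemble the pieces established above into a verification of the definition of a tracial Lie--Rinehart algebra, applied to $\mathcal{APT}$. Recall that $\mathcal{APT}$ has already been shown to be a post-Lie--Rinehart algebra over $S(\mathcal{PA})$ with anchor $\rho$ and connection $\graft$, so all that remains is to exhibit a map $\tau \colon E\ell_{S(\mathcal{PA})}(\mathcal{APT}) \to S(\mathcal{PA})$ satisfying $\tau(\chi_1 \circ \chi_2) = \tau(\chi_2 \circ \chi_1)$ and $\tau(X \graft \chi) = \rho(X)\tau(\chi)$, where $X \graft \chi$ denotes the extended connection on $\hom_{S(\mathcal{PA})}(\mathcal{APT})$.

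The candidate is the restriction to $E\ell_{S(\mathcal{PA})}(\mathcal{APT})$ of the map $\tau \colon \mathcal{MAPT} \to S(\mathcal{PA})$. This restriction is legitimate because Proposition \ref{prop::OmegaIsEell} identifies $E\ell_{S(\mathcal{PA})}(\mathcal{APT})$ with $\Omega$, a subalgebra of $\mathcal{MAPT}$ on which $\tau$ is defined; in particular, for $\chi_1,\chi_2 \in \Omega$ both $\chi_1 \circ \chi_2$ and $\chi_2 \circ \chi_1$ again lie in $\Omega$, so the first identity is meaningful, and it holds by part (3) of the preceding lemma ($\tau$ is a trace map). For the second identity, I would first note that the $\mathcal{APT}$-module structure on $\Omega = E\ell_{S(\mathcal{PA})}(\mathcal{APT})$ from part (1) of the preceding lemma agrees, on $E\ell$, with the extension of the connection to $\hom_{S(\mathcal{PA})}(\mathcal{APT})$, which is immediate upon unravelling $(\alpha_1 t_1 \graft \chi)(\alpha_2 t_2) = \alpha_1 t_1 \graft \chi(\alpha_2 t_2) - \chi(\alpha_1 t_1 \graft \alpha_2 t_2)$ on a marked aromatic planar tree; and that the $\mathcal{APT}$-module structure on the target $S(\mathcal{PA})$ is by construction the one given by the anchor $\rho$. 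Then the second identity is precisely the content of part (2) of the preceding lemma ($\tau$ is an $\mathcal{APT}$-module map). Combined with the post-Lie--Rinehart structure on $\mathcal{APT}$, this shows $\mathcal{APT}$ is a tracial post-Lie--Rinehart algebra.

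There is essentially no obstacle at this stage: the work has all been carried out in the preceding lemma and in Proposition \ref{prop::OmegaIsEell}. The one point deserving a word of care is the identification of the two module structures just mentioned --- the concrete $\graft$-action on marked aromatic planar trees versus the abstract extension of $\graft$ to $\hom_{S(\mathcal{PA})}(\mathcal{APT})$ --- but this is a direct consequence of the definitions and was in effect already invoked in identifying $\Omega$ with $E\ell_{S(\mathcal{PA})}(\mathcal{APT})$.
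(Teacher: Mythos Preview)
Your proposal is correct and follows essentially the same approach as the paper: the corollary is stated without proof in the paper precisely because it is an immediate consequence of the three parts of the preceding lemma together with Proposition~\ref{prop::OmegaIsEell}, and your write-up simply spells this out. Your additional remark about matching the abstract $\graft$-action on $\hom_{S(\mathcal{PA})}(\mathcal{APT})$ with the concrete action on marked aromatic planar trees is a reasonable point of care, though the paper treats it as implicit in the setup.
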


Having established that $\mathcal{APT}$ is a tracial post-Lie-Rinehart algebra, we conclude by showing that it is free.

\begin{theorem}
	Let $(L,R)$ be a tracial post-Lie-Rinehart algebra and $\mathcal{C} \to L$ a map of sets. Then this map extends to a unique homomorphism of tracial post-Lie-Rinehart algebras
	\begin{align*}
		(\zeta,\gamma): (\mathcal{APT}_{\mathcal{C}},S(\mathcal{PA}_{\mathcal{C}})) \to (L,R).
	\end{align*}
	Furthermore there exists a morphism $\beta$ of associative algebras such that the following diagram commutes:
	\begin{center}
		\begin{tikzcd} 
			\mathcal{APT}_{\mathcal{C}} \arrow[r, "\delta"] \arrow[d, "\zeta"] & E\ell_{S(\mathcal{PA}_{\mathcal{C}})}(\mathcal{APT}_{\mathcal{C}}) \arrow[r,"\tau"] \arrow[d, "\beta"] & S(\mathcal{PA}_{\mathcal{C}}) \arrow[d, "\gamma"] \\
			L \arrow[r, "\delta"] &  E\ell_R(L) \arrow[r,"\tau"] & R.
	\end{tikzcd}\end{center}
	It fulfills the following properties for $\phi \in E\ell_{S(\mathcal{PA}_{\mathcal{C}})}(\mathcal{APT}_{\mathcal{C}})$ and $\alpha t \in \mathcal{APT}_{\mathcal{C}}$:
	\begin{align}
		\beta(\phi)(\zeta(\alpha t)  )=& \zeta(\phi(\alpha t) ) \label{eq::Freeness1}\\
		\beta(\alpha t \graft \phi )=& \zeta(\alpha t) \graft \beta(\phi), \label{eq::Freeness2} \\
		\gamma (\alpha t \graft \alpha)=&\zeta(\alpha t) \graft \gamma(\alpha). \label{eq::Freeness3}
	\end{align}
\end{theorem}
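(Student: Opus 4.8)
The plan is to replay the construction of the free tracial pre-Lie-Rinehart algebra from \cite{FloystadManchonMuntheKaas2020}, substituting at each step the planar/post-Lie analogue established above. The one genuinely new ingredient is the starting point: the tree part is now free post-Lie rather than free pre-Lie. Since $Lie(\mathcal{PT}_{\mathcal{C}})$ is the free post-Lie algebra on $\mathcal{C}$ and $(L,[\cdot,\cdot],\graft)$ is a post-Lie algebra, the set map $\mathcal{C} \to L$ extends uniquely to a post-Lie morphism, and we take this to be $\zeta$ restricted to the aroma-free part $Lie(\mathcal{PT}_{\mathcal{C}}) \subseteq \mathcal{APT}_{\mathcal{C}}$.

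Next I would build $\beta$ and $\gamma$. By the proof of Proposition \ref{prop::OmegaIsEell}, every marked planar tree $(v,n,t) \in \mathcal{MPT}$ is obtained from the single-vertex trees $\delta(c)$, $c \in \mathcal{C}$, by applying $\delta$, the connection $\graft$ and composition $\circ$, via the recursions $(v,n,t)=(v',n',t')\circ(v,n,t_v)$, $\delta(t)=\sum_w (w,1,t)$, and the leftward-mark recursion. Define $\beta$ on $\mathcal{MPT}$ by performing exactly these operations in $E\ell_R(L)$, starting from $\delta(\zeta(c))$; the resulting value is independent of the choices involved, because each of these relations is a consequence of the post-Lie-Rinehart axioms, which also hold in $L$. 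Then set $\gamma(\tau(m)) := \tau(\beta(m))$ for $m \in \mathcal{MPT}$; by Lemma \ref{Lemma::PAisMPTQuotient} this is well-defined, since $\tau$ in $L$ kills commutators and $\beta$ is an algebra morphism, and it extends multiplicatively to $S(\mathcal{PA}_{\mathcal{C}})$. With $\gamma$ available I would extend $\zeta$ to $\mathcal{APT}_{\mathcal{C}}=S(\mathcal{PA}_{\mathcal{C}})\otimes Lie(\mathcal{PT}_{\mathcal{C}})$ via the module action, and extend $\beta$ to all of $\Omega = E\ell_{S(\mathcal{PA}_{\mathcal{C}})}(\mathcal{APT}_{\mathcal{C}})$ using the module action on $S(\mathcal{PA})\otimes\mathcal{MPT}$, the identity $(\omega t' \graft \alpha)t = (\omega t')\graft(\alpha t) - \alpha(\omega t' \graft t)$ to reach $\Psi$, and composition to reach $\Psi\circ\mathcal{MPT}$; this is exactly the direct-sum decomposition of $\Omega$ from Proposition \ref{prop::OmegaIsEell}.

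It then remains to check that $(\zeta,\gamma)$ is a morphism of tracial post-Lie-Rinehart algebras and that $\beta$ satisfies \eqref{eq::Freeness1}--\eqref{eq::Freeness3}. Compatibility with $\llbracket\cdot,\cdot\rrbracket$, $\rho$, $\graft$ and $\tau$ reduces, via the tree/aroma decomposition of $\mathcal{APT}$, to the fact that $\zeta$ is a post-Lie morphism on $Lie(\mathcal{PT}_{\mathcal{C}})$ together with the defining relations of $\delta$, $\tau$ and the module action, which all hold by construction in $L$. The central identity is $\beta(\phi)(\zeta(\alpha t))=\zeta(\phi(\alpha t))$: it holds on the generators $\phi=\delta(\zeta(c))$ by the definition of $\delta$ in $L$, and propagates through compositions and $\graft$-extensions using \eqref{eq::Freeness2}, which is itself verified on generators. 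Uniqueness is then immediate: any such morphism is forced on the single-vertex trees, hence on $Lie(\mathcal{PT}_{\mathcal{C}})$ by freeness of the free post-Lie algebra, hence on $\mathcal{MPT}$ by the forced recursions, hence on $\mathcal{PA}$ through $\tau$, hence on all of $\mathcal{APT}_{\mathcal{C}}$ and $E\ell_{S(\mathcal{PA}_{\mathcal{C}})}(\mathcal{APT}_{\mathcal{C}})$.

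The main obstacle is the well-definedness of $\beta$ and $\gamma$: the generation recipe for $\mathcal{MPT}$ and for the $\tau$-preimages of planar aromas involves many choices, and one must confirm that every resulting ambiguity is annihilated by an identity that genuinely holds in the target $(L,R)$ --- this is precisely where Lemma \ref{Lemma::PAisMPTQuotient} and the post-Lie-Rinehart axioms of $L$ are used. Compared with the pre-Lie argument of \cite{FloystadManchonMuntheKaas2020}, the bookkeeping is heavier because the connection no longer grafts only into leftmost interior-cycle positions (cf. the remark after Lemma \ref{Lemma::EellAsGuinOudom}), so the planar-position markings $n$ must be carried through every recursion; the Guin--Oudom description of $E\ell_R(L)$ from Lemma \ref{Lemma::EellAsGuinOudom} is the tool that keeps this manageable.
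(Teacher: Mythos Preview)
Your plan is correct and follows essentially the same route as the paper's own proof: start from the free post-Lie morphism on $Lie(\mathcal{PT}_{\mathcal{C}})$, build $\beta$ on $\mathcal{MPT}$ by the recursions coming from Proposition~\ref{prop::OmegaIsEell}, push through $\tau$ via Lemma~\ref{Lemma::PAisMPTQuotient} to get $\gamma$, extend $\zeta$ by the module action, and finally extend $\beta$ to all of $\Omega$ using the direct-sum decomposition and the identity for $\Psi$. The only cosmetic difference is that the paper fixes one canonical recursion to \emph{define} $\beta$ on $\mathcal{MPT}$ and then proves the algebra-morphism property afterwards, rather than phrasing it as ``independence of choices''; and the paper carries out the verification of the morphism property for the cross-compositions $\Psi\circ\Psi$ and $\mathcal{MPT}\circ\Psi$ explicitly via the Guin--Oudom description, which is exactly the bookkeeping you flag in your last paragraph.
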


\begin{proof}
	We begin by constructing the maps $\zeta,\beta,\gamma$. First consider the subspace $Lie(\mathcal{PT}_{\mathcal{C}})$ of $\mathcal{APT}_{\mathcal{C}}$, we define the restriction $\zeta|_{Lie(\mathcal{PT}_{\mathcal{C}})}$ to be the unique post-Lie algebra morphism induced by $Lie(\mathcal{PT}_{\mathcal{C}})$ being the free post-Lie algebra. Next we show that there is a unique way to define the restriction $\beta|_{\mathcal{MPT}_{\mathcal{C}} }$. If the tree $t$ consists only of the vertex $v$, then it is clear we must define
	\begin{align*}
		\beta(v,1,t)(x):=d(\zeta(t))(x)=x \graft \zeta(t)
	\end{align*}
	for all $x \in L$. Now suppose $\beta$ has been defined for all trees up to $k-1$ vertices, and let $(v,n,t)\in \mathcal{MPT}_{\mathcal{C}}$ have $k$ vertices. If $v$ is not the root of $t$ then $(v,n,t)$ can be decomposed by $(v,n,t)=(v',n',t')\circ (v,n,t_v)$ and $\beta$ has to be defined by
	\begin{align} \label{eq::BetaComposition}
		\beta(v,n,t):=\beta( v',n',t' ) \circ \beta(v,n,t_v),
	\end{align}
	where $t_v$ is the subtree of $t$ that has $v$ as root, $t'$ is the subtree obtained by removing $t_v$ from $t$ and $v',n'$ is the vertex and position that $t_v$ is grafted on in $t$. If $t$ has $k$ vertices and $v$ is the root of $t$, then we must define
	\begin{align*}
		\beta(v,1,t):=\delta(\zeta(t)) - \sum_{w \neq v} \beta(w,1,t).
	\end{align*}
	Finally suppose that $v$ is the root of $t$ and we have defined $\beta(v,n,t)$ for all $n=1,\dots,q-1$. Further let $t_1$ be the leftmost branch attached to $v$, and let $t'$ be the tree obtained by removing $t_1$ from $t$. Then:
	\begin{align*}
		t_1 \graft (v,q-1,t')=&(v,q,t) + \sum_{w \neq v} (v,q-1,t_1 \graft_w t'),
	\end{align*}
	hence we must define:
	\begin{align*}
		\beta(v,q,t):= \zeta(t_1) \graft \beta(v,q-1,t') - \sum_{w \neq v} \beta(v,q-1,t_1 \graft_w t').
	\end{align*}
	This has now uniquely defined $\beta$ on all of $\mathcal{MPT}_{\mathcal{C}}$. Next we show that $\beta|_{\mathcal{MPT}_{\mathcal{C}}}$ is an algebra morphism. Let $(v_1,n_1,t_1),(v_2,n_2,t_2)$ be arbitrary. If $v_2$ is the root of $t_2$, then the morphism property $\beta( (v_1,n_1,t_1) \circ (v_2,n_2,t_2) )= \beta( v_1,n_1,t_1 ) \circ \beta(v_2,n_2,t_2 )$ is by construction \eqref{eq::BetaComposition}. Suppose that $v_2$ is not the root of $t_2$, then $(v_2,n_2,t_2)=(v_3,n_3,t_3) \circ (v_2,n_2,t_2^{v_2})$, where $t_2^{v_2}$ is the subtree of $t_2$ that has $v_2$ as root. Either $v_3$ is the root of $t_3$, or we can continue in this way to decompose $(v_2,n_2,t_2)=(v_m,n_m,t_m^{v_m}) \circ \dots \circ (v_2,n_2,t_2^{v_2})$, where $v_j$ is the root of $t_j^{v_j}$ for all $j=2,\dots,m$. Then by associativity:
	\begin{align*}
		\beta( (v_1,n_1,t_1) \circ (v_2,n_2,t_2) )=& \beta(  (v_1,n_1,t_1) \circ (v_m,n_m,t_m^{v_m}) \circ \dots \circ (v_2,n_2,t_2^{v_2})  ) \\
		=& \beta( (v_1,n_1,t_1) \circ (v_m,n_m,t_m^{v_m}) \circ \dots \circ (v_3,n_3,t_2^{v_3}) ) \circ \beta(v_2,n_2,t_2^{v_2}) \\
		=& \beta( v_1,n_1,t_1 ) \circ \beta(v_m,n_m,t_m^{v_m} ) \circ \dots \circ \beta(v_2,n_2,t_2^{v_2}) \\
		=&\beta(v_1,n_1,t_1) \circ \beta((v_m,n_m,t_m^{v_m}) \circ (v_{m-1},n_{m-1},t_{m-1}^{v_{m1}}) ) \circ \dots \circ \beta(v_2,n_2,t_2^{v_2}) \\
		=&\beta( v_1,n_1,t_1 ) \circ \beta(v_2,n_2,t_2 ).
	\end{align*}
	Showing that \eqref{eq::Freeness2} holds on this subspace is done in exactly the same way. If $v_2$ is the root of $t_2$, then $\beta(t_1 \graft (v_2,n_2,t_2) )= \zeta(t_1) \graft \beta(v_2,n_2,t_2)$ is by construction. If $v_2$ is not the root of $t_2$, then we can decompose $(v_2,n_2,t_2)=(v_m,n_m,t_m^{v_m}) \circ \dots \circ (v_2,n_2,t_2^{v_2})$ as above, and property \eqref{eq::Freeness2} follows immediately from the Leibniz rule. Now since $\beta$ is an algebra morphism, it induces a map
	\begin{align*}
		\overline{\beta}: \mathcal{MPT}_{\mathcal{C}} / [\mathcal{MPT}_{\mathcal{C}}, \mathcal{MPT}_{\mathcal{C}}  ] \to \hom_R(L) / [\hom_R(L),\hom_R(L)  ].
	\end{align*}
	Then, since $\tau$ is a trace, it induces a map
	\begin{align*}
		\overline{\tau}: \hom_R(L)/[\hom_R(L),\hom_R(L)] \to R.
	\end{align*}
	Then we can define the restriction $\gamma|_{\mathcal{PA}_{\mathcal{C}}}$ by $\gamma|_{\mathcal{PA}_{\mathcal{C}}}:= \overline{\tau} \circ \overline{\beta}$. Now since $S(\mathcal{PA}_{\mathcal{C}})$ is a free commutative algebra, there is the unique way to extend $\gamma$ to the full space:
	\begin{align*}
		\gamma(\alpha_1 \dots \alpha_n):= \gamma(\alpha_1) \dots \gamma (\alpha_n).
	\end{align*}
	We now get to extend $\zeta$ to all of $\mathcal{APT}_{\mathcal{C}}$ by:
	\begin{align*}
		\zeta(\alpha t) = \gamma(\alpha)\zeta(t).
	\end{align*}
	It is straightforward to check that $\zeta$ is a post-Lie morphism also on $\mathcal{APT}_{\mathcal{C}}$. We can now define $\beta$ on $\Psi$. First note that an arbitrary element in $\Psi$ can be written as a map \begin{align*}
	t' \to ((\alpha_1t_1 \ast \dots \ast \alpha_n t_n)t' \graft \alpha )t,
	\end{align*} and recall identity \eqref{eq::ElInPsi}. Hence we define:
	 \begin{align}
		\beta( t' \to ((\alpha_1t_1 \ast \dots \ast \alpha_n t_n)t' \graft \alpha )t ):= \zeta(\alpha_1t_1) \graft \dots \graft \zeta(\alpha_n t_n) \graft \beta(\delta (\alpha t)) - \gamma(\alpha)\bigl(\zeta(\alpha_1t_1) \graft \dots \graft \zeta(\alpha_n t_n) \graft \beta(\delta t)  \bigr). \label{eq::BetaAroma}
	\end{align}
	By using Lemma \ref{Lemma::EellAsGuinOudom}, we see that this definition coincides with the map
	\begin{align*}
		\beta( t' \to ((\alpha_1t_1 \ast \dots \ast \alpha_n t_n)t' \graft \alpha )t ) = Z \to \bigl( ( \zeta(\alpha_1t_1) \ast \dots \ast \zeta(\alpha_nt_n))Z \graft \gamma(\alpha)   \bigr) \zeta(t),
	\end{align*}
	for $Z \in L$. We can now extend $\beta$ to $\Psi \circ \mathcal{MPT}$ as an algebra morphism. This defines $\beta$ on all of $E\ell_{S(\mathcal{PA}_{\mathcal{C}})}(\mathcal{APT}_{\mathcal{C}})$. It remains to check that $\beta$ is an algebra morphism for compositions $\Psi \circ \Psi$ and compositions $\mathcal{MPT} \circ \Psi$. Before we proceed with this, we first note that $\beta$ satisfies the equality:
	\begin{align*}
		\beta(Y_1 \graft \dots \graft Y_n \graft \delta X )=\zeta(Y_1) \graft \dots \graft \zeta(Y_n) \graft \delta \zeta(X),
	\end{align*}
	which is by construction for $X$ a tree and then by equation \eqref{eq::BetaAroma} for $X$ an aromatic tree. Property \eqref{eq::Freeness2} will then follow once we show the morphism property. For compositions $\Psi \circ \Psi$, we have by \eqref{eq::MPATComp4}:
	\begin{align*}
		&\beta\Bigl( \bigl( \alpha_1t_1 \graft \dots \graft \alpha_nt_n \graft \delta(\alpha t) - \alpha (\alpha_1 t_1 \graft \dots \graft \alpha_n t_n \graft \delta (t) )  \bigr) \circ \bigl( \alpha'_1 t'_1 \graft \dots \alpha'_m t'_m \graft \delta(\alpha' t') - \alpha' ( \alpha'_1 t'_1 \graft \dots \graft \alpha'_m t'_m \graft \delta(t')  )   \bigr) \Bigr)\\
		=&\beta\Bigl( \bigl( (\alpha_1t_1 \ast \dots \ast \alpha_n t_n)t' \graft \alpha  \bigr) \cdot \bigl( \alpha'_1 t'_1 \graft \dots \alpha'_m t'_m \graft \delta(\alpha' t) - \alpha' ( \alpha'_1 t'_1 \graft \dots \graft \alpha'_m t'_m \graft \delta(t)  )  \bigr)   \Bigr) \\
		=&\gamma\bigl( (\alpha_1t_1 \ast \dots \ast \alpha_n t_n)t' \graft \alpha \bigr) \cdot \Bigl( \zeta(\alpha'_1t'_1)\graft \dots \graft \zeta(\alpha'_m t'_m)\graft \delta \zeta(\alpha' t) - \gamma(\alpha')\bigl( \zeta(\alpha'_1t'_1) \graft \dots \graft \zeta(\alpha'_mt'_m) \graft \delta \zeta(t) \bigr)  \Bigr) \\
		=& \gamma\bigl( (\alpha_1t_1 \ast \dots \ast \alpha_n t_n)t' \graft \alpha \bigr) \cdot \Bigl( Z \to ( (\zeta(\alpha'_1t'_1) \ast \dots \ast \zeta(\alpha'_mt'_m) )Z \graft \gamma(\alpha') )  \Bigr) \zeta(t) \\
		=& \beta( \alpha_1t_1 \graft \dots \graft \alpha_nt_n \graft \delta(\alpha t) - \alpha (\alpha_1 t_1 \graft \dots \graft \alpha_n t_n \graft \delta (t) )) \circ \Bigl( Z \to ( (\zeta(\alpha'_1t'_1) \ast \dots \ast \zeta(\alpha'_mt'_m) )Z \graft \gamma(\alpha')  \Bigr) \zeta(t')\\
		=&\beta( \alpha_1t_1 \graft \dots \graft \alpha_nt_n \graft \delta(\alpha t) - \alpha (\alpha_1 t_1 \graft \dots \graft \alpha_n t_n \graft \delta (t) )) \circ \beta( \alpha'_1 t'_1 \graft \dots \alpha'_m t'_m \graft \delta(\alpha' t') - \alpha' ( \alpha'_1 t'_1 \graft \dots \graft \alpha'_m t'_m \graft \delta(t')  )  ).
	\end{align*}
	For compositions $\mathcal{MPT} \circ \Psi$, we first have to show equation \eqref{eq::Freeness1} for the special case $\phi \in \mathcal{MPT}$. Let $\phi = \delta \alpha t$:
	\begin{align*}
		\beta(\delta \alpha t)(\zeta(\alpha' t'))=&\delta (\zeta(\alpha t)) (\zeta(\alpha' t') )\\
		=& \zeta(\alpha' t') \graft \zeta(\alpha t )\\
		=& \zeta(\alpha' t' \graft \alpha t  ) \\
		=& \zeta( \beta( \delta \alpha t)(\alpha' t')  ).
	\end{align*}
	Now suppose that identity \eqref{eq::Freeness1} holds for $\phi=\alpha_2 t_2 \graft \dots \graft \alpha_n t_n \graft \delta \alpha t$, we show that it holds for $\phi_2=\alpha_1 t_1 \graft \phi$:
	\begin{align*}
		\beta( \alpha t \graft \phi )(\zeta(\alpha' t'))=& \bigl( \zeta(\alpha_1 t_1) \graft \beta(\phi)\bigr)( \zeta(\alpha' t') ) \\
		=& \zeta(\alpha_1 t_1) \graft \beta(\phi)(\zeta(\alpha' t')) - \beta(\phi)(\zeta(\alpha_1 t_1 \graft \alpha' t'))\\
		=&\zeta(\alpha_1 t_1 \graft \phi(\alpha' t') - \phi(\alpha_1 t_1 \graft \alpha' t' ) ) \\
		=& \zeta\bigl( (\alpha_1 t_1 \graft \phi)(\alpha' t') \bigr).
	\end{align*}
	Then since $\beta$ respects compositions in $\mathcal{MPT}$, we get identity \eqref{eq::Freeness1} on $\mathcal{MPT}$. Using this, we can show the morphism property for $\beta$ on compositions $\mathcal{MPT} \circ \Psi$:
	\begin{align*}
		&\beta\bigl( (v_1,n_1,t_1) \circ (t' \to ((\alpha_1t_1 \ast \dots \ast \alpha_n t_n)t' \graft \alpha )t)  \bigr) \\
		=&\beta( t' \to ((\alpha_1t_1 \ast \dots \ast \alpha_n t_n)t' \graft \alpha )(t \graft_{v_1,n_1} t_1)   ) \\
		=& Z \to \bigl( ( \zeta(\alpha_1t_1)\ast \dots \ast \zeta(\alpha_nt_n) )Z \graft \gamma(\alpha)  \bigr) \zeta(t \graft_{v_1,n_1} t_1) \\
		=& Z \to \bigl( ( \zeta(\alpha_1t_1)\ast \dots \ast \zeta(\alpha_nt_n) )Z \graft \gamma(\alpha)  \bigr) \beta(v_1,n_1,t_1)(\zeta(t)) \\
		=&\beta(v_1,n_1,t_1) \circ \beta(t' \to ((\alpha_1t_1 \ast \dots \ast \alpha_n t_n)t' \graft \alpha )t).
	\end{align*}
	Hence $\beta$ is a composition morphism on all of $E\ell_{S(\mathcal{PA}_{\mathcal{C}})}(\mathcal{APT}_{\mathcal{C}})$. As a corollary, we now have identities \eqref{eq::Freeness1},\eqref{eq::Freeness2} everywhere. For identity \eqref{eq::Freeness3}, we can apply exactly the same computations as were done in \cite[Theorem 3.5, part c iii]{FloystadManchonMuntheKaas2020}. The uniqueness of $\zeta|_{Lie(\mathcal{PT}_{\mathcal{C}})}$ follows from $Lie(\mathcal{PT}_{\mathcal{C}})$ being the free post-Lie algebra. This then uniquely determines $\beta|_{\mathcal{MPT}_{\mathcal{C}}}$ by construction, which uniquely defines $\gamma$ by Lemma \ref{Lemma::PAisMPTQuotient}. The uniqueness of $\gamma$ then ensures uniqueness of $\zeta$ on the whole space $\mathcal{APT}_{\mathcal{C}}$.
\end{proof}

\bibliographystyle{acm}
\bibliography{PostLieRinehartReferences}
\end{document}